\documentclass[a4paper]{article}

\usepackage[english]{babel}
\usepackage[utf8x]{inputenc}
\usepackage[T1]{fontenc}
\usepackage{lipsum}
\usepackage{enumerate}
\usepackage{blindtext}
\usepackage{graphicx,amssymb,amsmath,textcomp}
\usepackage{circuitikz}
\usepackage[a4paper,top=2.5cm,bottom=2.5cm,left=3.0cm,right=3.0cm,marginparwidth=1.75cm]{geometry}
\usepackage{tikz,tikz-cd}
\usetikzlibrary{matrix,arrows,positioning,calc}
\usepackage{verbatim}
\usepackage{palatino, mathpazo}
\usepackage[nottoc]{tocbibind}
\usepackage{amsmath,amsthm}
\usepackage{faktor}
\usepackage{xfrac}
\usepackage{graphicx}
\usepackage[colorlinks=true, allcolors=purple]{hyperref}
\usepackage{mathtools}

\newcommand{\vdim}{{\rm vdim \ }}

\newcommand{\spec}{{\rm Spec}\,}

\newcommand{\Spec}{\mathrm{Spec}\,}
\newcommand{\cO}{\mathcal{O}}
\newcommand{\cL}{\mathcal{L}}

\newcommand{\LL}{\mathbb{L}}
\newcommand{\TT}{\mathbb{T}}
\newcommand{\Gm}{\mathbb{G}_m}

\newcommand{\N}{{\mathbb N}}
\newcommand{\Z}{{\mathbb Z}}

\newcommand{\K}{{\mathbb K}}
\newcommand{\tx}{\tilde{x}}
\newcommand{\ty}{\tilde{y}}

\newcommand{\dR}{{d_{dR}}}
\newcommand{\Map}{\mathrm{Map}}
\newcommand{\bfem}[1]{\textbf{\emph{#1}}}

\newtheorem{thmx}{Theorem}

\title{Shifted Contact Structures on \\ Exact Symplectic Fibrations}

\author{M. Fırat Arıkan\footnote{farikan@metu.edu.tr; Dept. of
  Mathematics, METU, 06800, Ankara, T\"urkiye}, \,K. İlker
  Berktav\footnote{berktav@metu.edu.tr; Dept. of Mathematics, METU,
  06800, Ankara, T\"urkiye}, \,Efe
  İzbudak\footnote{efe.izbudak@studium.uni-hamburg.de; Dept. of
Mathematics, Universität Hamburg, 20146, Hamburg, Germany}}

\date{\vspace{-5ex}}

\begin{document}

%%%%%%%%%%%%%%%%%%%% Text italic %%%%%%%%%%%%%%%%%%%%%%%%%%%%
\theoremstyle{plain}
\newtheorem{theorem}{Theorem}[section]
\newtheorem{lemma}[theorem]{Lemma}
\newtheorem{proposition}[theorem]{Proposition}
\newtheorem{corollary}[theorem]{Corollary}

%%%%%%%%%%%%%%%%%%%% Text roman %%%%%%%%%%%%%%%%%%%%%%%%%%%%%
\theoremstyle{definition}
\newtheorem{notations}[theorem]{Notations}
\newtheorem{notation}[theorem]{Notation}
\newtheorem{remark}[theorem]{Remark}
\newtheorem{observation}[theorem]{Observation}
\newtheorem{definition}[theorem]{Definition}
\newtheorem{condition}[theorem]{Condition}
\newtheorem{construction}[theorem]{Construction}
\newtheorem{example}[theorem]{Example}
\newtheorem{claim}[theorem]{Claim}

\let\pf\proof
\let\epf\endproof
\numberwithin{equation}{section}

\maketitle

\begin{abstract}
  Within the framework of classical contact geometry, the first
  author introduced the notion of \textit{contact symplectic
  structure}-- the data of an exact symplectic fibration with contact
  base-- and proved under mild conditions that the total space of
  such a fibration inherits a contact structure compatible with the
  fibration map, thereby establishing the contact Thurston theorem.
  This paper provides a derived contact version of that result by
  incorporating our prior work on the derived symplectic
  Thurston theorem.

  In this paper, we prove, under certain conditions, that if a
  morphism  $\pi: X \rightarrow S$ of derived stacks has a shifted
  exact symplectic fibration structure
  and the target stack $S$ admits a shifted contact structure, then
  one can construct a shifted contact structure on the source stack
  $X$, compatible with $\pi$ in a sense similar to the smooth case.
  Our framework relies on the theory of relative shifted structures;
  hence our result, called the \textit{derived contact Thurston
  theorem}, in fact establishes a relative-to-absolute type construction.

  As an application, we present examples of our relative-to-absolute
  construction formalism in the derived contact setting, including
  conormal stacks, quotient mapping stacks, and affine exact
  symplectic fibrations.

\end{abstract}

\tableofcontents

%===================================
%===================================
%===================================

\section{Introduction and summary}

In classical symplectic geometry, a result attributed to Thurston
(see \cite[Lemma 6.2]{Mcduff}) proposes that if a \bfem{symplectic
fibration} $\pi: E\to B$ \emph{(meaning a locally trivial fiber
    bundle with symplectic fibers whose structure group lies within the
group of symplectomorphisms of the fiber)} with connected symplectic
base satisfies some mild condition, then one can build a symplectic
form on the total space $E$ compatible with the fibration map $\pi$.
Recently, a smooth contact analog of this construction was given by
the first author in \cite{mfa}, where it was shown that the total
space of a compact \bfem{contact symplectic fibration}
\textit{(meaning a symplectic fibration with exact symplectic fibers
and a contact base)} can be endowed with a contact structure
compatible with the fibration map.

More recently, in the context of \textit{derived symplectic geometry}
(DSG), the authors proved in \cite{abi} the derived version of
Thurston's result (called the \textit{derived --symplectic-- Thurston
theorem}), providing a method for constructing a compatible
(absolute) shifted symplectic structure on the source $X$ of a
shifted symplectic fibration \( \pi: X \rightarrow S \) with a
shifted symplectic target. Here the (shifted) symplectic fibration
data consists of a \textit{relative shifted symplectic $2$-form} on
the fibration morphism, explaining why we emphasize the
relative-absolute dictionary. The prior work \cite{abi} therefore
describes a relative-to-absolute construction for the derived
symplectic setting.

Inspired by the above results, one naturally asks if this
relative-to-absolute construction formalism of the prior work
\cite{abi} can also be applied to obtain a derived contact analog of
\cite{mfa}. In this regard, we adapt the terminology and methods of
\cite{abi} to \textit{derived contact geometry} (DCG) and extend the
theory of contact symplectic fibrations to the derived setting,
thereby investigating further consequences and applications.

\paragraph{Results of the paper.}
To present and confirm the results in the general DCG setting,
similar to those in the smooth contact setup, we use or adapt the
ideas and tools from the DSG setting discussed in \cite{abi}.
For instance, an \textit{$n$-shifted (twisted) exact symplectic
fibration structure} (cf. Definition \ref{defn_shifted symp
fibration}) replaces the notion of \textit{$n$-shifted symplectic
fibration structure} from \cite{abi}. %Indeed, one can easily verify
% the following result, which is just the DCG analogue (needed for
% the present paper) of the \textit{induced compatible symplectic
% fibration theorem} (Theorem 1.1) of \cite{abi} established in the DSG setting.

In brief, we aim to extend the results of \cite{mfa} to the DCG
setting.  Our formulation builds on the natural
\textit{symplectic-contact dictionary} and incorporates the approach
of our prior work \cite{abi}.
\medskip

Let us summarize our results. Denote by $\mathbb{K}$ an algebraically
closed field of characteristic zero. Then a variation of
\cite[Theorem 1.1]{abi} with obvious modifications will give the
following immediate result:

\begin{thmx}[Induced compatible exact symplectic fibrations] \label{thm:2}
  Let $\pi: X \rightarrow S$ be a morphism of derived Artin
  $\K$-stacks. If $X$ admits an $n$-shifted contact form $\alpha_X$
  whose contact line bundle is pulled back from the base, $\cL_X
  \simeq \pi^*\cL$ for a line bundle $\cL$ on $S$, and the
  restriction of $d_{tot}\alpha_X$ to each geometric fiber $X_s$ is
  non-degenerate --- equivalently, defines an $n$-shifted symplectic
  structure on $X_s$ after any trivialization of the line
  $\iota_s^*\pi^*\cL$ --- then $\pi$ admits an $\cL$-twisted
  $n$-shifted exact symplectic fibration structure such that
  $\alpha_X$ is compatible with $\pi$ in the sense of Definition
  \ref{def:compatibility}.    (See Section \ref{sec:Ind_symp_fib}.)
\end{thmx}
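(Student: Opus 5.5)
The plan is to follow the proof of \cite[Theorem 1.1]{abi}, replacing the closed relative $2$-form used there by the \emph{exact} relative $2$-form coming from the contact form. I will use the relative de Rham/cotangent formalism of \cite{abi}: the fibre sequence $\pi^*\LL_S \to \LL_X \to \LL_{X/S}$ and the induced restriction map $\mathcal{A}^{p}(X,n)\to \mathcal{A}^{p}(X/S,n)$ from absolute to relative (closed) forms.

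First, I push $\alpha_X$ to a relative object. An $n$-shifted contact form $\alpha_X$ is a $1$-form of degree $n$ with values in $\cL_X$, that is, a map $\alpha_X:\TT_X\to \cL_X[n]$. Using the given equivalence $\cL_X\simeq\pi^*\cL$, I compose with $\TT_{X/S}\to\TT_X$ to get a relative $\pi^*\cL$-valued $1$-form
\[
\alpha_{X/S}:\TT_{X/S}\to\pi^*\cL[n].
\]
This is well defined because the twisting line bundle is pulled back from $S$. As a consequence, the relative de Rham differential $d_{X/S}$ acts on $\pi^*\cL$-twisted relative forms, since $\pi^*\cL$ carries a canonical flat relative connection. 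I then set
\[
\omega_{X/S}:=d_{X/S}\alpha_{X/S},
\]
which is canonically equivalent to the image of $d_{tot}\alpha_X$ under the restriction map. With this choice, $\omega_{X/S}$ is an exact relative closed $2$-form, with primitive $\alpha_{X/S}$. This is precisely the data in Definition \ref{defn_shifted symp fibration} of an $\cL$-twisted exact symplectic fibration, apart from the nondegeneracy condition.

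Second, I prove that $\omega_{X/S}$ is nondegenerate, i.e.\ that $\TT_{X/S}\to\LL_{X/S}\otimes\pi^*\cL[n]$ is an equivalence. I expect this to be the main obstacle. I would check the condition on geometric fibres. Base change gives $\iota_s^*\LL_{X/S}\simeq\LL_{X_s}$, and pulling back $\omega_{X/S}$ along $\iota_s$ recovers the restriction of $d_{tot}\alpha_X$ to $X_s$, which is nondegenerate by hypothesis once $\iota_s^*\pi^*\cL$ is trivialized. Since both sides are perfect complexes on $X$, and an equivalence between perfect complexes can be detected after pulling back to all geometric points, the map is an equivalence on $X$. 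The delicate point is to justify that checking fibrewise over $S$ is enough. For this I would argue that every geometric point of $X$ lies in some $X_s$, and that the derived fibre's cotangent complex computes the derived restriction of $\LL_{X/S}$. This is where derived base change and the Artin hypothesis are needed.

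Finally, compatibility in the sense of Definition \ref{def:compatibility}: by construction, the image of $\alpha_X$ under the map from absolute to relative forms is the chosen relative primitive $\alpha_{X/S}$. The image of $d_{tot}\alpha_X$ is $\omega_{X/S}$, with the homotopies supplied by the functoriality of $d_{tot}$ with respect to $\LL_X\to\LL_{X/S}$. Together these verify the required commuting conditions.
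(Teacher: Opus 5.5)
Your proposal is correct and follows essentially the same route as the paper's proof: project $\alpha_X$ along $\LL_X\to\LL_{X/S}$ to a relative $\pi^*\cL$-twisted primitive, take its relative de Rham differential as the exact relative $2$-form, and deduce non-degeneracy from $\iota_s^*\LL_{X/S}\simeq\LL_{X_s}$ together with conservativity on geometric points for perfect complexes. The differences are cosmetic. You make the twisted relative differential well defined through the canonical flat relative connection on $\pi^*\cL$, where the paper uses the weight dictionary on the symplectification. Also, compatibility in Definition \ref{def:compatibility} is just fiberwise non-degeneracy of $\iota_s^*d_{tot}\alpha_X$, which is exactly your hypothesis, so your final paragraph does more than is needed.
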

This result can indeed be verified easily, as it amounts to the DCG
analogue (needed for the present paper) of the \textit{induced
compatible symplectic fibration theorem} (Theorem 1.1) of \cite{abi}
established in the DSG setting.
\medskip

Conversely, inspired by the result of \cite{mfa} (given in the smooth
setting), one could ask the following: \textit{Does there exist a
  compatible (absolute) shifted contact structure on the source $X$
  induced by the given relative shifted exact symplectic structure on a
  morphism $\pi:X\rightarrow S$ of derived stacks,  with a shifted
contact target $S$?} The affirmative answer is given below and can be
considered a DCG analog of the derived Thurston theorem.

\begin{thmx}[Derived Contact Thurston Theorem] \label{thm:1}
  Let $n\in \Z$. Suppose that $X,S$ are two locally finitely
  presented derived $\K$-stacks and that $\pi: X \rightarrow
  (S,\sigma, \cL_S[n])$ is a morphism of derived stacks, with
  $n$-shifted contact target, such that $\pi$ carries an $\cL_S$-twisted $n$-shifted
  exact symplectic fibration structure in the sense of  Definition
  \ref{defn_shifted symp fibration}.
  Then one can construct, under certain conditions, a compatible
  $n$-shifted contact structure on $X$ (cf. Theorem \ref{thm:
  proof_thurston in dcg}).
\end{thmx}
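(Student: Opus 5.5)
The plan is to imitate the smooth construction of \cite{mfa}, where the contact form on the total space is $\pi^*\alpha_S + \epsilon\,\lambda$, with $\lambda$ a global primitive of the fiberwise exact symplectic forms. Here we run it inside the relative-to-absolute formalism of \cite{abi}.

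First we unpack the data. The $\cL_S$-twisted $n$-shifted exact symplectic fibration structure on $\pi$ (Definition \ref{defn_shifted symp fibration}) supplies a relative closed $2$-form $\omega_{\pi}$ with $\cL_S$-coefficients. It also supplies a relative $1$-form $\lambda_\pi$ with $d_{tot}\lambda_\pi\simeq\omega_\pi$ relative to $S$, and $\omega_\pi$ is non-degenerate as a map $\TT_{X/S}\to\LL_{X/S}\otimes\pi^*\cL_S[n]$. On the base, the $n$-shifted contact structure $\sigma$ gives a map $\alpha_S:\TT_S\to\cL_S[n]$ whose kernel $\mathcal{K}_S$ is a distribution. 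The curvature $d\alpha_S|_{\mathcal{K}_S}$ is non-degenerate.

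The key construction step is to lift $\lambda_\pi$ to an absolute twisted $1$-form on $X$. We use the fiber sequence $\pi^*\LL_S\to\LL_X\to\LL_{X/S}$, twisted by $\pi^*\cL_S[n]$. We choose a lift $\tilde\lambda$ of $\lambda_\pi$; this is the ``certain condition'': the obstruction lies in the next term of the long exact sequence, and it vanishes, for instance, if a connection/splitting exists. Exactly as the symplectic Thurston argument in \cite{abi} lifts the relative closed $2$-form using the vanishing of the relevant obstruction class, we assume the analogous hypothesis. We then set
\[ \alpha_X := \pi^*\alpha_S + \tilde\lambda : \TT_X \to \pi^*\cL_S[n], \]
possibly rescaling $\tilde\lambda$ by a constant. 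In the derived algebraic setting no smallness argument is available, so any constant must work formally.

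Next we verify the contact condition. $\alpha_X$ is surjective onto $\pi^*\cL_S[n]$ because $\pi^*\alpha_S$ already is. Let $\mathcal{K}_X=\mathrm{fib}(\alpha_X)$. There is an induced fiber sequence $\TT_{X/S}\to\mathcal{K}_X\to\pi^*\mathcal{K}_S$ (up to the correction by $\tilde\lambda$, which is a shear and does not affect the graded pieces). The curvature $d_{tot}\alpha_X|_{\mathcal{K}_X}$ is then block-triangular with respect to this filtration. On $\TT_{X/S}$ it restricts to $\omega_\pi$, which is non-degenerate. On the quotient it restricts to $\pi^*(d\alpha_S|_{\mathcal{K}_S})$, which is non-degenerate by the contact hypothesis. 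The off-diagonal term is a map between these pieces. A standard five-lemma argument on the induced map of fiber sequences $\mathcal{K}_X\to\mathcal{K}_X^\vee\otimes\pi^*\cL_S[n]$ then shows that $d_{tot}\alpha_X|_{\mathcal{K}_X}$ is an equivalence. This is the same mechanism as in the proof of the derived symplectic Thurston theorem. Finally, compatibility with $\pi$ (Definition \ref{def:compatibility}) holds by construction. Indeed, $\cL_X\simeq\pi^*\cL_S$, and the restriction of $d_{tot}\alpha_X$ to the fibers recovers $\omega_\pi$; this is the hypothesis of Theorem \ref{thm:2} in reverse.

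The main obstacle is the lifting step. We need an absolute, closedness-compatible lift of $\lambda_\pi$ such that $d_{tot}\tilde\lambda$ agrees with the chosen lift of $\omega_\pi$ up to coherent homotopy. We also need the twisting by $\cL_S$ to be handled consistently, since $d_{tot}$ of an $\cL$-valued form requires a connection-type datum. We would first try to impose, as the ``certain conditions,'' the vanishing of the obstruction class in $H^{n+1}(X,\pi^*\LL_S\otimes\pi^*\cL_S)$ together with a compatible connection on $\cL_S$. Afterwards we would check whether the block-triangular non-degeneracy argument survives the homotopy corrections.
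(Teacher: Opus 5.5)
There is a genuine gap in the non-degeneracy step, which is where the paper's proof does its real work. First, $\TT_{X/S}$ does not map into $\mathcal{K}_X=\mathrm{fib}(\alpha_X^\vee)$. Indeed, $\pi^*\sigma^\vee$ kills vertical directions, but $\tilde\lambda^\vee$ restricts there to $\lambda_\pi^\vee$, which is nonzero in general. So the fiber sequence $\TT_{X/S}\to\mathcal{K}_X\to\pi^*\mathcal{K}_S$ does not exist, and calling the discrepancy a ``shear'' presupposes what must be proved. To build a shear you need $\alpha_X^\vee$ to be invertible on a complement $\pi^*\cL_S[n]$ of $\pi^*\mathcal{K}_S$ (the Reeb direction). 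This is not inherited from $\pi^*\sigma^\vee$, because $\tilde\lambda$ can cancel $\pi^*\sigma$ there; for the same reason, ``surjectivity'' of $\alpha_X^\vee$ does not follow from that of $\pi^*\sigma^\vee$. In the paper this is the operator $c\cdot\mathrm{id}+K$, and only its invertibility makes the graph map $\kappa$ an equivalence $\TT_{X/S}\oplus\pi^*\mathcal{K}_S\simeq\mathcal{K}_{U_1}$.

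Second, even after such an identification, $(d_{tot}\alpha_X)^\flat$ is not block-triangular. The vertical piece is symplectic, not isotropic, so it is not carried into its own annihilator, and the five lemma gives nothing. Invertibility is governed by a Schur complement. Relative to the $d_{tot}\tilde\lambda$-orthogonal splitting, the horizontal block is (a multiple of) $\pi^*(\omega_{\mathcal{K}_S})^\flat$ plus the horizontal part $C$ of $d_{tot}\tilde\lambda$, plus shear corrections. Nothing makes this invertible for a fixed constant.

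In fact, the claim that ``any constant must work formally'' is false. Take $X=F\times S$ with $\cL_S\simeq\cO_S$ and $F$ exact symplectic with primitive $\lambda_F$. A lift of $\lambda_\pi$ is determined only up to adding $\pi^*$ of $1$-forms on $S$, so $\tilde\lambda=\mathrm{pr}_F^*\lambda_F-N\pi^*\sigma$ is admissible for every $N\in\K$, and it satisfies both your hypotheses and the paper's. Then $\pi^*\sigma+\epsilon\tilde\lambda=(1-\epsilon N)\pi^*\sigma+\epsilon\,\mathrm{pr}_F^*\lambda_F$. For $N\neq 0$ and $\epsilon=N^{-1}$, the subobject $\mathrm{pr}_S^*\TT_S$ lies both in its kernel and in the radical of its differential, so the form is not contact.

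The missing smallness argument is replaced by genericity, not formality. The paper sets $\alpha_X=\Omega+c\cdot\pi^*\sigma$ and uses the $\omega_\Omega$-orthogonal splitting $\TT_X\simeq\TT_{X/S}\oplus\pi^*\TT_S$ together with the $\omega_S$-orthogonal splitting $\TT_S\simeq\mathcal{K}_S\oplus\cL_S[n]$. After rescaling by $u=c^{-1}$, at $u=0$ the form becomes $\mathrm{diag}\big((\omega_\Omega)_{rel}^\flat,\pi^*(\omega_{\mathcal{K}_S})^\flat\big)$, which is invertible. Hence invertibility fails only for finitely many $c$ at each generic point. A generic $c$ then gives a contact structure on a dense Zariski-open $U\subseteq X$, which is all of $X$ only when $X$ is proper.

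The paper's route (a) avoids this bookkeeping entirely. Pull back to $\widetilde X=X\times_S\widetilde S$ and apply the symplectic Thurston theorem of \cite{abi} to the untwisted weight-$1$ data. Then descend $d_{dR}(\widetilde\Omega+c\cdot\tilde\pi^*\widetilde\sigma)$ via derived transversality (Theorem \ref{thm:derived transversality}). This also removes your need for a connection on $\cL_S$, since twisted $d_{tot}$ is computed on the symplectification.

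Your lifting hypothesis is essentially the paper's hypothesis on $\Omega$, so that part is fine. What must change is the conclusion (a generic constant, on an open locus) and the non-degeneracy argument.
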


As applications of Theorem \ref{thm:1}, we present in this paper
several constructions of absolute shifted contact structures on
\textit{certain} exact symplectic fibrations, including conormal stacks and
(a dense Zariski-open derived substack of) the quotient mapping stacks. More
precisely, we prove:
\begin{thmx}\label{cor: app to Thm1}
  \begin{enumerate}[\upshape(a)]
    \item If $f: X \rightarrow S$ is a morphism of derived Artin
      $\K$-stacks locally of finite presentation such that $S$ admits
      an $(n-1)$-shifted contact structure with contact line bundle
      $\cL_S$ and $(n-1)$-contact form $\sigma$, then the
      $\cL_S$-twisted $n$-shifted conormal stack $\pi:
      N^*_{\cL_S}[n] f\longrightarrow S$ carries a canonical
      $\cL_S$-twisted $(n-1)$-shifted exact symplectic fibration
      structure $\omega$.
      
      Assume further that there exists a
      $\pi^*\cL_S$-twisted $(n-1)$-shifted 1-form $\Omega \in
      \mathcal{A}^{1}(N^*_{\cL_S}[n] f, \pi^*\cL_S, n-1)$ with
      $\iota_s^*d_{tot}\Omega \sim \omega_s := \iota_s^*\omega$ for
      every $s \in S(\K)$; and that after any trivialization of the line $L_s
      := s^*\cL_S$, $\omega_s$ is the canonical $(n-1)$-shifted exact
      symplectic structure of the fiber. Then there exists an induced $(n-1)$-shifted contact structure,
      with contact line bundle the restriction of $\pi^*\cL_S$, on a
      dense Zariski-open derived substack of $N^*_{\cL_S}[n] f$,
      compatible with the twisted exact symplectic fibration $\pi$
      (cf. Proposition \ref{prop: conormal}).

      Furthermore, the result extends   to the case of $\beta$-deformed conormal stacks for
      twisted closed 1-forms $\beta \in \mathcal{A}^{1,cl}(S, \cL_S,
      n+1)$ (cf. Corollary \ref{cor: conormal}).

    \item  Let $Y$ be an $n$-shifted contact derived Artin $\K$-stack
      and $X$ be a smooth projective Calabi-Yau $m$-fold. Suppose $f:
      L \rightarrow Y$ is an $n$-shifted Legendrian morphism and $p:
      Y \rightarrow S$ is an $n$-shifted Legendrian fibration such
      that the base $S$ admits an $(n-1)$-shifted contact structure
      $(\sigma_S, \cL_S[n-1])$. Under certain conditions, there
      exists an $(n-m-1)$-shifted contact structure on a dense
      Zariski-open derived substack of the quotient mapping stack
      $L_X := [\mathsf{Map}(X,\widetilde{L})/\mathbb{G}_m],$ 
      compatible with the induced twisted exact symplectic fibration
      $\Pi: L_X \to S_X :=
      [\mathsf{Map}(X,\widetilde{S})/\mathbb{G}_m]$ (cf. Proposition
      \ref{prop: mapping_stack_thurston}).
  \end{enumerate}
\end{thmx}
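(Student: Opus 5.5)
Both parts will be derived from Theorem \ref{thm:1} (the derived contact Thurston theorem, Theorem \ref{thm: proof_thurston in dcg}). For each part the work splits into two steps. First we build the $\cL_S$-twisted exact symplectic fibration structure on the relevant morphism. Then we check that the extra hypotheses listed in the statement are exactly the ``certain conditions'' of Theorem \ref{thm:1}, so that it produces a compatible contact structure.

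\textbf{Part (a).} We start from the tautological twisted Liouville form. The conormal stack $N^*_{\cL_S}[n]f$ is the relative twisted shifted cotangent stack $\mathrm{Spec}_X \mathrm{Sym}(\mathbb{T}_f\otimes f^*\cL_S^{-1}[-n])$, with projection $\pi$ to $S$ through $X$. It carries a tautological relative $1$-form $\lambda$: a point $(x,\xi)$ has $\xi\in \LL_f\otimes\cL_S[n]$, and $\lambda$ is the pullback of $\xi$ along $\LL_{N^*}\to\LL_{N^*/S}$. This $\lambda$ is $\pi^*\cL_S$-twisted and of degree $n-1$ after the shift.

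Following the argument of \cite{abi} for relative shifted cotangent stacks, $\omega:=d_{tot}\lambda$ is relatively non-degenerate. The reason is that the relative tangent complex sits in a fiber sequence
\[
\TT_{N^*/X}\to \TT_{N^*/S}\to \TT_{X/S},
\]
and $\omega$ pairs $\TT_{N^*/X}\simeq \pi^*\LL_f\otimes\cL_S[n]$ with $\TT_{X/S}$ via the evaluation pairing. This gives the canonical exact symplectic fibration structure.

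The compatibility data is then read off directly. The hypothesis on $\Omega$, namely $\iota_s^*d_{tot}\Omega\sim\omega_s$ with $\omega_s$ the canonical exact structure on each fiber $T^*[n-1]X_s$ (after trivializing $L_s$), is precisely the global extension condition used in Theorem \ref{thm:1}.

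We then form the candidate contact form $\alpha=\pi^*\sigma+\Omega$, or $\pi^*\sigma+t\,\Omega$ with a parameter, in analogy with \cite{mfa}. Non-degeneracy of $d_{tot}\alpha$ restricted to the kernel of $\alpha$ splits into two parts:
\begin{itemize}
\item a horizontal part, controlled by the contact condition on $\sigma$;
\item a vertical part, controlled by $\omega_s$.
\end{itemize}
The splitting of the horizontal and vertical parts fails only on a closed locus where a determinant-type section of a line bundle vanishes. Its complement is the dense Zariski-open substack in the statement. Density holds because the section is generically nonzero, which we check on the zero section, where the vertical term is the standard one.

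For the $\beta$-deformed version we replace $\lambda$ by $\lambda+\pi^*\beta$. Since $\beta$ is closed, $d_{tot}$ is unchanged, so the same argument applies verbatim.

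\textbf{Part (b).} We use transgression (AKSZ). The Calabi–Yau $m$-fold $X$ carries an orientation of degree $m$, so integrating along $X$ lowers shifts by $m$. On the $\mathbb{G}_m$-torsors $\widetilde{L}\to\widetilde{Y}\to\widetilde{S}$ (the symplectizations), the shifted contact structures become $\mathbb{G}_m$-equivariant symplectic ones of weight one. We proceed in three steps.
\begin{enumerate}
\item The Legendrian fibration $p$ together with the Legendrian $f$ yield a relative exact symplectic structure on $\widetilde{L}\to\widetilde{S}$, as in part (a): locally $Y$ is a conormal-type object over $S$.
\item Transgressing via $\int_{[X]}\mathrm{ev}^*(-)$ gives a twisted $(n-m-1)$-shifted exact symplectic fibration structure on $\mathrm{Map}(X,\widetilde{L})\to \mathrm{Map}(X,\widetilde{S})$. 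This structure is $\mathbb{G}_m$-equivariant, so it descends to an $\cL$-twisted structure on the quotient $\Pi$.
\item Transgressing $\sigma_S$ gives the $(n-m-1)$-shifted contact form on $S_X$.
\end{enumerate}
The ``certain conditions'' are then the transgressed versions of the extension hypothesis on $\Omega$. Applying Theorem \ref{thm:1}, and again restricting to the open locus where the horizontal and vertical non-degeneracy holds, completes the proof.

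\textbf{Main obstacle.} We expect the hardest step to be verifying non-degeneracy together with Zariski density of the good locus. Both in (a) and after transgression in (b), we must show that the mixed terms $d\Omega$ restricted to horizontal directions do not destroy the contact condition generically. We would first try to exhibit one explicit point of each connected component where the condition holds: the zero section in (a), and constant maps in (b). Openness then follows, and irreducibility of components gives density.
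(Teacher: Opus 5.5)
Your overall architecture matches the paper's: build the $\cL_S$-twisted exact symplectic fibration, then feed it together with $\Omega$ into Theorem~\ref{thm: proof_thurston in dcg}.

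For (a)(i), your direct route is a legitimate alternative. You take the twisted tautological form on the twisted relative cotangent stack over $X$ and read non-degeneracy off $\TT_{N^*/X}\to\TT_{N^*/S}\to\TT_{X/S}$. The paper instead works upstairs:
\begin{itemize}
\item the Lagrangian $N^*[n]\tilde f\to T^*[n]\widetilde S$ is followed by the Lagrangian fibration $T^*[n]\widetilde S\to\widetilde S$, and \cite[Proposition 1.10]{Safronov2023} gives the relative structure;
\item exactness comes from the null-homotopy of the tautological form;
\item then weight-$1$ equivariance and descent bring everything down to $X$.
\end{itemize}
Your version is more explicit and works because the twist is pulled back from $S$, so the relative de Rham differential makes sense. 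However, fix the shift: the fibers must be $\LL_f\otimes\cL_S[n-1]$, not $[n]$, to get an $(n-1)$-shifted structure with fibers $T^*[n-1]X_s$. What the paper's route buys is a template that is reused verbatim in the two places where your proposal breaks down.

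\textbf{The $\beta$-deformed case.} $N^*_{\cL_S,\beta}[n]f=[N^*_{\widetilde\beta}[n]\tilde f/\mathbb{G}_m]$ is a different stack, built from the graph $\Gamma_{\widetilde\beta}\colon\widetilde S\to T^*[n+1]\widetilde S$. It is not $N^*_{\cL_S}[n]f$ with a new primitive. Moreover, $\lambda+\pi^*\beta$ does not typecheck: $\beta$ has degree $n+1$ while $\lambda$ has degree $n-1$. In any case, anything pulled back from $S$ is null-homotopic as a relative form over $S$. The missing step is that $\widetilde\beta$ has weight $1$, so $\Gamma_{\widetilde\beta}$ is equivariant for the diagonal action. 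The Lagrangian-composition, equivariance and descent argument then runs on $N^*_{\widetilde\beta}[n]\tilde f$.

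\textbf{Part (b), Step~1.} The claim ``locally $Y$ is conormal-type over $S$'' is not an argument. Even granting such local models, local primitives would not glue to a canonical relative primitive on $\widetilde L\to\widetilde S$. The actual mechanism uses $\cL_Y\simeq p^*\cL_S$, a hypothesis you never state. It is what gives $\widetilde Y\simeq Y\times_S\widetilde S$ with an equivariant $\tilde p$. It also makes $\Map(X,\widetilde L)\to L_X$ the pullback of $\Map(X,\widetilde S)\to S_X$, so that the twist on $L_X$ is $\Pi^*\cL_{S_X}$, as Theorem~\ref{thm: proof_thurston in dcg} requires. With it, one lifts $f$ and $p$ to a weight-$1$ Lagrangian and Lagrangian fibration, transgresses both, and composes via \cite[Proposition 1.10]{Safronov2023}.

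\textbf{Non-degeneracy.} Your re-derivation of non-degeneracy and density is unnecessary, since Theorem~\ref{thm: proof_thurston in dcg} already supplies it. As sketched, it also fails:
\begin{itemize}
\item $\Omega$ is constrained only along fibers. The zero section (or constant maps) therefore gives no control on the horizontal block or the mixed term. That is exactly why $\alpha=\Omega+c\,\pi^*\sigma$ needs a generic scalar $c$; without it, $\pi^*\sigma+\Omega$ need not be contact anywhere.
\item One good point per connected component yields density only when components are irreducible. The theorem instead works at the generic points of all irreducible components.
\end{itemize}
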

Finally, we establish a canonical construction of a shifted contact
structure on the source for an affine $n$-shifted exact symplectic
fibration, with contact base.
In brief, we have:

\begin{thmx}[Affine exact symplectic fibrations with contact base]
  \label{thm:affine_model}
  There is a canonical affine  model for an $n$-shifted exact
  symplectic fibration structure on the map $\spec\beta: \spec A \to
  \spec B $ of derived affine schemes over an $n$-shifted contact
  base, induced from a submersion $\beta:B\rightarrow A$ of standard
  form cdgas, such that $\spec A$ inherits a natural shifted contact
  structure (cf. Section \ref{sec:affine symplectic fibration}).
\end{thmx}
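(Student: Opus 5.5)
The plan is to work entirely with Darboux-type local models for shifted contact structures, in the style of Brav–Bussi–Joyce and its contact analogues. We may assume $\spec B$ is a standard form cdga with an $n$-shifted contact form $\sigma_B$ with contact line $\cL_B$. Passing to a Zariski-localization if necessary, we trivialize $\cL_B$ and fix contact Darboux coordinates on $B$. In these coordinates $\sigma_B = dz + \sum_i y_i\, dx_i$ (suitably graded), and $\dR \sigma_B$ is non-degenerate on the kernel of $\sigma_B$.

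\emph{Step 1: the map.} Since $\beta: B \to A$ is a submersion of standard form cdgas, $A$ can be written as $B$ with free fiber generators $u_j, v_j$ adjoined. These generators are placed in degrees complementary with respect to the shift $n$, so that $A$ is a standard form cdga over $B$ with relative cotangent complex $\LL_{A/B}$ freely generated by $du_j, dv_j$.

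\emph{Step 2: the fibration structure.} On this model we define the canonical relative $1$-form $\lambda = \sum_j u_j\, dv_j$, twisted by $\beta^*\cL_B$. Its relative de Rham differential $\omega_{A/B} = \sum_j du_j\, dv_j$ is non-degenerate on $\LL_{A/B}$, because it pairs the two sets of generators perfectly. Hence $(\lambda, \omega_{A/B})$ defines a $\cL_B$-twisted $n$-shifted exact symplectic fibration structure in the sense of Definition \ref{defn_shifted symp fibration}. This step requires only checking the exactness data against the definition.

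\emph{Step 3: the contact form on $\spec A$.} Following the smooth construction of \cite{mfa}, where the form is $\alpha = \pi^*\alpha_B + \lambda$ using a connection, we set
\[
\alpha_A := \beta^*\sigma_B + \tilde\lambda ,
\]
where $\tilde\lambda = \sum_j u_j\, dv_j$ is the absolute lift of $\lambda$ given by the chosen splitting
\[
\LL_A \simeq \beta^*\LL_B \oplus \LL_{A/B}.
\]
The splitting is canonical for standard form generators.

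\emph{Step 4: the contact condition (main obstacle).} We must verify non-degeneracy of $d\alpha_A$ restricted to the kernel of $\alpha_A$, as a map $\TT_A \to \LL_A[n]\otimes\beta^*\cL_B$. The computation gives
\[
d\alpha_A = \beta^*\dR\sigma_B + \sum_j du_j\, dv_j ,
\]
which is block diagonal with respect to the splitting. Each block is non-degenerate on its own summand: the first on $\ker \sigma_B$, the second on $\LL_{A/B}$. It follows that $\ker\alpha_A$ is an extension of $\beta^*\ker\sigma_B$ by $\TT_{A/B}$, twisted by the term $\tilde\lambda$.

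The delicate point is that $\tilde\lambda$ alters the kernel. Concretely, $\ker\alpha_A$ is the graph of the map $\TT_{A/B} \to \langle\partial_z\rangle$ given by $\tilde\lambda$. One then shows that the induced pairing is still a quasi-isomorphism; the cleanest route is to filter by fiber degree and check on the associated graded.

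Finally, we check compatibility in the sense of Definition \ref{def:compatibility}. The restriction of $\alpha_A$ to each fiber is $\lambda$, and the contact line of $\alpha_A$ is $\beta^*\cL_B$. Canonicity means independence of the choice of Darboux coordinates up to contractible choice; I expect to handle this by comparing two charts through the homotopy-invariance of the construction.
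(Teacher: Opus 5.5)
Your overall route matches the paper's. You put $(B,\sigma_B)$ in contact Darboux form with the line bundle trivialized, adjoin paired fiber generators $u,v$ in complementary degrees, take the fiberwise Liouville form as relative primitive, and set $\alpha_A=\beta^*\sigma_B+\tilde\lambda$. The paper's form $\alpha_A=\Omega^0+c\,(\spec\beta)^*\sigma^0$ equals $\tilde\lambda+(1+c)\,\beta^*\sigma^0$, so yours is the case $c=0$ of its admissible range $c\neq-1$.

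You are also right that $\ker\alpha_A$ is not literally $\beta^*\mathcal{K}_B\oplus\TT_{A/B}$ but the graph of $-\tilde\lambda\colon\TT_{A/B}\to A\cdot\partial_{\hat z}$ over it; the paper's Step-6 passes over this. However, you leave this point unproved, even though you call it the main obstacle, and offer only a vague appeal to filtering by fiber degree. The missing observation is short. The form $\dR\alpha_A=\beta^*\dR\sigma_B+\sum_j\dR u_j\,\dR v_j$ has no $\dR\hat z$ component, so the Reeb field $\partial_{\hat z}$ lies in the radical of $\dR\alpha_A$, while $\alpha_A(\partial_{\hat z})=1$. Here $\partial_{\hat z}$ is a cocycle in $\TT_A$, since no generator's differential involves $\hat z$. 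Hence $\mathcal{K}_A\to\TT_A\to\TT_A/A\cdot\partial_{\hat z}$ is a quasi-isomorphism, and $(\dR\alpha_A)^\flat$ factors through this quotient. There it is the block-diagonal map $\beta^*(\dR\sigma_B)^\flat\oplus\omega_{A/B}^\flat$ on $\langle\partial_{\tilde x},\partial_{\tilde y}\rangle\oplus\TT_{A/B}$, and the first summand is identified with $\beta^*\mathcal{K}_B$ by the same argument on $B$. So the graph correction contributes nothing.

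The model itself is also underspecified. Your Step 1 presents the paired generators as a consequence of $\beta$ being a submersion, which is false for a general submersion of standard form cdgas. The statement is existential, so $A$ must be \emph{constructed}, as the paper does: $A(0)$ is smooth over $B(0)$ with extra étale coordinates $u^0$, and $\beta$ sends the generators of $B$ to $\tilde x,\tilde y,\hat z$.

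More importantly, you never define $d_A$ on the fiber generators, so you verify none of the following:
\begin{itemize}
\item that $\beta$ is a cdga map;
\item that $\omega_{A/B}$ is $d_A$-closed (the paper gets this from a fiber superpotential $G$ satisfying the classical master equation);
\item that $\tilde\lambda$, and hence $\alpha_A$, is a $d_A$-cocycle, which is needed for them to be $1$-forms of degree $n$ at all.
\end{itemize}
This last condition is a genuine constraint. For instance, with $n=-1$, $G=G(u^0)$ and $d_Av_j=\pm\partial G/\partial u_j$, one finds $d_A\big(\sum_j v_j\,\dR u_j\big)=\pm\dR G$. So you must either give the fiber generators zero differential, or restrict $G$ so that $\tilde\lambda$ can be corrected by a $\dR$-exact term (e.g.\ $G$ lying in its Jacobian ideal). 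Note that the paper's Step-4 records only $d_A\gamma=0$.

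Finally, your promised independence of the Darboux chart ``up to contractible choice'' is neither proved nor needed. ``Canonical'' in the statement refers to the explicit construction, so either drop that claim or supply the argument.
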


\paragraph{Organization of the paper.} In Section
\ref{sec:terminology}, we overview the frameworks of DSG and DCG.
Section \ref{sec:Relative structure} outlines the relative setting.
We prove Theorem \ref{thm:2} in Section \ref{sec:Ind_symp_fib}. The
precise version of Theorem \ref{thm:1} (cf. Theorem \ref{thm:
proof_thurston in dcg}) will be established in Section
\ref{sec:results}.  We discuss, in Section \ref{sec:applications},
applications of Theorem \ref{thm:1} and give the proof of Theorem
\ref{cor: app to Thm1}.  Section \ref{sec:affine symplectic
fibration} concludes the paper by investigating the affine case and
establishing Theorem \ref{thm:affine_model}. Lastly, in addition to
the main text, the paper also includes an appendix, Appendix
\ref{appendix_DAG}, covering the basics of derived algebraic geometry.

\paragraph{Conventions.} Throughout the paper, all cdgas will be
graded in non-positive degrees and over $ \mathbb{K}.$ We will always
consider $ \K $-schemes/stacks and assume that all classical $ \K
$-schemes are \emph{locally of finite type}, and that all derived $
\K $-schemes/stacks $ {X} $ are  \emph{locally finitely presented.}

\paragraph{Acknowledgments.}
This work was supported by the Scientific and Technological Research
Council of Türkiye (TÜBİTAK) under the 1001-Scientific and
Technological Research Projects Funding Program (Project No: 125F117).

All authors warmly thank the \textit{Higher Structures Research
Group} in the Mathematics Department at
Middle East Technical University for creating a stimulating research
environment.

%===================================
%===================================
%===================================
%\newpage

\section{Preliminaries: Derived contact and symplectic geometry}
\label{sec:terminology}

We outline in this section the foundational concepts from derived
symplectic and contact geometry, following
Pantev--To\"{e}n--Vaqui\'{e}--Vezzosi (PTVV) \cite{PTVV},
Brav--Bussi--Joyce \cite{Brav}, and Berktav \cite{kib1}.

\medskip

Let $X$ be a derived Artin $\K$-stack. Denote by $\LL_X, \TT_X,
\,DR(X)$ the \bfem{cotangent complex}, the \bfem{tangent complex},
and the \bfem{de Rham complex} of $X$, respectively. Appendix
\ref{appendix_DAG} details derived algebraic geometry basics. By a
\bfem{$p$-form of degree $n$ on $X$}, we mean an $n$-cocycle in $DR^p(X) [p]$.
Likewise,  by a \bfem{closed $p$-form of degree $n$ on $X$}, we mean
an $n$-cocycle in $\prod_{i \geq p} DR^i(X) [p]$ for the total
differential $d_{tot}=d+ \dR$. Denote the spaces of such forms by
$\mathcal{A}^p(X,n)$ and $\mathcal{A}^{p,cl}(X,n)$, respectively.

Note  that a $1$-form $\lambda \in \mathcal{A}^1(X,n)$ is $d$-closed of
weight $1$, so that \[d_{tot}\lambda = d_{dR}\lambda \ \text{ and } \
(d_{dR}\lambda, 0, 0, \dots) \in \mathcal{A}^{2,cl}(X,n).\] This is the
sense in which $d_{tot}$ is applied to $1$-forms throughout the paper.

\medskip

Also, we define an \bfem{equivalence relation} by considering paths
between objects in $\mathcal{A}^p(X,n)$ and $\mathcal{A}^{p ,
cl}(X,n)$, respectively. A \bfem{path} from $\omega^1$ to $\omega^2$
in $\mathcal{A}^p(X,n)$ is given by an element $h$ of weight $ p $
and degree $ (p + n) -1 $ such that $\omega^1-\omega^2=dh.$
Similarly, a path from $\omega^1=\omega^1_p + \omega^1_{p+1} + \cdots
$ to $\omega^2=\omega^2_p + \omega^2_{p+1} + \cdots $ is given by a
formal series $h=h_p + h_{p+1} + \cdots $ such that we have
$\omega^1_q -\omega^2_q= dh_q + \dR h_{q-1}.$

\begin{definition}[Shifted Symplectic and Contact Structures]
  Let $X$ be a derived Artin $\K$-stack.
\begin{enumerate}\itemsep=.1in
    \item An \bfem{$n$-shifted symplectic structure} on $X$ is a
      closed 2-form $\omega \in \mathcal{A}^{2,cl}(X, n)$ whose
      underlying map $\omega^\flat\colon \TT_X \to \LL_X[n]$ is an
      equivalence (the \emph{non-degeneracy condition}) in the stable
      $\infty$-category $QCoh(X)$.  Denote the space of such by
      $\mathsf{Symp}(X,n)$.

      If, in addition, the   form $\omega$ is $d_{tot}$-exact, we
      call $\omega$ an $n$-shifted \bfem{exact} symplectic structure.
    \item An \bfem{$n$-shifted contact structure} on $X$ consists of
      a (twisting) line bundle $\mathcal{L}_X$, a twisted $n$-shifted
      1-form $\alpha_X \in \pi_0 \mathbb{R}\Gamma(X, \LL_X
      \otimes^{\mathbb{L}}_{\cO_X} \cL_X[n])$, and a non-degenerate
      contact distribution defined as the homotopy fiber
      $$\mathcal{K}_X \simeq \mathrm{fib}(\alpha_X^\vee\colon \TT_X
      \to \mathcal{L}_X[n])$$ such that the restriction of the
      derived de Rham differential $d_{dR}\alpha_X$ to the contact
      distribution $\mathcal{K}_X$ induces an equivalence
      $\mathcal{K}_X \xrightarrow{\sim} \mathcal{K}_X^\vee
      \otimes^{\mathbb{L}}_{\cO_X} \mathcal{L}_X[n]$ in   $QCoh(X)$.
      We denote the space of such structures by $\mathsf{Cont}(X,n)$.
      A typical element of $\mathsf{Cont}(X,n)$ is written as a tuple
      $(X,\alpha_X, \cL_X[n])$ or simply as $(X,\alpha_X)$. We also
      call such defining  1-forms simply \bfem{$n$-contact forms.}
  \end{enumerate}
\end{definition}

\begin{notation}[Twisted forms and weights] \label{notn:twisted}
  Let $\mathcal{L}$ be a line bundle on $X$. We write
  \[ \mathcal{A}^p(X, \mathcal{L}, n) := \pi_0 \mathbb{R}\Gamma\big(X,
  \wedge^p \LL_X \otimes^{\mathbb{L}}_{\cO_X} \mathcal{L}[n]\big) \]
  for the space of \bfem{$\mathcal{L}$-twisted $p$-forms of degree
  $n$}, and $\mathcal{A}^{p,cl}(X,\mathcal{L},n)$ for the closed ones,
  so that $\mathcal{A}^p(X,\cO_X,n)=\mathcal{A}^p(X,n)$ and an
  $n$-contact form on $X$ is then an element of $\mathcal{A}^1(X,
  \mathcal{L}, n)$. Write $p_X: \widetilde{X} \rightarrow X$ for the
  principal $\mathbb{G}_m$-bundle of $\mathcal{L}$, with the action
  map $\rho$. Since $p_{X*}
  \cO_{\widetilde{X}} \simeq \bigoplus_{w \in \Z} \mathcal{L}^{\otimes
  w}$ as $\mathbb{G}_m$-representations, pullback along $p_X$
  identifies $\mathcal{A}^p(X, \mathcal{L}, n)$ with the space of
  \bfem{basic} (i.e. lying in the image of $p_X^*\LL_X$)
  \bfem{weight}\footnote{The $\Gm$-action has \bfem{weight 1} with respect to
    $\beta_{\widetilde{X}}\in \mathcal{A}^p(X,n)$ if the derived
    pullback of the action map
    $\rho: \Gm \times \widetilde{X} \to \widetilde{X}$ satisfies $\rho^*
    \beta_{\widetilde{X}} \simeq z \cdot \pi_{\widetilde{X}}^*
    \beta_{\widetilde{X}}$,
    where $z$ is the character coordinate on $\Gm$
  \cite[Section 2.1.1]{Calaque2019}.} $1$
  $p$-forms of degree $n$ on $\widetilde{X}$, and likewise for closed
  forms. We refer to this as the \bfem{weight dictionary}.
\end{notation}

\begin{remark} \label{rmk:twisted d_dR}
  For a twisted $1$-form $\alpha_X \in \mathcal{A}^1(X,\mathcal{L}_X,n)$
  the expression $d_{dR}\alpha_X$ is not defined on $X$ without a
  choice of connection on $\mathcal{L}_X$; only its restriction to
  $\mathcal{K}_X$ is, which is what the non-degeneracy condition above
  refers to. Whenever we write $d_{dR}\alpha_X$ or $d_{tot}\alpha_X$
  for a twisted form, we mean the corresponding weight $1$ form on the
  symplectification, computed through the weight dictionary in Notation \ref{notn:twisted}. When
  $\mathcal{L}_X \simeq \cO_X$ we call the contact structure
  \bfem{coorientable}; a trivialization then identifies twisted forms
  with ordinary forms and every statement below reduces to its
  untwisted form. For the non-coorientable case, $\mathcal{L}_X
  \simeq \cO_X$ holds only locally on $X$, where  $\mathcal{L}_X$ is
  trivialized. In that situation, $\alpha_X$ is identified locally
  with a genuine $1$-form of degree $n$ on $X$. Therefore, we may
  interchangeably use either local $1$-forms or twisted $1$-forms
  when indicating the contact data.
\end{remark}

\begin{lemma}[Fiberwise twists] \label{lem:fiberwise twist}
  Let $\pi: X \rightarrow S$ be a morphism of derived Artin
  $\K$-stacks, $\mathcal{L}$ a line bundle on $S$, and $\iota_s: X_s
  \hookrightarrow X$ the inclusion of the geometric fiber over $s \in
  S(\K)$. Then there is a canonical equivalence
  $\iota_s^*\pi^*\mathcal{L} \simeq \cO_{X_s} \otimes_{\K} L_s$,
  where $L_s := s^*\mathcal{L}$ is a $1$-dimensional $\K$-vector
  space. Consequently, for all $p$ and $n$,
  \[ \mathcal{A}^{p}(X_s, \iota_s^*\pi^*\mathcal{L}, n) \simeq
  \mathcal{A}^{p}(X_s, n) \otimes_{\K} L_s, \qquad
  \mathcal{A}^{p,cl}(X_s, \iota_s^*\pi^*\mathcal{L}, n) \simeq
  \mathcal{A}^{p,cl}(X_s, n) \otimes_{\K} L_s, \]
  so a choice of basis $\ell_s$ of $L_s$ identifies twisted forms on
  $X_s$ with ordinary ones. A twisted $2$-form on $X_s$ is
  non-degenerate if and only if the ordinary $2$-form obtained from
  one (equivalently, any) basis of $L_s$ is non-degenerate, since two
  bases differ by a unit in $\K^\times$.
\end{lemma}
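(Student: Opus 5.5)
The plan is to reduce everything to a statement about line bundles on the fiber and then use the projection formula for derived global sections. Write $\pi_s := \pi\circ\iota_s : X_s \to S$. By definition of the geometric fiber, $X_s$ sits in the homotopy pullback square with $\iota_s: X_s \to X$, $p_s : X_s \to \Spec \K$ and $s:\Spec\K\to S$, so that $\pi\circ\iota_s \simeq s\circ p_s$. Pullback of quasi-coherent sheaves is functorial up to coherent equivalence. This gives a canonical equivalence
\[
\iota_s^*\pi^*\mathcal{L} \simeq (\pi\circ\iota_s)^*\mathcal{L} \simeq (s\circ p_s)^*\mathcal{L} \simeq p_s^*(s^*\mathcal{L}) = p_s^* L_s .
\]
Since $L_s$ is a line bundle on $\Spec\K$, it is a $1$-dimensional $\K$-vector space, and $p_s^*L_s \simeq \cO_{X_s}\otimes_\K L_s$ by the definition of pullback from a point. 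This is the first assertion.

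For the forms, tensor the previous equivalence with $\wedge^p\LL_{X_s}[n]$ to get
\[
\wedge^p\LL_{X_s}\otimes^{\mathbb{L}}_{\cO_{X_s}}\iota_s^*\pi^*\mathcal{L}[n] \simeq \wedge^p\LL_{X_s}[n]\otimes_\K L_s .
\]
Then apply $\mathbb{R}\Gamma(X_s,-)$. Because $L_s$ is a finite-dimensional (free, rank one) $\K$-vector space, tensoring with it commutes with all limits and colimits. Hence $\mathbb{R}\Gamma(X_s, F\otimes_\K L_s)\simeq \mathbb{R}\Gamma(X_s,F)\otimes_\K L_s$, and $\pi_0$ commutes with $-\otimes_\K L_s$ by exactness. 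This yields the statement for $\mathcal{A}^p$.

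For closed forms, the extra input is compatibility with the de Rham differential. Through the weight dictionary of Notation \ref{notn:twisted}, the $\mathbb{G}_m$-bundle of $\iota_s^*\pi^*\mathcal{L}$ is the pullback of the bundle $\widetilde{L_s}\to\Spec\K$. Once a basis is chosen this bundle is trivial, equal to $X_s\times\Gm$. Weight-one basic forms on $X_s\times\Gm$ are exactly $z\cdot\beta$ with $\beta$ a form on $X_s$, and $d_{tot}(z\beta)=z\, d_{tot}\beta$ on the basic part. So the identification is an isomorphism of the complexes $\prod_{i\ge p}DR^i[p]$, and it is canonical up to the $\K^\times$-rescaling $z\mapsto cz$. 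Making this base-change compatibility explicit at the level of the product-of-weights complex is the only mildly delicate step. I would handle it by noting that everything is pulled back from the point, where the connection on $L_s$ is canonically flat (trivial).

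For non-degeneracy, a basis $\ell_s$ turns a twisted $2$-form $\omega$ into an ordinary form $\omega_0$ with $\omega=\omega_0\otimes\ell_s$. The map $\omega^\flat:\TT_{X_s}\to\LL_{X_s}\otimes L_s[n]$ is then $\omega_0^\flat$ followed by the isomorphism $-\otimes\ell_s$, so one is an equivalence if and only if the other is. If $\ell_s' = c\,\ell_s$ with $c\in\K^\times$, the corresponding ordinary forms differ by $c^{-1}$, and multiplying by a unit preserves equivalences. So the condition does not depend on the choice of basis.
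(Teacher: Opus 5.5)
Your proposal is correct and follows the paper's proof essentially step by step. Both arguments obtain $\iota_s^*\pi^*\mathcal{L}\simeq p_s^*s^*\mathcal{L}\simeq \cO_{X_s}\otimes_\K L_s$ from the Cartesian square, use exactness of $-\otimes_\K L_s$ to pass through $\mathbb{R}\Gamma$, $\pi_0$ and the de Rham differentials, and handle non-degeneracy via $\omega^\flat\simeq\omega_0^\flat\otimes\mathrm{id}_{L_s}$, with a change of basis rescaling by a unit. Your detour through $X_s\times\Gm$ is unnecessary, since $d_{dR}(z\beta)$ literally carries an extra $dz\wedge\beta$ term. The remark you end with (the twist is pulled back from a point and so carries the trivial flat connection) is the cleaner way to justify the closed-form step, and it is exactly what the paper's proof asserts.
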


\begin{proof}
  The Cartesian square defining $X_s$ gives $\pi \circ \iota_s \simeq
  s \circ p_s$, where $p_s: X_s \rightarrow \spec \K$ is the
  structure morphism; hence $\iota_s^*\pi^*\mathcal{L} \simeq
  p_s^*s^*\mathcal{L} \simeq \cO_{X_s} \otimes_{\K} L_s$. Since $L_s$
  is a $1$-dimensional $\K$-vector space, the functor $- \otimes_{\K}
  L_s$ is exact and commutes with $R\Gamma$ and with the de Rham
  differentials, which gives the displayed equivalences.
  Non-degeneracy of a twisted $2$-form $\beta$ on $X_s$ means that
  $\beta^\flat: \mathbb{T}_{X_s} \rightarrow \mathbb{L}_{X_s} \otimes
  \iota_s^*\pi^*\mathcal{L}[n]$ is an equivalence; under the
  identification above, $\beta^\flat \simeq (\beta_0)^\flat \otimes
  \mathrm{id}_{L_s}$ for the ordinary $2$-form $\beta_0$ associated
  to a basis, and changing the basis rescales $\beta_0$ by a unit,
  which does not affect being an equivalence.
\end{proof}

\begin{definition}[Lagrangian vs. Legendrian structures]
  \label{defn_isotropic Lag}
  \begin{enumerate}
    \item[ ]
    \item Let $f:  {L} \to X$ be a morphism of derived Artin stacks,
      with a target $X$ carrying a symplectic structure
      $\omega_{X}\in\mathsf{Symp}(X,n)$. By an
      \emph{\textbf{$n$-shifted isotropic structure}} on $f$
      (relative to $ \omega_{X} $), we mean a path $h_{\mathcal{L}}$
      from $0$ to $f^*(\omega_{X})$ in  $\mathcal{A}^{2, cl} ( {L}, n)$.

      An \bfem{$n$-shifted Lagrangian structure} on  $ f $  is
      defined to be an $n$-shifted isotropic structure such that the
      sequence $\mathbb{T}_{{L}}\rightarrow f^*(\mathbb{T}_{{X}})
      \rightarrow \mathbb{L}_{{L}}[n]$ is a homotopy fiber
      sequence\footnote{Equivalently, we can require the  map
        $\chi_{h_{{L}}}:     \mathbb{T}_{{L}/X} \rightarrow
      \mathbb{L}_{{L}}[n-1]$ to be an equivalence.}.  In that case,
      we  say that ${L}$ is \bfem{Lagrangian in $ ({X}, \omega_{X}).$}

    \item A morphism $f: L \to X$ into an $n$-shifted contact stack
      is called \bfem{Legendrian} if it is equipped with a
      null-homotopy $f^*\alpha_X \simeq 0$ which induces a stable
      fiber sequence\footnote{Or, equivalenty, the fiber sequence
        $\mathbb{T}_{{L}} \to f^*(\mathcal{K}) \rightarrow
        \mathbb{L}_{{L}}[n]$. For the equivalence, see
      \cite{IzbudakBerktav2026}. }
      \begin{equation}
        \mathcal{O}_L \longrightarrow \mathbb{L}_{L/X}
        \otimes_{\mathcal{O}_L} f^*\mathcal{L}_X[n-1] \longrightarrow
        \mathbb{T}_L
      \end{equation}
      in the stable $\infty$-category $QCoh(L)$.
  \end{enumerate}

\end{definition}
The theory of shifted contact structures forms the odd-dimensional
counterpart to shifted symplectic stacks, with the line bundle
accounting for the scaling ambiguity in contact geometry. Let us
present a prototype construction and provide a list of foundational
results established so far in the setting of derived contact
geometry, see \cite{kib1,kib2,kib3}. For the following exposition, we
follow \cite{Berktav_indam}.

%\subsection{Recollection of derived contact geometry (DCG) }
% We first briefly recall the definition of shifted contact
% structures and mention the basics of the theory. Afterward, we
% discuss more recent work and several applications to moduli theory
% and physics.

%\subsection{Foundations for shifted contact structures}

%Let us present a prototype construction and provide a list of
% foundational results established so far in the setting of derived
% contact geometry, see \cite{kib1,kib2,kib3}. For the following
% exposition, we follow \cite{Berktav_indam}.

\begin{example}[Prototype Construction] \label{model example}
  Let $n=-2\ell -1$ for  $\ell\in \N$. There exists a canonical
  (strict) {$n$-shifted contact structure} on $X:=\spec A$ induced by
  $\ker \alpha$,  where the pair $(A,\alpha)$ is defined\footnote{The
    example is in fact a variation on the symplectic case \cite[Example
  5.8]{Brav}.} below. See \cite[Example 3.10]{kib1} or \cite[Example
  2.21]{kib2} for details.

  \begin{enumerate}
    \item  As a \bfem{commutative graded algebra}, $A$ is  defined to
      be the free graded $\K$-algebra over $A(0)$ generated by the variables
      \begin{align}
        & x_1^{-i}, x_2^{-i}, \dots, x_{m_i}^{-i}& &\text{ in degree
        } -i \ \ \ \text{ for } i= 1,  \dots, \ell, \nonumber  \\
        & y_1^{n+i}, y_2^{n+i}, \dots, y_{m_i}^{n+i}& & \text{ in
        degree } n+i \ \text{ for } i=1,\dots, \ell,  \nonumber \\
        & z^n, y_1^{n}, y_2^{n}, \dots, y_{m_0}^{n}& & \text{ in degree } n.
      \end{align} Here, $m_1,\dots, m_{\ell}$ are some non-negative
      integers and $ A(0) $ is a smooth $ \K $-algebra  of dimension
      $ m_0 $, with degree 0 variables $x_1^0, x_2^0, \dots,
      x_{m_0}^0$ in $A(0)$ defining global \'{e}tale coordinates
      $(x_1^0, x_2^0, \dots, x_{m_0}^0): \spec A(0) \rightarrow
      \mathbb{A}^{m_0}$ on $\spec A(0)$ such that
      $d_{dR}x_1^0,\dots,d_{dR}x_{m_0}^0$ form a $A(0)$-basis for
      $\Omega_{A(0)}^1$.

    \item The \bfem{internal differential} $d$ on $A$ is determined
      by  the equations
      \begin{align} \label{defn_internal d contact}
        d|_{A(0)}&=0; \ dx_j^{-i} =  \dfrac{\partial H}{\partial
        y_j^{n+i}} \text{ for all } i>0,j; \  \ dy_j^{n+i} =
        \dfrac{\partial H}{\partial x_j^{-i}} \text{ for all } i,j;
        \text{ and } \nonumber \\ -ndz^n&= H+d\Big[\sum_{i,j}
        (-1)^{i} ix_j^{-i} y_j^{n+i} \Big],
      \end{align}    where  $H\in A^{n+1}$ is the
      \emph{Hamiltonian}\footnote{An element in $A^{n+1}$ satisfying
        the  \emph{classical master equation} $ \displaystyle
        \sum_{i=1}^{\ell} \sum_{j=1}^{m_i} \dfrac{\partial H}{\partial
      x_j^{-i}} \dfrac{\partial H}{\partial y_j^{n+i}}=0$.}. Note
      that the condition on $H$ implies  $d^2=0$ on each generator
      \cite{Brav}. Counting generators degreewise, the contributions
      of $x_j^{-i}$ and $y_j^{n+i}$ cancel for $i=1,\dots,\ell$, as
      do those of $x_j^0$ and $y_j^n$, so the distinguished variable
      $z^n$ gives $\vdim(A)=-1$.%$ \displaystyle \sum_{i=1}^{\ell}
      % \sum_{j=1}^{m_i} \dfrac{\partial H}{\partial x_j^{-i}}
      % \dfrac{\partial H}{\partial y_j^{n+i}}=0 \text{ in } A^{n+2}.
      % $ %The condition on $H$ implies  $d^2=0$ on each generator
      % \cite{Brav}. Also, we have  $\vdim(A)=-1$.

    \item  The  \bfem{$n$-contact form} $\alpha \in \Omega_{A}^1[n]$ is given by
      \begin{equation} \label{local contact model}
        \alpha= d_{dR}z^n+ \displaystyle \sum_{i=0}^{\ell}
        \sum_{j=1}^{m_i} y_j^{n+i}d_{dR}x_j^{-i}.
      \end{equation}
  \end{enumerate}
\end{example}
Let us now outline below some
initial results, along with recent/ongoing works, for contact derived
stacks:%recent/ongoing works on shifted contact structures.
\paragraph{Darboux models,  symplectifications, and fancy examples.}
\cite{kib1} presents the contact variation of the result of
Brav-Bussi-Joyce \cite[Theorem 5.18]{Brav}, leading to Darboux-type
local models for shifted contact derived schemes. More precisely, we have:
\begin{theorem}[Derived contact Darboux theorem]\label{contact darboux}
  Every negatively shifted contact (locally finitely presented)
  derived $\K$-scheme $X$  is locally equivalent to $(\spec A,
  \alpha)$ for $A$ a (minimal) standard form cdga and $\alpha$ in
  standard contact Darboux form. See \cite[Theorem 3.13]{kib1}
\end{theorem}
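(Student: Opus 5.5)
The plan is to adapt the Brav--Bussi--Joyce proof of the shifted symplectic Darboux theorem \cite[Theorem 5.18]{Brav} to the contact setting, working Zariski (or étale) locally on $X$. The first step reduces the twisting data to the untwisted case. Let $(X,\alpha_X,\cL_X[n])$ be the contact structure, with $n<0$. By Remark \ref{rmk:twisted d_dR} we may shrink to an affine open $\spec A\subset X$ on which $\cL_X\simeq\cO$. After fixing such a trivialization, $\alpha_X$ becomes an honest $1$-form $\alpha\in\mathcal{A}^1(\spec A,n)$. The contact condition then says that $d_{dR}\alpha$ is non-degenerate on $\mathcal{K}=\mathrm{fib}(\alpha^\vee:\TT\to\cO[n])$. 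Next, by \cite[Theorem 4.1]{Brav}, we choose a minimal standard form cdga $A$ near a point $p$. Minimality means the differential vanishes on generators modulo the augmentation ideal at $p$, so the graded generators of $A$ give a basis of $\LL_A|_p$.

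The second step builds Darboux-type coordinates. The fiber sequence $\mathcal{K}\to\TT\to\cO[n]$, together with the equivalence $\mathcal{K}\simeq\mathcal{K}^\vee[n]$ induced by $d_{dR}\alpha$, splits $\TT_A|_p$ as $\K[-n]\oplus(\text{a symplectic part})$. This forces the count of generators used in Example \ref{model example}. For $i\geq 0$ the degree $-i$ generators pair with the degree $n+i$ generators. One extra degree $n$ generator $z^n$ is dual to the Reeb direction, which is why $\vdim=-1$. The parity of $n$ should matter as in \cite{Brav}. When $n=-2\ell-1$ is odd, the middle degree $n/2$ is not an integer, so no self-paired generators occur. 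When $n$ is even, one has to handle the middle block separately, exactly as in the symplectic case.

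The third step, which is the technical core, uses the non-degeneracy at $p$ to change generators after localizing. The goal is to bring $\alpha$ to the form \eqref{local contact model} up to a $d$-exact correction. I would first normalize the $2$-form $d_{dR}\alpha$ restricted to $\mathcal{K}$ using the Brav--Bussi--Joyce argument. Here one uses that closed forms of negative degree are exact in the weight-$2$ part on standard form cdgas, so the closed structure is determined by its $1$-form part up to homotopy. The key input is \cite[Proposition 5.7]{Brav}-type acyclicity: for $n<0$, $H^{n}$ of $\Omega^1$ has no obstruction to writing $\alpha=\dR\phi+\sum y\,\dR x$ plus $d$ of something. One then sets $z^n$ to be the component of $\phi$ in degree $n$, modified by the correction term $\sum(-1)^i i\,x_j^{-i}y_j^{n+i}$.

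The fourth step defines the Hamiltonian $H$ as the unique element forced by $d\alpha=0$. This gives the relations \eqref{defn_internal d contact} and the classical master equation, with $-n\,dz^n=H+d[\cdots]$ following from an Euler-vector-field (weight) argument.

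I expect the main obstacle to be this third step. There one must show that $\alpha$ itself, and not just its de Rham differential, can be put in normal form, including the $\dR z^n$ term. A naive symplectic Darboux argument on $d_{dR}\alpha$ only controls $\alpha$ up to closed $1$-forms. My first attempt would be to pass to the symplectification $\widetilde{X}$ of Notation \ref{notn:twisted}. There the weight-$1$ form $t\,\alpha$ is an exact $(n)$-shifted symplectic potential. I would apply the BBJ Darboux theorem $\Gm$-equivariantly to it, and then take the weight-$1$ basic part to descend the Darboux coordinates back to $X$.
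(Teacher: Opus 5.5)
\noindent\textbf{Overall verdict.} Your outline is the same route the paper points to, namely \cite[Theorem 3.13]{kib1}, the contact variant of \cite[Theorem 5.18]{Brav}: trivialize $\cL_X$ on an affine open, pass to a standard form cdga minimal at $p$, read off the generator pattern from non-degeneracy at $p$, normalize, and extract $H$ from $d\alpha=0$. There is, however, a genuine gap. The step that carries all the content, your Step 3, is not carried out, and the tools you name miss the actual difficulty.

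\medskip
\noindent\textbf{Why Step 3 does not work as stated.}
\begin{enumerate}
  \item Since $\alpha$ is $d$-closed, $d_{tot}\alpha=(d_{dR}\alpha,0,0,\dots)$ is already strict and exact. So the reduction you borrow from \cite[Proposition 5.7]{Brav} gives nothing new here.
  \item The real obstruction is that $d_{dR}\alpha$ is \emph{degenerate} on $\TT_A$: its kernel at $p$ is the Reeb line. BBJ's change-of-generators argument needs non-degeneracy on all of $\TT_A$, so it cannot be run on $d_{dR}\alpha$. Likewise, ``normalizing $d_{dR}\alpha|_{\mathcal{K}}$'' has no meaning at the level of generators of $A$ until a complement of the Reeb direction is itself spanned by generators.
  \item Your recipe for $z^n$ is wrong. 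If $\alpha=d_{dR}\phi+\sum y\,d_{dR}x$, then $z^n=\phi$. Adding $\sum(-1)^i i\,x_j^{-i}y_j^{n+i}$ would destroy the normal form; that term belongs only to the identity $-n\,dz^n=H+d[\cdots]$.
  \item The fallback through the symplectification presupposes a homogeneous, $\Gm$-equivariant Darboux theorem for a symplectic form of weight $1$. BBJ's charts come from Zariski localizations and choices of generators that need not be homogeneous. That is a further theorem to prove, not an application of \cite{Brav}.
\end{enumerate}

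\medskip
\noindent\textbf{The missing idea: split off the Reeb direction by generators before invoking BBJ.} One way to do this:
\begin{enumerate}
  \item Non-degeneracy on $\mathcal{K}$ forces $\alpha|_p\neq 0$. Otherwise $H^*(\mathcal{K}|_p)\to H^*(\TT_A|_p)$ is not injective, and $d_{dR}\alpha|_{\mathcal{K}}$ factors through it.
  \item Hence near $p$ there is a Reeb derivation $R$ with $\alpha(R)=1$ and $\iota_R d_{dR}\alpha=0$. A priori $\iota_Rd_{dR}\alpha=c\,\alpha$, but the coefficient $c$ would lie in $A^{-n}=0$.
  \item $R$ has degree $-n>0$, so it kills $A(0)$. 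Together with non-degeneracy, $d\alpha=0$ gives $[d,R]=0$ and $[R,R]=0$.
  \item By minimality some degree-$n$ generator $z$ has $R(z)\in A(0)$ a unit near $p$; rescale $z$ so that $R(z)=1$.
  \item The substitution $g\mapsto\sum_k(-z)^kR^k(g)/k!$ is a finite sum by degree. It is a triangular change of generators after which $R=\partial/\partial z$.
  \item Then $A\simeq B[z]$ for a sub-cdga $B$ with $dz\in B^{n+1}$, and $\alpha=d_{dR}z+\bar\beta$ with $\bar\beta\in\Omega^1_B$. The pair $(\bar\beta,\pm dz)$ is exactly BBJ's exactness data $(\phi,H)$ for the non-degenerate form $d_{dR}\bar\beta$ on $\spec B$.
  \item Running BBJ's argument on $B$ puts $d_{dR}\bar\beta$ in the form $\sum d_{dR}y\,d_{dR}x$.
  \item The discrepancy $\bar\beta-\sum y\,d_{dR}x$ is a $d_{dR}$-closed $1$-form of internal degree $n<0$. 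It is therefore $d_{dR}$-exact by the Euler homotopy on the negatively graded generators. Absorbing its primitive into $z$ yields \eqref{local contact model}.
\end{enumerate}
Only after this does your Step 4 apply, with $H$ read off from $\iota_d\alpha$ via Cartan's formula.
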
Note that if the shift is odd, the data $(A, d, \alpha)$
can be explicitly described by the  graded variables as in Example
\ref{model example}.

%Let $ X$ be a (locally finitely presented) derived $\K$-scheme
% carrying an $n$-shifted contact structure,  with $n<0$. Then we
% define: \begin{definition}
%The \emph{symplectification} $ \mathcal{S}_{{ {X}}} $ of $X$ is
% defined to be the total space $\widetilde{ L}$ of the
% $\mathbb{G}_m$-bundle of $L$ over $X$.
%\end{definition}
\begin{theorem}[Symplectifications] \label{thm_Symplectization}
  Let $ X$ be a (locally finitely presented) derived $\K$-scheme
  carrying an $n$-shifted contact structure,  with $n<0$. The
  \emph{symplectification} $ \mathcal{S}_{{ {X}}} $ of $X$ is defined
  to be the total space $\widetilde{ L}$ of the $\mathbb{G}_m$-bundle
  of $L$ over $X$. $\mathcal{S}_{{X}}$ is a derived stack equipped
  with an $n$-shifted symplectic form $  \omega_{{X}},$ which is
  canonically determined by the shifted contact structure of $X$, and
  for which the $\mathbb{G}_m$-action is of weight $1$. See
  \cite[Theorem 4.7]{kib1}.
\end{theorem}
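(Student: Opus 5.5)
The plan is to build the symplectification concretely: write down a closed $2$-form on the total space $\widetilde{L}$ of the $\Gm$-bundle $p_X:\widetilde{L}\to X$, check that it is non-degenerate, and check that the $\Gm$-action scales it with weight $1$. Let $t$ denote the fiber coordinate of weight $1$, defined locally after trivializing $\cL$. The weight dictionary of Notation \ref{notn:twisted} identifies the twisted contact form $\alpha_X\in\mathcal{A}^1(X,\cL,n)$ with a basic weight-$1$ form $\lambda:=t\,p_X^*\alpha_X\in\mathcal{A}^1(\widetilde{L},n)$. This $\lambda$ is globally well defined on $\widetilde{L}$, because under a change of trivialization $t\mapsto u\,t$ and $\alpha_X\mapsto u^{-1}\alpha_X$.

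We then set $\omega_X:=d_{tot}\lambda=(d_{dR}\lambda,0,0,\dots)\in\mathcal{A}^{2,cl}(\widetilde{L},n)$, which is closed and even exact. Since $\rho^*\lambda\simeq z\cdot\pi^*\lambda$ and $d_{dR}$ commutes with pullback, the action on $\omega_X$ is also of weight $1$. This step is formal. Note that $\omega_X$ depends only on the contact data, since it is built from $\alpha_X$ alone; this gives canonicity.

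The main step is non-degeneracy, i.e.\ that $\omega_X^\flat:\TT_{\widetilde{L}}\to\LL_{\widetilde{L}}[n]$ is an equivalence. Since this can be checked étale-locally, I would use Theorem \ref{contact darboux}. Locally, $X\simeq\spec A$ with $A$ a minimal standard form cdga, and $\alpha$ is in Darboux form (for odd $n$, as in Example \ref{model example}). Over such a chart, $\widetilde{L}\simeq\spec A[t,t^{-1}]$ and
\[
d_{dR}\lambda = d_{dR}t\wedge\alpha + t\,d_{dR}\alpha .
\]
I would then split the tangent complex via the fiber sequence $p_X^*\mathcal{K}_X\to p_X^*\TT_X\to p_X^*\cL_X[n]$, together with the vertical direction $\partial_t$. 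On $p_X^*\mathcal{K}_X$ the form restricts to $t\,d_{dR}\alpha|_{\mathcal{K}}$, which is non-degenerate by the contact condition, and $t$ is invertible. The pair consisting of $\partial_t$ and the Reeb-type direction (dual to $d_{dR}z^n$) is paired non-degenerately by $d_{dR}t\wedge\alpha$. A filtration/five-lemma argument on the resulting block-triangular map then gives the equivalence.

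Concretely, in coordinates one can apply the change of variables $Y_j^{n+i}=t\,y_j^{n+i}$ to put $\omega$ into a Brav--Bussi--Joyce symplectic Darboux form in the variables $(t,z^n,x,Y)$, with matching degrees. This is where I expect the main obstacle. The variables $t$ (degree $0$) and $z^n$ (degree $n$) are paired by $\omega$ with total shift $n$, which is consistent. What needs care is that the local equivalences glue: the forms $\lambda$ agree on overlaps by construction, and non-degeneracy is a local property, so gluing should follow. The remaining subtlety is that $\widetilde{L}$ is a derived stack rather than a scheme when $\cL$ is twisted, but it is a $\Gm$-torsor over a scheme, so it is in fact a derived scheme, and the local check suffices.
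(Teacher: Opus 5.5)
The paper gives no proof of this statement. It is quoted from \cite[Theorem 4.7]{kib1} and used only as input later, so there is nothing in the paper to compare your argument against step by step.

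Your proposal is a correct, self-contained proof. The step that carries the weight is the filtration argument, and it suffices on its own.

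\begin{itemize}
\item \emph{Filtrations.} Filter the source by $\TT_{\widetilde{L}/X}\subset \mathrm{fib}\big(\TT_{\widetilde{L}}\to p_X^*\cL[n]\big)\subset \TT_{\widetilde{L}}$. Filter the target by $\cO\cdot\lambda\subset p_X^*\LL_X[n]\subset \LL_{\widetilde{L}}[n]$, using the dual of the contact fiber sequence and the tautological trivialization of $p_X^*\cL$.
\item \emph{Compatibility.} The map $\omega_X^\flat$ respects these filtrations. The middle step uses the null-homotopy of $\alpha_X^\vee|_{\mathcal{K}_X}$.
\item \emph{Graded pieces.} Let $E$ be the fundamental vector field of the $\mathbb{G}_m$-action (locally $t\partial_t$). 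On the first piece the map is $E\mapsto\iota_E\omega_X=\lambda$; this holds by Cartan's formula because $\lambda$ is basic of weight $1$. On the middle piece it is $t\cdot(d_{dR}\alpha_X|_{\mathcal{K}_X})^\flat$ on $p_X^*\mathcal{K}_X$, an equivalence by the contact condition. On the last piece it is $\pm$ the canonical identification $p_X^*\cL[n]\simeq\LL_{\widetilde{L}/X}[n]$.
\end{itemize}

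This argument uses neither Darboux charts nor the hypothesis $n<0$. In particular it covers even $n$, where Example \ref{model example} (which you quote for the local form) does not apply. It also transfers unchanged to derived Artin stacks. As a result, your change of variables $Y^{n+i}_j=t\,y^{n+i}_j$ into Brav--Bussi--Joyce form becomes an optional explicit normal form rather than a needed step. The gluing you flag as the main obstacle is a non-issue, as you yourself observe: $\lambda$ and $\omega_X$ are global from the start, and being an equivalence is a local property.

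One small correction concerns the weight claim. Starting from $\rho^*\lambda\simeq z\cdot\pi^*\lambda$ and commuting $d_{dR}$ with the absolute pullback $\rho^*$ gives $\rho^*\omega_X=z\cdot\pi^*\omega_X+d_{dR}z\wedge\pi^*\lambda$ on $\mathbb{G}_m\times\widetilde{L}$, not $z\cdot\pi^*\omega_X$. Indeed $z\cdot\pi^*\omega_X$ is not even a closed form there. For $2$-forms the weight condition has to be read relative to $\mathbb{G}_m$, equivalently as $g^*\omega_X=g\cdot\omega_X$ for every point $g$ of $\mathbb{G}_m$. Read that way, your argument is correct.
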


It has been shown in  \cite{kib2} that Theorem \ref{contact darboux}
and Theorem \ref{thm_Symplectization} still hold for negatively
shifted contact derived Artin $\K$-stacks  locally of finite presentation.

In addition to standard Darboux constructions, one can construct
interesting examples of contact derived stacks within the framework
of Safronov's shifted geometric quantization \cite{Safronov2023}. In
brief,  \cite[Theorem 1.3]{kib2} establishes:
\begin{theorem}[Further examples]
  \begin{enumerate}
    \item The space $ {J^1[n]{X}:=T^*[n]}{X }\times \mathbb{G}_a[n]$,
      called the \emph{$n$-shifted 1-jet stack} of $X$, carries an
      $n$-shifted contact structure.
    \item Under certain conditions, there is a  $\mathbb{G}_m$-bundle
      on $ {T^*}{X}$ carrying a 0-shifted contact structure.
    \item Under certain conditions, there is a  $\mathbb{G}_m$-bundle
      on the $ c_1(\mathcal{G})$-twisted cotangent stack
      ${T^*}_{c_1(\mathcal{G})}X$ that carries a 0-shifted contact
      structure, where the element $ c_1(\mathcal{G}) \in
      \mathcal{A}^{1,cl}({X}, 1) $ denotes the \emph{characteristic
      class} of a 0-gerbe $\mathcal{G}$  on $X$.
    \item Assume that $ G $ is a simple algebraic group
      over  $ \mathbb{K} $, and $ C $ be a  smooth and proper
      curve$/\mathbb{K} $. Consider the derived moduli stacks
      $LocSys_G(C), Bun_G(C)$ of \emph{flat $G$-connections on $C$},
      \emph{principal $G$-bundles on $C$}. Then there is a
      $\mathbb{G}_m$-bundle on $ LocSys_G(C) $ with a 0-shifted
      contact structure.

  \end{enumerate}
\end{theorem}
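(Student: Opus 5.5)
Of the four items in this theorem, the paper attributes only the first (the $n$-shifted 1-jet stack) to \cite[Theorem 1.3]{kib2} with enough detail to argue directly; items 2--4 are quoted under unspecified ``certain conditions'', so for those I would reduce to Safronov's prequantization data and the symplectification dictionary. In every case the common strategy is the weight dictionary of Notation \ref{notn:twisted}. It suffices to exhibit a line bundle $\cL$ and a $\Gm$-bundle $\widetilde{X}\to X$ carrying an $n$-shifted symplectic form of weight $1$, which is basic and exact with weight-$1$ primitive. One then descends the primitive to a twisted $1$-form $\alpha\in\mathcal{A}^1(X,\cL,n)$. Non-degeneracy of $d_{dR}\alpha$ on $\mathcal{K}_X=\mathrm{fib}(\alpha^\vee)$ is then equivalent to non-degeneracy of the weight-$1$ symplectic form upstairs, which inverts Theorem \ref{thm_Symplectization}.

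For item 1, take $X$ locally finitely presented and set $J^1[n]X=T^*[n]X\times\mathbb{G}_a[n]$ with trivial line bundle. Let $\lambda$ be the tautological Liouville $1$-form on $T^*[n]X$, so that $d_{dR}\lambda$ is the canonical $n$-shifted symplectic form of PTVV. Let $z$ be the coordinate of $\mathbb{G}_a[n]$. I would set
\begin{equation*}
\alpha=d_{dR}z+\mathrm{pr}^*\lambda\in\mathcal{A}^1(J^1[n]X,n).
\end{equation*}
The map $\alpha^\vee:\TT\to\cO[n]$ is split surjective via the $\mathbb{G}_a[n]$ direction. Hence $\mathcal{K}\simeq\mathrm{pr}^*\TT_{T^*[n]X}$, and $d_{dR}\alpha|_{\mathcal{K}}=\mathrm{pr}^*d_{dR}\lambda$ is non-degenerate by PTVV. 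Locally, using Theorem \ref{contact darboux}, this recovers the model of Example \ref{model example} with $H$ pulled back, which serves as a consistency check.

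For items 2 and 3, I take the ``certain conditions'' to be the existence of a prequantum line bundle, i.e.\ a $\Gm$-bundle $P\to T^*X$ (resp.\ $T^*_{c_1(\mathcal{G})}X$) with connection whose curvature is the $0$-shifted symplectic form $\omega$. In item 3 the twisted cotangent stack is symplectic by the standard PTVV/Calaque argument, since $c_1(\mathcal{G})$ is a closed $1$-form of degree $1$. The connection $1$-form $\theta$ on $P$ then plays the role of $\alpha$ with $\cL$ trivial, and $d_{dR}\theta=p^*\omega$. The kernel of $\theta^\vee$ is the horizontal distribution, which is isomorphic to $p^*\TT$, so non-degeneracy follows from that of $\omega$. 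Item 4 is the same argument applied to the $0$-shifted symplectic form on $LocSys_G(C)$ (PTVV via the $2$-shifted form on $BG$), prequantized by the (determinant/Chern--Simons) line bundle provided by Safronov's shifted geometric quantization \cite{Safronov2023}.

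The main obstacle is producing the connection $\theta$ with curvature exactly $\omega$ as closed forms, not merely up to homotopy, in the derived setting. It is also the step I am least sure how to carry out from the material at hand. I would attempt it by realizing the class $[\omega]$ as $\dR\log$ of a $\Gm$-torsor and transporting paths via the equivalence relation on $\mathcal{A}^{2,cl}$; the integrality of that class is presumably what the unspecified ``certain conditions'' in items 2--4 are meant to guarantee.
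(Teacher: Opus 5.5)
The paper gives no proof of this statement. It is quoted from \cite[Theorem 1.3]{kib2} and framed within Safronov's shifted geometric quantization \cite{Safronov2023}, which is also the route you take for items 2--4.

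Your item 1 is correct and complete. $\alpha^\vee$ is the identity on the summand $\TT_{\mathbb{G}_a[n]}\simeq\cO[n]$, so $\mathcal{K}\simeq\mathrm{pr}^*\TT_{T^*[n]X}$ via the graph of $-\lambda^\vee$. Since $d_{dR}\alpha=\mathrm{pr}^*d_{dR}\lambda$ has no $\mathbb{G}_a[n]$-component, its restriction to $\mathcal{K}$ is the PTVV form. (For $n\geq 1$, read $d_{dR}z$ as the canonical degree-$n$ $1$-form on $B^n\mathbb{G}_a$ given by $\LL\simeq\cO[-n]$.)

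Your opening ``common strategy'' paragraph is never used and describes the opposite geometry: a symplectic total space over a contact base, with descent via the weight dictionary. In items 2--4 the $\Gm$-bundle $P$ is itself the contact stack, with trivial contact line bundle, and the base $Y$ is symplectic.

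For items 2--4 there is a gap exactly at the step you flag, and you misdiagnose it. Nothing needs to hold strictly: $\mathcal{K}$ and the map $d_{dR}\theta$ induces on it depend only on $\theta$ and $d_{dR}\theta$ up to paths. Two things are actually missing.

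\emph{(i) Extracting $\theta$.} You need a derived meaning of ``connection'' and a way to get $\theta$ from it. Take the prequantum datum to be a line bundle $L$ on $Y$ together with a path $h\colon c_1(L)\sim\omega$ in $\mathcal{A}^{1,cl}(Y,1)$, where $\omega$ is mapped along $\mathcal{A}^{2,cl}(Y,0)\to\mathcal{A}^{1,cl}(Y,1)$. On the frame bundle $p\colon P\to Y$, the tautological section $t$ of $p^*L$ null-homotopes $p^*c_1(L)$. Concatenated with $p^*h$, this null-homotopes the image of $p^*\omega$. The fiber sequence $\mathcal{A}^1(P,0)\xrightarrow{d_{dR}}\mathcal{A}^{2,cl}(P,0)\to\mathcal{A}^{1,cl}(P,1)$, coming from $\prod_{i\geq 2}DR^i[1]\to\prod_{i\geq 1}DR^i[1]\to DR^1[1]$, then converts this into $\theta\in\mathcal{A}^1(P,0)$ with a path $d_{dR}\theta\sim p^*\omega$.

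\emph{(ii) Vertical normalization.} You assert this only by classical analogy (``the kernel of $\theta^\vee$ is the horizontal distribution''). Restrict to the fiber $\Gm\hookrightarrow P$ over a geometric point $y$. The path $y^*h$ lies in the contractible space $\mathcal{A}^{1,cl}(\mathrm{pt},1)$, so $\theta|_{\Gm}\sim\pm d_{dR}\log t$. Hence $\theta^\vee$ restricted to $\TT_{P/Y}\simeq\cO_P$ (the Euler field) is an equivalence at every geometric point, and so globally by Theorem \ref{thm:HAG_perfect_properties}-(\ref{item:perfect_pointwise}). Only with this do you get $\TT_P\simeq\TT_{P/Y}\oplus\mathcal{K}$ and $\mathcal{K}\xrightarrow{\sim}p^*\TT_Y$. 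Non-degeneracy of $d_{dR}\theta|_{\mathcal{K}}\simeq p^*\omega^\flat$ then follows as you say.

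Integrality is also not what is at stake in item 2. There $\omega_{T^*X}=d_{dR}\lambda$ is exact, so $P=T^*X\times\Gm$ with $\theta=d_{dR}\log t+\mathrm{pr}^*\lambda$ works unconditionally. This is your item-1 computation with $d_{dR}\log t$ in place of $d_{dR}z$, since contracting with the Euler field gives $1$. The hypotheses genuinely matter only in items 3 and 4: there the gerbe $\mathcal{G}$, respectively the simplicity of $G$, is what supplies the prequantum line bundle. Your proposal simply assumes that line bundle in item 3 and imports it from \cite{Safronov2023} in item 4, rather than producing it.
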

Regarding the properties of Legendrians, on the other hand,
\cite{kib3} provides a \emph{Legendrian-Darboux-type} theorem. %the
% Legendrians in contact derived schemes.
\cite{kib3} also shows that the zero section morphism $j: {X}
\rightarrow {J^1[n]}{X}$  carries a  Legendrian structure, extending
the well-known classical result. %\footnote{Notice that this result
% is analogous to the one in classical contact geometry.}.

%\paragraph{The case of smooth stacks.}
In the case of smooth stacks, on the other hand, Maglio,  Tortorella
and Vitagliano \cite{Maglio2024} have recently developed and examined
\emph{$0$-shifted} and \emph{$+1$-shifted contact structures} on
\emph{differentiable stacks}, offering very interesting results and
constructions within the smooth-stacky setting.

\vspace{-0.1in}

\paragraph{Recent developments on DCG and applications.}

In a series of papers
\cite{IzbudakBerktav2026,IzbudakBerktav2,IzbudakBerktav3,
Izbudak_Perverse}, İzbudak and Berktav have recently constructed
several interesting moduli spaces carrying shifted contact structures
and developed tools to establish results extending the classical
contact geometry. There are also results with no classical
counterparts, such as the \textit{Legendrian intersection theorem}
and the \textit{contact AKSZ construction}. In what follows, we
highlight some of these results.
\vspace{-0.1in}

\paragraph{\textit{a. Transversality and derived Legendrian
intersections.}} İzbudak and Berktav have adapted in
\cite{IzbudakBerktav2026} the traditional transversality lemma from
contact geometry to the derived setup and have shown the
\textit{derived Legendrian intersection theorem}, along with numerous
applications, such as the derived geometry of the discriminant loci
of 1-jet bundles and certain moduli problems including projective
Higgs bundles, $\ell$-adic local systems, and Lie 2-groups.

In brief, \cite[Theorem 1.1]{IzbudakBerktav2026} formalizes
\textit{derived symplectification/contactification} and proves that taking the
quotient of a homogeneous symplectic space descends the symplectic
data to a contact structure, avoiding
a transverse hypersurface. More precisely, using the geometry of $\Gm$-torsors
and symplectification, \cite{IzbudakBerktav2026} establishes the
following key result.

\begin{theorem}[Derived Analogue of Classical Transversality]
  \label{thm:derived transversality}
  Given an $n$-shifted symplectic derived Artin stack $(\widetilde{X},
  \omega_{\widetilde{X}})$
  equipped with a $\Gm$-action of weight 1,
  the stack quotient $[\widetilde{X} / \Gm]$ inherits an
  $n$-shifted contact structure.
\end{theorem}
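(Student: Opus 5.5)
The plan is to construct the contact data on $X:=[\widetilde{X}/\Gm]$ in three steps, using the weight dictionary of Notation \ref{notn:twisted} read in reverse. Write $q:\widetilde{X}\to X$ for the quotient map; it is a principal $\Gm$-bundle. Let $\cL_X$ be the associated line bundle, so that $q_*\cO_{\widetilde{X}}\simeq\bigoplus_w \cL_X^{\otimes w}$. Then:
\begin{enumerate}
\item produce a weight $1$ primitive $\lambda$ of $\omega_{\widetilde{X}}$;
\item show that $\lambda$ is basic, so it descends to a twisted $1$-form $\alpha_X\in\mathcal{A}^1(X,\cL_X,n)$;
\item check that $d_{dR}\alpha_X$ is non-degenerate on $\mathcal{K}_X$.
\end{enumerate}

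\textbf{Primitive.} Let $E$ be the Euler vector field of the $\Gm$-action, i.e.\ the infinitesimal generator $\cO_{\widetilde{X}}\to\TT_{\widetilde{X}}$. Weight $1$ means $\mathcal{L}_E\,\omega_{\widetilde{X}}\simeq\omega_{\widetilde{X}}$. Cartan's formula holds in $DR(\widetilde{X})$ for the mixed structure, so $\mathcal{L}_E=[d_{tot},\iota_E]$. Hence $\lambda:=\iota_E\omega_{\widetilde{X}}$ satisfies $d_{tot}\lambda\simeq\omega_{\widetilde{X}}$ (the leading component), with the higher components handled by the same homotopy. Because $E$ is $\Gm$-invariant, $\lambda$ is again of weight $1$. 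This gives a $\Gm$-equivariant exact primitive, without any separate exactness hypothesis.

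\textbf{Descent.} The transitivity sequence for $q$ reads
\[
q^*\LL_X\to\LL_{\widetilde{X}}\to\LL_{\widetilde{X}/X}\simeq\cO_{\widetilde{X}},
\]
where the last map is contraction with $E$. Since $\iota_E\lambda=\omega(E,E)=0$ by antisymmetry, made coherent through the homotopy witnessing it, $\lambda$ lifts to $q^*\LL_X[n]$. Combined with weight $1$, this means $\lambda$ is basic of weight $1$. The dictionary then yields $\alpha_X$ with $q^*\alpha_X\simeq\lambda$, and we set
\[
\mathcal{K}_X:=\mathrm{fib}\bigl(\alpha_X^\vee:\TT_X\to\cL_X[n]\bigr).
\]

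\textbf{Non-degeneracy.} The identification is
\[
q^*\TT_X\simeq\mathrm{cofib}\bigl(\cO\cdot E\to\TT_{\widetilde{X}}\bigr).
\]
The key observation is that $\alpha_X^\vee$ pulled back is $v\mapsto\omega(E,v)$. So $q^*\mathcal{K}_X$ is the $\omega$-orthogonal of $E$ taken modulo $E$, that is, the symplectic reduction of the pair $\langle E\rangle\subset E^{\perp}$. Non-degeneracy of $\omega^\flat$ shows that $\omega$ induces an equivalence from this reduction to its dual twisted by $[n]$, and that induced pairing is exactly $d_{dR}\alpha_X|_{\mathcal{K}_X}$, because $d\lambda=\omega$. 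The equivalence is equivariant of weight $1$, so it descends to $\mathcal{K}_X\xrightarrow{\sim}\mathcal{K}_X^\vee\otimes\cL_X[n]$. Equivalences are detected after the faithfully flat pullback $q^*$, which finishes the argument.

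\textbf{Main obstacle.} I expect the hardest step to be carrying out the fiber and cofiber bookkeeping coherently in the stable $\infty$-category. Concretely, one must show that the fiber of $\omega(E,-)$ modulo $E$ is self-dual up to the twist, which needs an octahedral-axiom argument with $\omega^\flat$ intertwining $\TT\to\cO$ and $\cO\to\LL[n]$. My first attempt would be to verify this locally in a BBJ-type Darboux chart for $\widetilde{X}$ chosen $\Gm$-equivariantly, where it reduces to the computation of Example \ref{model example}, and then glue the local equivalences.
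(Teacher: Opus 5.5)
Your argument is correct and matches the route the paper indicates for this cited result (\cite[Theorem 3.7]{IzbudakBerktav2026}, described in Step 3 of the proof of Theorem \ref{thm: proof_thurston in dcg}(a)): the contact form is the descent, through the weight dictionary, of the weight $1$ primitive $\iota_E\omega_{\widetilde{X}}$ obtained from Cartan's formula, and non-degeneracy is the self-duality of the reduction $E^{\perp}/E$, which the octahedral argument you outline already proves globally. The equivariant Darboux-chart fallback is therefore unnecessary, and it would not be available anyway for non-negative shifts or without adapting it to Artin stacks.
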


Furthermore, DCG links Legendrian structures to Lagrangian structures
via $\Gm$-equivariant lifts along the symplectification projection.
In this regard, \cite[Theorem 1.2]{IzbudakBerktav2026} proves the
derived Legendrian intersection theorem via base change and an
$\infty$-categorical descent cube, along with a certain lifting
property arising from the symplectic-contact dictionary. More
precisely, we have:
%Lifting intersection data to the symplectic level yields:

\begin{theorem}[Legendrian Intersection Theorem] \label{thm:derived
  Legendrian intersection}
  Intersection of two Legendrians $f_1 \colon L_1 \to X$ and $f_2
  \colon L_2 \to X$ in an $n$-shifted contact derived Artin stack admits
  an $(n-1)$-shifted  contact structure.
\end{theorem}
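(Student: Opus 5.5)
We reduce the intersection problem for Legendrians to the Lagrangian intersection theorem of PTVV on the symplectification, carry out the symplectic construction $\Gm$-equivariantly, and then descend back to the contact level using Theorem \ref{thm:derived transversality}.

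\emph{Lifting the Legendrians.} Let $(X,\alpha_X,\cL_X[n])$ be the contact stack and $p_X:\widetilde{X}\to X$ the principal $\Gm$-bundle of $\cL_X$. By Theorem \ref{thm_Symplectization}, $\widetilde{X}$ carries an $n$-shifted symplectic form $\omega_{\widetilde{X}}$ of weight $1$; via the weight dictionary of Notation \ref{notn:twisted}, it is $d_{tot}$ of the tautological weight-one $1$-form $\lambda$ corresponding to $\alpha_X$. For $i=1,2$, set $\widetilde{L}_i := L_i\times_X\widetilde{X}$, which is the $\Gm$-bundle of $f_i^*\cL_X$, and let $\widetilde{f}_i:\widetilde{L}_i\to\widetilde{X}$ be the induced $\Gm$-equivariant map. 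The null-homotopy $f_i^*\alpha_X\simeq 0$ pulls back to a weight-one null-homotopy $\widetilde{f}_i^*\lambda\simeq 0$, and applying $d_{tot}$ to it gives an isotropic structure on $\widetilde{f}_i$.

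\emph{Non-degeneracy of the lifts.} We must show that each isotropic structure is Lagrangian. On $\widetilde{X}$ there is a splitting $\TT_{\widetilde{X}}\simeq p_X^*\mathcal{K}_X\oplus(\text{Euler line})\oplus(\text{Reeb line})$, and similarly for $\widetilde{L}_i$. Comparing the resulting sequence $\TT_{\widetilde{L}_i}\to\widetilde{f}_i^*\TT_{\widetilde{X}}\to\LL_{\widetilde{L}_i}[n]$ with the Legendrian fiber sequence of Definition \ref{defn_isotropic Lag}(2), pulled back along $\widetilde{L}_i\to L_i$, shows that the former is a fiber sequence. In this comparison the extra $\cO$ and $\TT$ terms are exactly matched by the Euler and Reeb directions. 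The statement can then be checked smooth-locally via the contact Darboux models of Theorem \ref{contact darboux}, extended to stacks as in \cite{kib2}.

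\emph{Intersecting and descending.} By the PTVV Lagrangian intersection theorem, $\widetilde{L}_1\times_{\widetilde{X}}\widetilde{L}_2$ carries an $(n-1)$-shifted symplectic form, obtained as the loop formed by concatenating the two isotropic homotopies. Since every structure above is $\Gm$-equivariant of weight $1$, this form has weight $1$ for the diagonal $\Gm$-action. Base change along the $\Gm$-torsor gives
\[
[\widetilde{L}_1\times_{\widetilde{X}}\widetilde{L}_2/\Gm]\simeq L_1\times_X L_2,
\]
and Theorem \ref{thm:derived transversality} then yields an $(n-1)$-shifted contact structure on $L_1\times_X L_2$, with contact line bundle the restriction of $\cL_X$.

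\emph{Main obstacle.} The hardest step is the non-degeneracy of the lifts. Matching the twisted Legendrian fiber sequence, with its shift $[n-1]$ and its $\cO_L$ term, to the Lagrangian condition upstairs requires controlling the Euler and Reeb directions compatibly with the equivariant homotopies, not merely on cohomology. I would first try to settle it locally in Darboux charts and then glue by descent, using that both conditions are local and invariant under equivalence.
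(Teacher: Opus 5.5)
Your proposal is correct and follows the route the paper describes when it cites \cite[Theorem 1.2]{IzbudakBerktav2026}: lift $f_1,f_2$ to $\Gm$-equivariant Lagrangians in the symplectification, form the weight-$1$ $(n-1)$-shifted symplectic derived intersection $\widetilde{L}_1\times_{\widetilde{X}}\widetilde{L}_2$, identify its $\Gm$-quotient with $L_1\times_X L_2$ by base change, and descend via Theorem~\ref{thm:derived transversality}. In the lifting step, two of your tools have limited reach: the splitting of $\TT_{\widetilde{X}}$ into $p_X^*\mathcal{K}_X$ plus Euler and Reeb lines exists only locally (globally, splitting off the Euler line needs a connection on $\cL_X$), and Darboux charts only cover $n<0$; it is cleaner to check the Lagrangian condition over opens trivializing $\cL_X$, where it becomes the equivalent Legendrian sequence $\TT_{L_i}\to f_i^*\mathcal{K}_X\to\LL_{L_i}\otimes f_i^*\cL_X[n]$ (footnote to Definition~\ref{defn_isotropic Lag}) plus an Euler--Reeb block that is a fiber sequence because $\omega_{\widetilde{X}}$ pairs those two lines perfectly, and this works for every $n$.
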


\begin{figure}[htbp]
  \centering
  \begin{tikzpicture}[scale=0.7]
    \node[fill=white, inner sep=2pt] (tZ) at (0,0) {$\widetilde{L}_1
    \times^h_{\widetilde{X}} \widetilde{L}_2$};
    \node[fill=white, inner sep=2pt] (tL1) at (-2.5, 1.5) {$\widetilde{L}_1$};
    \node[fill=white, inner sep=2pt] (tL2) at (2.5, 1.5) {$\widetilde{L}_2$};
    \node[fill=white, inner sep=2pt] (tX) at (0, 3) {$\widetilde{X}$};

    \node[fill=white, inner sep=2pt] (Z) at (2, -3) {$L_1 \times^h_X L_2$};
    \node[fill=white, inner sep=2pt] (L1) at (-0.5, -1.5) {$L_1$};
    \node[fill=white, inner sep=2pt] (L2) at (4.5, -1.5) {$L_2$};
    \node[fill=white, inner sep=2pt] (X) at (2, 0) {$X$};

    \draw[->, thick, dashed, preaction={draw, white, line width=4pt}]
    (tX) -- node[pos=0.3, left] {$/\Gm$} (X);
    \draw[->, thick] (tZ) -- (tL1);
    \draw[->, thick] (tZ) -- (tL2);
    \draw[->, thick] (tL1) -- (tX);
    \draw[->, thick] (tL2) -- (tX);

    \draw[->, thick, dashed] (tL1) -- (L1);
    \draw[->, thick, dashed] (tL2) -- (L2);

    \draw[->, thick, preaction={draw, white, line width=4pt}] (Z) -- (L1);
    \draw[->, thick, preaction={draw, white, line width=4pt}] (Z) -- (L2);
    \draw[->, thick, preaction={draw, white, line width=4pt}] (L1) -- (X);
    \draw[->, thick, preaction={draw, white, line width=4pt}] (L2) -- (X);
    \draw[->, thick, dashed] (tZ) -- node[pos=0.7, left] {$ $} (Z);
  \end{tikzpicture}
  \caption{\small The homotopy limit diagram. The upper plane is the
    derived intersection of the $\Gm$-equivariant Lagrangian lifts in
    the derived symplectification $\widetilde{X}$. The dashed arrows
    denote the descent via the principal $\Gm$-bundle quotient, giving
  the derived Legendrian intersection in the contact base $X$.}
\end{figure}
It is important to note that \textit{this lifting property translates
  intersection problems in contact moduli to equivariant intersection
problems in symplectic moduli}. This establishes the desired shifted
contact structure on the derived intersection of two Legendrian stacks.
\vspace{-0.1in}

\paragraph{\textit{b. Legendrian correspondences and Derived
Legendrian Category.}} The companion paper \cite{IzbudakBerktav2}
constructs the derived Legendrian category $\mathcal{F}_{c}(X)$ for
an $n$-shifted contact derived Artin stack $X$ and the
$(\infty,2)$-category $Leg_n$ of Legendrian correspondences in the
context of derived algebraic geometry, with several applications to
moduli theory.

Regarding applications, \cite[Corollary 1.3]{IzbudakBerktav2} in fact
shows that the derived Legendrian category $\mathcal{F}_{c}(X)$
encodes geometric and algebraic structures on certain derived moduli
spaces in the sense that the composition operations, endomorphism
algebras, and functoriality define the non-commutative and shifted
geometry of these spaces.

As ongoing work, our construction of the derived Legendrian category
$\mathcal{F}_{c}(X)$ and the formulation of topological cobordisms as
Legendrian spans will in fact serve as the necessary setup to define
surgery operations directly on contact stacks.
\vspace{-0.1in}

\paragraph{\textit{c. Contact AKSZ construction.}} The AKSZ
construction provides a formalism in derived algebraic geometry for
generating shifted geometric structures on derived mapping stacks.
Pantev, To\"{e}n, Vaqui\'{e}, and Vezzosi introduced this
construction for shifted symplectic structures and proved that the
derived mapping stack of a $d$-oriented stack into an $n$-shifted
symplectic stack admits an $(n-d)$-shifted symplectic structure
\cite[Theorem 2.1]{PTVV}. As a contact analogue of this result,
\cite{IzbudakBerktav3} establishes the AKSZ theorem for shifted
contact structures and its applications.

As one of key results, \cite[Theorem A]{IzbudakBerktav3} proves that
if $X$ is an $n$-shifted contact derived Artin stack and $Y$ is an
$\cO$-compact, $d$-oriented derived stack, the quotient mapping stack
$$[\Map(Y, \widetilde{X})/\Gm]$$ admits an $(n-d)$-shifted contact
structure. The companion paper \cite{IzbudakBerktav3}  also discusses
applications of this quotient mapping stack formalism, where the
derived analogues of specific topological field theories -- the
Jacobi, Courant-Jacobi, and Loop Space Sigma Models-- have been described.
\vspace{-0.1in}

\paragraph{\textit{d. Equivariant Contact Darboux Quotients.}}
İzbudak establishes in  \cite{Izbudak_Perverse} the
\textit{Equivariant Contact Darboux Theorem}, verifying that
$(-1)$-shifted contact stacks are locally geometric quotients of
derived discriminant loci. %This paper transports the equivariant
% Darboux theorem and the BBDJS perverse sheaf formalism into the
% context of derived contact geometry. By establishing the
% equivariant contact Darboux theorem, we prove that a $-1$-shifted
% contact stack admits a smooth derived Darboux atlas, which is
% locally equivalent to a geometric quotient stack in the presence of
% stabilizing groups in the smooth topology.
This approach is employed to linearize the derived Legendrian
categories and to define enumerative invariants, thereby connecting
contact moduli spaces to microlocal sheaf theory.
%===================================
%===================================
%===================================
\section{Relative structures}
\subsection{Twisted exact symplectic  fibrations} \label{sec:Relative structure}

The framework of PTVV's symplectic geometry \cite{PTVV} can be extended to the
relative setting, which is manifested by the morphisms of derived
stacks. The key is to adapt the de Rham algebra via the morphism
data. Given a morphism $\pi: X \rightarrow S$ of derived Artin
stacks, we can define the \bfem{relative de Rham algebra $DR(X/S)$}
as a graded commutative dg algebra
\begin{equation}
  DR(X/S)\simeq \Gamma(\mathrm{Sym}_{\mathcal{O}_X}(\mathbb{L}_{X/S}[-1])),
\end{equation} where $ \mathbb{L}_{X/S} $ is the \emph{relative
cotangent complex} defined via the cofiber sequence
$\pi^*\mathbb{L}_S \rightarrow \mathbb{L}_X \rightarrow \mathbb{L}_{X/S}.$
%When $\pi$ is a map of derived affine schemes $\spec A \rightarrow
% \spec B$ coming from a cofibration $B\rightarrow A$ of cofibrant
% cdgas, we have \begin{equation}
%DR(X/S)=DR(A/B) \simeq \mathrm{Sym}_A (\Omega^1_{A/B} [-1]).
%\end{equation}
Following \cite{CPTVV,Safronov2023}, we then have the following
relative concepts:
\begin{definition}

  \begin{enumerate}[\upshape(a)]
    \item   A \bfem{relative $p$-form of degree $n$ on $\pi: X
      \rightarrow S$} is a $d$-closed element of $DR(X/S)$ of weight
      $p$ and cohomological degree $p+n$.  Denote the space of such
      forms by $\mathcal{A}^p(X/S,n)$.

    \item     A \bfem{closed relative $p$-form of degree $n$ on $\pi:
      X \rightarrow S$} is a $(d+\dR)$-closed element
      $\omega_p+\omega_{p+1} + \cdots$ of $DR(X/S)$ such that
      $\omega_q$ is a $q$-form of degree $n-(q-p)$.  Denote the space
      of such forms by $\mathcal{A}^{p,cl}(X/S,n)$.
  \end{enumerate}

\end{definition}
Let us outline key properties of these relative concepts that extend
the absolute ones. Suppose $\pi: X \rightarrow S$ is a morphism of
derived Artin $\K$-stacks. We first notice that there exists a
null-homotopic sequence $DR(S) \xrightarrow{\pi^*} DR(X)
\xrightarrow{\cdot/S} DR(X/S)$ such that the quotient morphism $DR(X)
\rightarrow DR(X/S)$ factors as $$DR(X) \rightarrow DR(X \times S /S)
\rightarrow DR(X/S),$$ where the first map allows us to see forms on
$X$ as relative forms on the projection map $X \times S \rightarrow
S;$ and the second one is the pullback along $(id \times \pi): X
\rightarrow X \times S. $ We refer to \cite{Calaque2019} for details.

If, in addition, $\pi$ is locally of finite presentation, we can
define $\mathbb{T}_{X/S}= \mathbb{L}_{X/S}^{\vee}$ as the dual
(recall the fiber sequence $\mathbb{T}_{X/S} \rightarrow \mathbb{T}_X
\rightarrow \pi^* \mathbb{T}_S$). Then a relative 2-form $\omega$ of
degree $n$ induces a morphism
\begin{equation} \label{eqn:relative symplectic form}
  \omega^{\flat}: \mathbb{T}_{X/S} \rightarrow \mathbb{L}_{X/S} [n],
\end{equation}
yielding the following definition:

\begin{definition}\label{defn_relative symplectic strc}
  A \bfem{relative $n$-shifted symplectic structure $\omega$ on $\pi:
  X \rightarrow S$} is a closed relative $2$-form of degree $n$ on
  $\pi$ such that the induced map $\omega^{\flat}: \mathbb{T}_{X/S}
  \rightarrow \mathbb{L}_{X/S} [n]$ is an equivalence.
\end{definition}

\begin{remark}
  Assuming all stacks are $\K$-stacks, the relative objects on the
  morphism $X \rightarrow \mathrm{pt}$, with target as the point
  $\mathrm{pt}$, will recover the ordinary (absolute) shifted
  geometric structures defined previously.
\end{remark}

Next, we formulate the notion of an (exact) symplectic fibration in
the derived context. To this end, let $\pi: X \rightarrow S$ be a
morphism of derived Artin stacks. Define the \bfem{fiber $ X_s $ over
$s\in S$} by the derived fiber product diagram
\begin{equation}
  \begin{tikzcd}
    X_s:=X \times_{S} \{*\} \arrow[r] \arrow[d] & \{*\} \arrow[d, "s"] \\
    X \arrow[r,"\pi"]                                 & S,
  \end{tikzcd}
\end{equation}where we denote by $\iota_s$ the inclusion of the fiber
$X_s \hookrightarrow X$. Note that if $\omega$ is a relative
$n$-shifted symplectic structure on $\pi$, then, over $s\in S$, the
fiber $X_s$ has the induced $n$-shifted symplectic structure, written
$\omega_s$. Thus, we define:

\begin{definition}[Twisted exact symplectic fibrations]
  \label{defn_shifted symp fibration}
  Let $\mathcal{L}$ be a line bundle on $S$. An
  \bfem{$\mathcal{L}$-twisted $n$-shifted exact symplectic fibration
  structure} on a morphism $\pi: X \rightarrow S$ of derived Artin
  $\K$-stacks is the datum of
  \begin{itemize}
    \item a relative $\pi^*\mathcal{L}$-twisted
      $n$-shifted 1-form $\lambda \in \mathcal{A}^1(X/S,
      \pi^*\mathcal{L}, n)$ such that,
    \item the 2-form $\omega := d_{tot}\lambda =
      d_{dR}\lambda$ is a relative $n$-shifted symplectic structure on
      $\pi$ in the sense of Definition \ref{defn_relative symplectic
      strc}, with the evident twisted modification: the non-degeneracy
      now means that $\omega^\flat: \mathbb{T}_{X/S} \rightarrow
      \mathbb{L}_{X/S} \otimes \pi^*\mathcal{L}[n]$ is an
      equivalence. In particular, on each geometric fiber $X_s$ for
      $s\in S$, $\omega$ restricts to a non-degenerate exact
      $\iota_s^*\pi^*\mathcal{L}$-twisted closed 2-form $\omega_s :=
      \iota_s^*\omega$, which any trivialization of the line
      $\iota_s^*\pi^*\mathcal{L}$ identifies with an $n$-shifted
      \textit{exact} symplectic structure on $X_s$ (Lemma
      \ref{lem:fiberwise twist}).
  \end{itemize} We call $\lambda$ a \bfem{relative primitive}. For
  the globally trivial case,
  $\mathcal{L}\simeq\cO_S$, we drop $\mathcal{L}$ and speak simply of an
  \bfem{$n$-shifted exact symplectic fibration}.
\end{definition}

%As we will see in the proof of Theorem \ref{thm:2} below and by the
% following result from DSG literature, this notion of an $n$-shifted
% symplectic fibration is a convenient replacement for smooth ones.

\subsection{Compatibility and induced exact symplectic
fibrations}\label{sec:Ind_symp_fib}

We will now establish the first main result, Theorem \ref{thm:2}, of
this paper, stating that: \emph{Given a morphism $\pi: X \rightarrow
  S$ of derived Artin $\K$-stacks, if $X$ admits an $n$-shifted contact
  form $\alpha_X$ whose contact line bundle is pulled back from the
  base, $\cL_X \simeq \pi^*\cL$, and whose restriction
  $\iota_s^*d_{tot}\alpha_X$ to each geometric fiber $X_s$ is
  non-degenerate (equivalently, defines an $n$-shifted symplectic
  structure on $X_s$ after any trivialization of
  $\iota_s^*\pi^*\cL$), then $\pi$ admits a ``compatible'' $\cL$-twisted
$n$-shifted exact symplectic fibration structure.}

Before proving Theorem \ref{thm:2}, let us first introduce the
appropriate \bfem{compatibility} notion for shifted exact symplectic
fibrations in the DSG framework:% and then provide the proof of
% Theorem \ref{thm:2}:

\begin{definition}[Compatibility with a shifted exact symplectic
  fibration] \label{def:compatibility}
  Let $\pi: X \rightarrow S$ be a morphism of derived Artin
  $\K$-stacks such that $X$ admits an $n$-shifted contact form
  $\alpha_X \in \mathcal{A}^1(X, \cL_X, n)$ whose contact line bundle
  is pulled back from the base, $\cL_X \simeq \pi^*\cL$ for a line
  bundle $\cL$ on $S$. Denote by $\iota_s: X_s \hookrightarrow X$ the
  inclusion of the geometric fiber over $s\in S(\K)$. We say that
  $\alpha_X$ is \bfem{compatible with} $\pi$ if, for each $s\in
  S(\K)$, the twisted closed 2-form $\iota_s^*d_{tot}\alpha_X \in
  \mathcal{A}^{2,cl}(X_s, \iota_s^*\pi^*\cL, n)$ --- which is exact,
  with primitive $\iota_s^*\alpha_X$ --- is homotopic to a
  non-degenerate twisted 2-form; equivalently, by Lemma
  \ref{lem:fiberwise twist}, if any trivialization of the line
  $\iota_s^*\pi^*\cL$ identifies it, up to homotopy, with an
  $n$-shifted exact symplectic structure on $X_s$.

\end{definition}

\begin{proof}[\textbf{Proof of Theorem \ref{thm:2}}]
  We are given a morphism of derived Artin $\K$-stacks $\pi: X
  \rightarrow S$ and an absolute $n$-shifted contact structure
  $(X,\alpha_X, \cL_X[n]) \in \mathsf{Cont}(X, n)$ with $\cL_X \simeq
  \pi^*\cL$ for a line bundle $\cL$ on $S$, such that $\alpha_X$ is
  compatible with $\pi$.

  By the definition of relative forms \cite{PTVV}, the canonical
  projection $DR(X) \xrightarrow{\cdot/S} DR(X/S)$, tensored with
  $\pi^*\cL$, maps the twisted $n$-shifted 1-form $\alpha_X \in
  \mathcal{A}^{1}(X, \pi^*\cL, n)$ to a relative twisted $n$-shifted
  1-form $\alpha_{rel}:=\alpha_X/S \in
  \mathcal{A}^{1}(X/S, \pi^*\cL, n)$.
  Taking the total de Rham differential (Remark \ref{rmk:twisted
  d_dR}) yields an exact relative $\pi^*\cL$-twisted $n$-shifted
  closed 2-form
  \begin{equation}
    \omega_{rel}:= d_{tot} \alpha_{rel} \in \mathcal{A}^{2,cl}(X/S,
    \pi^*\cL, n),
  \end{equation}
  with relative primitive $\lambda := \alpha_{rel}$. On each
  geometric fiber, $\iota_s^*\omega_{rel} \simeq
  d_{tot}(\iota_s^*\alpha_{rel})$ is non-degenerate by compatibility,
  and is identified with an $n$-shifted exact symplectic structure by
  any trivialization of $\iota_s^*\pi^*\cL$ (Lemma
  \ref{lem:fiberwise twist}). 
  
  Since
  $\iota_s^*\mathbb{L}_{X/S} \simeq \mathbb{L}_{X_s}$ (Theorem
  \ref{thm:hag_cotangent_properties}-(\ref{item:cotangent_basechange})),
  the morphism $\omega_{rel}^\flat: \mathbb{T}_{X/S} \rightarrow
  \mathbb{L}_{X/S} \otimes \pi^*\cL[n]$ is an equivalence at every
  geometric point of $X$; by standard properties of perfect complexes
  (Theorem
  \ref{thm:HAG_perfect_properties}-(\ref{item:perfect_pointwise})),
  it is then an equivalence in $QCoh(X)$, exactly as in the proof of
  \cite[Theorem 1.1]{abi}. Hence $(\lambda, \omega_{rel})$ is an
  $\cL$-twisted $n$-shifted exact symplectic fibration structure on
  $\pi$ in the sense of Definition \ref{defn_shifted symp fibration},
  compatible with $\alpha_X$ by construction; the required variation
  of the proof of \cite[Theorem 1.1]{abi} consists precisely of
  carrying the twist by $\pi^*\cL$ through every step and identifying
  the fibers via Lemma \ref{lem:fiberwise twist}.
\end{proof}

%===================================
%===================================
%===================================

\section{Derived Contact  Thurston Theorem} \label{sec:results}

Our previous work \cite{abi} extends the concept of symplectic
fibration to the setting of derived symplectic geometry with the help
of relative derived structures, where the $n$-shifted relative
symplectic structures serve as an appropriate substitute for
classical symplectic fibrations.  We established two key theorems in
that paper:  The first discusses \textit{induced compatible shifted
symplectic fibrations}, and the second, called the
\textit{derived--symplectic--Thurston Theorem}, provides a method for
constructing a compatible (absolute) shifted symplectic structure on
the source $X$ of a shifted symplectic fibration \( \pi: X
\rightarrow (S,\omega_S) \),  with a shifted symplectic target. %This
% structure is in fact induced by the given relative shifted
% symplectic structure on the morphism \( \pi: X \rightarrow S \),
% where \( S \) is a derived stack with a shifted symplectic target.
% % leading to the \textit{derived version of Thurston's theorem}.

A contact analog of the (classical) symplectic Thurston Theorem
(which can be named as the \textit{contact Thurston Theorem}) has
been provided by the first author in \cite{mfa}, where the techniques
in proving symplectic Thurston Theorem are modified for the smooth
contact setting. More precisely, for a given exact symplectic
fibration $\pi:E\to B$ with compact fibers and a compact contact base
$B$, by summing a primitive of the exact symplectic structure on the
fibration $\pi$ with a suitable constant multiple of the pull-back of
the contact form on the base $B$, one can construct a contact form
(and so structure) on the total space $E$.

This section is devoted to establishing a \textit{contact version of
the derived symplectic Thurston Theorem} mentioned in the first
paragraph above. One should also note that the same result (see
below) can also be considered as a \textit{derived version of the
contact Thurston Theorem} discussed in the second paragraph above. In
what follows, we state and prove the precise version of Theorem
\ref{thm:1}, yielding a construction of absolute contact structures
over exact symplectic fibrations. Let us start with the main statement:

\begin{theorem}[Derived Contact Thurston Theorem for Artin Stacks]
  \label{thm: proof_thurston in dcg}
  Let $X$ and $S$ be derived Artin $\K$-stacks (both locally of
  finite presentation and quasi-compact) over a field $\K$ of
  characteristic zero. Suppose also that the base $S$ admits an
  $n$-shifted contact structure, with the twisting line bundle
  $\mathcal{L}_S$, and an $n$-contact form $\sigma \in
  \mathcal{A}^1(S, \cL_S, n)$. Let
  \[ \omega = d_{tot}\lambda \in \mathcal{A}^{2,cl}(X/S, \pi^*\cL_S, n) \]
  be a $\cL_S$-twisted $n$-shifted exact symplectic fibration
  structure on $\pi: X \rightarrow (S, \cL_S[n], \sigma)$, with
  relative primitive $\lambda \in \mathcal{A}^1(X/S, \pi^*\cL_S, n)$.

  Assume there is a global $\pi^*\cL_S$-twisted $n$-shifted 1-form
  $\Omega \in \mathcal{A}^{1}(X, \pi^*\cL_S, n)$ on $X$ with
  $\iota_s^*d_{tot}\Omega \sim \omega_s$ in
  $\mathcal{A}^{2,cl}(X_s,\iota_s^*\pi^*\cL_S, n)$, where $\iota_s:
  X_s \hookrightarrow X$ is the inclusion of the geometric fiber over
  $s\in S(\K)$ and $\omega_s\sim\iota_s^*\omega$.
  Then we have:
  \begin{enumerate}[\upshape(a)]
    \item (Lifting formalism) There exists an absolute $n$-shifted
      contact structure defined on a dense Zariski-open derived
      substack $U \subseteq X$, with contact line bundle $\pi^*\cL_S|_U$.

    \item (Homological construction) There exists an absolute
      $n$-shifted contact structure with a primitive form $\alpha_X$ such that
      \begin{enumerate}[\upshape(i)]
        \item $\alpha_X \sim \Omega + c \cdot \pi^* \sigma$ \, in
          $\mathcal{A}^{1}(U,\pi^*\cL_S,n)$ for a generic constant $c
          \in \K$, and
        \item $\iota_s^* d_{tot}\alpha_X \sim \omega_s$ \, in
          $\mathcal{A}^{2,cl}(U_s,\iota_s^*\pi^*\cL_S,n)$ for each
          $s\in S(\K)$, making $\alpha_X$ compatible with $\pi$.
      \end{enumerate}
      Furthermore, if $X$ is proper over $\K$, then $U = X$, and the
      structure extends globally.
    \item (Equivalence) The contact forms constructed in parts
      \textup{(a)} and \textup{(b)} agree.
  \end{enumerate}

\end{theorem}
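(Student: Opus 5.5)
The plan is to imitate the smooth argument of \cite{mfa}: set $\alpha_c := \Omega + c\cdot\pi^*\sigma \in \mathcal{A}^1(X,\pi^*\cL_S,n)$ for $c\in\K$, and show that it is an $n$-contact form on a dense Zariski-open $U\subseteq X$ for generic $c$. All computations with $d_{tot}$ are made on the symplectification $\widetilde{X}=\pi^*\widetilde{S}$ via the weight dictionary (Notation \ref{notn:twisted}, Remark \ref{rmk:twisted d_dR}). This reduces the problem to the untwisted local statement wherever $\cL_S$ is trivialized.

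\textbf{Step 1: splitting the contact condition.}
The fiber sequences $\TT_{X/S}\to\TT_X\to\pi^*\TT_S$ and $\mathcal{K}_S\to\TT_S\to\cL_S[n]$ give a filtration of $\TT_X$ with graded pieces $\TT_{X/S}$, $\pi^*\mathcal{K}_S$ and $\pi^*\cL_S[n]$. We have $\alpha_c^\vee = \Omega^\vee + c\,\pi^*\sigma^\vee$. On the $\pi^*\cL_S[n]$ piece, the term $c\,\pi^*\sigma^\vee$ is an equivalence for $c\neq 0$, so $\alpha_c^\vee$ is surjective and $\mathcal{K}_{\alpha_c}:=\mathrm{fib}(\alpha_c^\vee)$ is an extension of $\pi^*\mathcal{K}_S$ by (a graph-perturbation of) $\TT_{X/S}$.

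Next, write $d_{tot}\alpha_c = d_{tot}\Omega + c\,\pi^*d_{tot}\sigma$ and restrict it to $\mathcal{K}_{\alpha_c}$. Its block matrix has three pieces:
\begin{itemize}
\item the $\TT_{X/S}$-block is $\omega$ up to homotopy, by the hypothesis $\iota_s^*d_{tot}\Omega\sim\omega_s$ together with base change $\iota_s^*\LL_{X/S}\simeq\LL_{X_s}$;
\item the $\pi^*\mathcal{K}_S$-block is $c\,\pi^*(d\sigma|_{\mathcal{K}_S})$ plus a $c$-independent correction coming from $d_{tot}\Omega$;
\item the off-diagonal blocks are independent of $c$.
\end{itemize}
Since $\omega^\flat$ is an equivalence, a Schur-complement argument shows that non-degeneracy on $\mathcal{K}_{\alpha_c}$ is equivalent to invertibility of $c\,(d\sigma|_{\mathcal{K}_S})^\flat + E$, where $E$ is a fixed, $c$-independent endomorphism of the perfect complex $\pi^*\mathcal{K}_S$.

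\textbf{Step 2: the open locus (part (a)).}
By Theorem \ref{thm:HAG_perfect_properties}, invertibility of a map of perfect complexes can be checked at geometric points. Locally, after fixing a minimal model, the locus where it fails is cut out by the vanishing of a determinant-type function on $\pi_0$. In the variable $c$ this function is a polynomial whose leading coefficient is $\det(d\sigma|_{\mathcal{K}_S})$, which is a unit because $\sigma$ is contact. Hence, for all but finitely many $c$ on each quasi-compact chart, the failure locus is a proper Zariski-closed subset. Quasi-compactness of $X$ lets us choose one generic $c$ that works on every chart of a finite atlas, and the complement of the failure locus is a dense Zariski-open $U$. On $U$ the structure is $(\alpha_c|_U,\pi^*\cL_S|_U)$. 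In the lifting-formalism phrasing of (a), one can equivalently observe that the weight-$1$ form $d_{tot}\alpha_c$ on $\widetilde{U}$ is symplectic (non-degeneracy on the missing direction comes from the weight-$1$ Euler vector field) and then descend using Theorem \ref{thm:derived transversality}.

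\textbf{Step 3: compatibility and the remaining claims (parts (b) and (c)).}
Property (i) holds by construction. For (ii), note that $\iota_s^*\pi^*\sigma$ factors through the point $s$, so $\iota_s^*d_{tot}\pi^*\sigma\sim 0$; therefore $\iota_s^*d_{tot}\alpha_c\sim\iota_s^*d_{tot}\Omega\sim\omega_s$, which is compatibility in the sense of Definition \ref{def:compatibility}. Part (c) holds because the quotient in Theorem \ref{thm:derived transversality} recovers the weight-$1$ primitive, i.e.\ $\alpha_c$ itself.

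\textbf{Main obstacle.}
The step I expect to be hardest is the claim that $U=X$ when $X$ is proper. The plan there is to mimic the compactness argument of \cite{mfa}. By properness, the coefficients of $E$ relative to $d\sigma|_{\mathcal{K}_S}$ should be bounded, in the sense that they are constant along the proper directions. The polynomial in $c$ from Step 2 then has only finitely many global roots, and any $c$ outside this finite set gives an empty degeneracy locus. Making this rigorous in the derived setting, where the shift may be odd, needs care. I would attempt it via properness of the support of the cone of the map together with the fact that the leading term is a unit, and I may need to weaken the claim to $U$ having complement of codimension at least one.
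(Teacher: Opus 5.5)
Your route is the paper's homological construction for (b): $\alpha_c=\Omega+c\,\pi^*\sigma$, the three pieces $\TT_{X/S}$, $\pi^*\mathcal{K}_S$, $\pi^*\cL_S[n]$, a graph description of $\mathcal{K}_{\alpha_c}$, and a generic $c$. Your arguments for (b)(ii) and (c) coincide with the paper's. Deducing (a) from (b) through the symplectification is legitimate. The paper instead applies \cite[Theorem 4.1]{abi} to $\tilde\pi\colon\widetilde{X}\to\widetilde{S}$ and descends with Theorem \ref{thm:derived transversality}; your ordering has the merit that (a) and (b) use the same $c$ and $U$ by construction.

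However, Step 1 contains an error on which Step 2 rests. The identification $\mathcal{K}_{\alpha_c}\simeq\TT_{X/S}\oplus\pi^*\mathcal{K}_S$ is the graph $\kappa=(\mathrm{id},-b^{-1}\circ a)$, where $b=c\cdot\mathrm{id}+K$ and $K=(\pi^*\sigma^\vee)^{-1}\circ\Omega^\vee|_{\pi^*\cL_S[n]}$. This graph exists only where $b$ is invertible. Since $\Omega^\vee$ also acts on the $\pi^*\cL_S[n]$ summand, the claim ``$c\,\pi^*\sigma^\vee$ is an equivalence, hence $\alpha_c^\vee$ is surjective for $c\neq 0$'' is false. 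You must first pass to the open $U_1$ where $c\cdot\mathrm{id}+K$ is invertible, which is dense for generic $c$. Moreover, pulling $d_{tot}\alpha_c$ back along $\kappa$ creates correction terms, each carrying a factor $b^{-1}$. These appear in the off-diagonal and horizontal blocks, and in the vertical block too unless the horizontal lift is the $\omega_\Omega$-orthogonal section $\eta$ coming from the retraction $\rho$. So your $E$ is not $c$-independent but rational in $c$, and ``a polynomial in $c$ with unit leading coefficient'' does not follow as written.

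The repair is the paper's Step 4:
\begin{itemize}
\item Set $u=c^{-1}$, multiply the $\pi^*\mathcal{K}_S$-row by $u$, and check that every entry is regular at $u=0$. Then $N(0)$ is block-triangular (block-diagonal with the paper's orthogonal splittings) with diagonal blocks $(\omega_\Omega)^\flat_{rel}$ and $\pi^*(\omega_{\mathcal{K}_S})^\flat$, hence an equivalence.
\item For the first block you only need $(\omega_\Omega)^\flat_{rel}$ to be an equivalence. This follows from the fiberwise hypothesis by conservativity on points; a global homotopy with $\omega$ is neither available nor needed.
\item A map of perfect complexes has no single determinant cutting out its non-equivalence locus. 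Use instead that the support of its cone is closed (Theorem \ref{thm:HAG_perfect_properties}). Then at each generic point of $X$ the bad values of $u$ form a finite set not containing $0$.
\end{itemize}

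For properness you need not weaken the claim: your last idea works once it is applied to the family $N(u)$. Let $Z\subseteq\mathbb{A}^1_u\times X$ be the union of the locus where $1+uK$ is not invertible and the closure of the support of the cone of $N(u)$. Then $Z$ is closed and disjoint from $\{0\}\times X$. Since $X$ is proper, the projection of $Z$ to $\mathbb{A}^1_u$ is closed and misses $0$, hence finite. Any $c$ with $c^{-1}$ outside it gives non-degeneracy on all of $X$. The paper argues instead that $K$ and $M$ lie in finite-dimensional global endomorphism algebras and so have finitely many eigenvalues. The projection argument has the advantage of also controlling the $b^{-1}$-corrections $R(c)$ at the same time.
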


\subsection{Proof of Theorem \ref{thm: proof_thurston in dcg}(a):
Existence via equivariant descent }\label{sec:contact_symp_fib}

%\subsection{Exact derived contact symplectic fibrations}

The classical Thurston construction was recently adapted to contact
geometry by the first author \cite{mfa}, demonstrating that the total
space of a symplectic fibration with exact symplectic fibers over a
contact base admits a compatible contact structure. We now aim to
establish the derived analogue of this result, providing a
construction for shifted contact structures on the total spaces of
exact derived symplectic fibrations over shifted contact bases. This
will be accomplished by synthesizing the (symplectic) derived
Thurston construction \cite[Theorem 4.1]{abi} with the derived
symplectification and transversality theorems given in
\cite{IzbudakBerktav2026}.

Let $\pi: X \to S$ be a derived fibration where the base $S$ is an
$n$-shifted contact derived Artin stack and the geometric fibers
$X_s$ are exact $n$-shifted symplectic after any trivialization of
the restricted line $\iota_s^*\pi^*\cL_S$ (Lemma
\ref{lem:fiberwise twist}). We construct an absolute
$n$-shifted contact structure on the total space $X$ through the
following sequence of steps.
\vspace{-0.05in}

\paragraph{Step 1: Derived symplectification of the base.}
Following \cite{IzbudakBerktav2026}, the $n$-shifted contact
structure on $S$ is governed by a contact line bundle
$\mathcal{L}_S$. The derived symplectification of $S$ is defined as
the total space of the associated principal $\mathbb{G}_m$-bundle
\begin{equation}
  p_S: \widetilde{S} \longrightarrow S.
\end{equation}
By the properties of derived symplectification, the derived stack
$\widetilde{S}$ carries an $n$-shifted symplectic structure
$\omega_{\widetilde{S}}$. This symplectic form operates as an
eigenform of weight 1 with respect to the $\mathbb{G}_m$-action on
the fibers of $p_S$. Moreover $\omega_{\widetilde{S}} =
d_{dR}\widetilde{\sigma}$, where $\widetilde{\sigma} \in
\mathcal{A}^1(\widetilde{S},n)^{\{1\}}$ is the weight $1$ form
corresponding to $\sigma$ under the weight dictionary of Notation
\ref{notn:twisted}.
\vspace{-0.05in}

\paragraph{Step 2: The derived (symplectic) Thurston construction on the lift.}
We pull back the fibration $\pi: X \to S$ along the torsor projection
$p_S$ to define a new fibration over the symplectified base. Let
$\widetilde{X}$ be the derived fiber product \(\widetilde{X} := X
\times_S \widetilde{S}\), with the homotopy commutative diagram
\[
  \begin{tikzcd}
    \tilde{X}:= X \times_S \tilde{S} \arrow[d] \arrow[r,
    "\tilde{\pi}"] & \tilde{S} \arrow[d, "p_S"] \\
    X \arrow[r, "\pi"]                                                  & S
\end{tikzcd}\]
which yields a pullback fibration $\tilde{\pi}: \widetilde{X} \to
\widetilde{S}$. We equip $\tilde{\pi}$ with the pulled-back fibration
structure $(\widetilde{\lambda}, \widetilde{\omega}) :=
(p_X^*\lambda, p_X^*\omega)$: the twist $p_X^*\pi^*\cL_S \simeq
\tilde{\pi}^*p_S^*\cL_S$ is canonically trivialized on
$\widetilde{X}$, so the weight dictionary (Notation
\ref{notn:twisted}) identifies $\widetilde{\lambda}$ and
$\widetilde{\omega}$ with untwisted relative forms of weight $1$;
moreover $\mathbb{L}_{\widetilde{X}/\widetilde{S}} \simeq
p_X^*\mathbb{L}_{X/S}$, so the relative non-degeneracy is preserved under
this base change. Hence $(\widetilde{\lambda}, \widetilde{\omega})$
is an $n$-shifted exact symplectic fibration structure on
$\tilde{\pi}$, whose fiber over a geometric point $\tilde{s} \in
\widetilde{S}$ above $s$ is $X_s$ equipped with the trivialization of
$\omega_s$ determined by the basis $\tilde{s}$ of $L_s$ (Lemma
\ref{lem:fiberwise twist}).

Under the same dictionary, $\Omega$ corresponds to a basic weight $1$
absolute $n$-shifted 1-form $\widetilde{\Omega}$ on $\widetilde{X}$,
and the hypothesis $\iota_s^*d_{tot}\Omega \sim \omega_s$ --- an
identity of $\iota_s^*\pi^*\cL_S$-twisted forms --- transfers to the
identity $\iota_{\tilde{s}}^*d_{tot}\widetilde{\Omega} \sim
\iota_{\tilde{s}}^*\widetilde{\omega}$ of ordinary forms at every
geometric point $\tilde{s}$ of $\widetilde{S}$, both sides being
trivialized by the same basis $\tilde{s}$. Thus the closed $2$-form
$d_{tot}\widetilde{\Omega}$ restricts on each geometric fiber of
$\tilde{\pi}$ to the exact symplectic structure of that fiber; this
is the hypothesis of \cite[Theorem 4.1]{abi} for $\tilde{\pi}$, which
therefore produces an
absolute $n$-shifted symplectic form
\begin{equation}
  \omega_{\widetilde{X}} := d_{dR}\widetilde{\Omega} + c \cdot
  \tilde{\pi}^*(\omega_{\widetilde{S}})
\end{equation}
on a dense Zariski-open derived substack $\widetilde{U} \subseteq
\widetilde{X}$, where $c \in \K$ is a generic scalar constant.
\vspace{-0.1in}

\paragraph{Step 3: Equivariant descent and contact reduction.}
The symplectic structure $\omega_{\widetilde{X}}$ on the lift must
descend to a contact structure on the total space $X$. This is
achieved via the derived transversality theorem of \cite{IzbudakBerktav2026}.

The space $\widetilde{X}$ inherits a $\mathbb{G}_m$-action from
$\widetilde{S}$, and $p_X: \widetilde{X} \rightarrow X$ is a principal
$\mathbb{G}_m$-bundle, being the pullback of $p_S$. By Theorem
\ref{thm_Symplectization}, $\omega_{\widetilde{S}}$ is an eigenform of
weight $1$, hence so is $\tilde{\pi}^*\omega_{\widetilde{S}}$;
moreover $d_{dR}\widetilde{\Omega}$ is of weight $1$ because
$\widetilde{\Omega}$ is and $d_{dR}$ is $\mathbb{G}_m$-equivariant.
Therefore, $\omega_{\widetilde{X}}$ is an eigenform of weight $1$.

The locus $\widetilde{U}$ may be taken to be maximal, namely the
vanishing locus of the cone of $\omega_{\widetilde{X}}^\flat:
\mathbb{T}_{\widetilde{X}} \to \mathbb{L}_{\widetilde{X}}[n]$: since
$\omega_{\widetilde{X}}$ has weight $1$, this morphism is
$\mathbb{G}_m$-equivariant up to the weight $1$ twist of its target,
so its cone is an equivariant perfect complex, its vanishing locus
$\widetilde{U}$ is $\mathbb{G}_m$-invariant, and
$\widetilde{U}=p_X^{-1}(U)$ for a dense Zariski-open derived substack
$U \subseteq X$. Consequently
$(\widetilde{U}, \omega_{\widetilde{X}}|_{\widetilde{U}})$ is an
$n$-shifted symplectic derived Artin stack equipped with a weight $1$
$\mathbb{G}_m$-action, and \cite[Theorem 3.7]{IzbudakBerktav2026}
endows the stack quotient
\begin{equation}
  [\widetilde{U}/\mathbb{G}_m] \simeq U
\end{equation}
with an $n$-shifted contact structure, whose contact line bundle is
$\pi^*\cL_S|_U$ and whose contact form is the descent of the weight
$1$ primitive $\widetilde{\Omega} + c \cdot
\tilde{\pi}^*\widetilde{\sigma}$. Indeed, the contact form of
\cite[Theorem 3.7]{IzbudakBerktav2026} (restated as Theorem
\ref{thm:derived transversality}) is the descent of the contraction
$\iota_Y\omega_{\widetilde{X}}$ of $\omega_{\widetilde{X}}$ with the
fundamental vector field $Y$ of the $\mathbb{G}_m$-action; since
$\widetilde{\Omega} + c \cdot \tilde{\pi}^*\widetilde{\sigma}$ is
horizontal of weight $1$, we have $\iota_Y(\widetilde{\Omega} + c
\cdot \tilde{\pi}^*\widetilde{\sigma}) \simeq 0$ while the Lie
derivative along $Y$ acts on it as the identity, so the Cartan
homotopy formula gives $\iota_Y\omega_{\widetilde{X}} \sim
\widetilde{\Omega} + c \cdot \tilde{\pi}^*\widetilde{\sigma}$.
\qed

\subsection{Proof of Theorem \ref{thm: proof_thurston in dcg}(b):
Primitives via homological construction}
The proof proceeds within the stable $\infty$-category of
quasi-coherent sheaves, ${QCoh}(X)$, and is based on the proof scheme
of \cite[Theorem 4.1]{abi}, with obvious modifications. We leave some
details to the reader, but provide an outline highlighting the
necessary modifications.   In line with the approach in \cite[Theorem
4.1]{abi}, the proof of Theorem \ref{thm: proof_thurston in dcg}(b)
is also divided into four steps.

\paragraph{Step 1: Definition of $\alpha_X$ and fiber compatibility.}
By assumption, we are given a global absolute $n$-shifted
$\pi^*\cL_S$-twisted 1-form $\Omega \in \mathcal{A}^{1}(X,
\pi^*\cL_S, n)$ whose de Rham differential restricts on any geometric
fiber $X_s$ to $\iota_s^*d_{dR}\Omega \sim \omega_s$ in
$\mathcal{A}^{2,cl}(X_s,\iota_s^*\pi^*\cL_S,n)$. We define the
candidate absolute $n$-shifted contact form on $X$ directly as:
\begin{equation} \label{eq:alpha_candidate}
  \alpha_X := \Omega + c \cdot \pi^*\sigma \quad \in
  \mathcal{A}^{1}(X, \pi^*\cL_S, n),
\end{equation}
where $c \in \K$ is a scalar constant (to be determined later) and
$\sigma$ is the $n$-contact form on $S$. Since $\mathcal{A}^1(X,
\pi^*\cL_S, n)$ is a
$\K$-module, the combination $\alpha_X$ is again a $\pi^*\cL_S$-twisted
$n$-shifted 1-form, and $d_{tot}\alpha_X = d_{dR}\Omega + c \cdot
\pi^*d_{dR}\sigma$.

Recall that for any $\K$-point $s \in S(\K)$, the geometric fiber
$X_s$ is defined by the following Cartesian diagram in the
$\infty$-category of derived stacks $dSt_{\K}$:
\begin{equation} \label{cd:fiber_pullback}
  \begin{tikzcd}[column sep=large, row sep=large]
    X_s \arrow[r, "\iota_s", hook] \arrow[d, "p_s"'] & X \arrow[d, "\pi"] \\
    \spec\, \K \arrow[r, "s"'] & S.
  \end{tikzcd}
\end{equation}
Since $\LL_{\spec \K} \simeq 0$ and $\pi \circ \iota_s \simeq s \circ
p_s$ factors through $\spec \K$, we have a canonical null-homotopy
$\iota_s^*\pi^*\sigma \simeq 0$, and evaluating the closed $2$-form
$d_{dR}\alpha_X$ on the fiber yields
\begin{equation} \label{eq:fiber_eval}
  \iota_s^*d_{dR}\alpha_X \simeq \iota_s^*d_{dR}\Omega + c \cdot
  \iota_s^*\pi^*d_{dR}\sigma \sim \omega_s + 0 \simeq \omega_s,
\end{equation}
which gives the desired compatibility condition for any choice of the
constant $c$.

\paragraph{Step 2: Homological splitting of the tangent complex.}
To prove that $\alpha_X$ defines a valid contact structure, we must
verify its non-degeneracy. Let $\omega_X := d_{tot}\alpha_X$ and
$\omega_\Omega := d_{tot}\Omega$. The 1-form $\alpha_X$ induces a
contraction morphism $\alpha_X^\vee: \mathbb{T}_X \to \pi^*\cL_S[n]$.
Then the derived contact distribution $\mathcal{K}_X \in QCoh(X)$ is
defined as the homotopy fiber of $\alpha_X^\vee$:
\begin{equation}
  \mathcal{K}_X \longrightarrow \mathbb{T}_X
  \xrightarrow{\alpha_X^\vee} \pi^*\cL_S[n].
\end{equation}
We must now prove the following claim.
\begin{claim}\label{claim_non-degenerate contact dist}
  The derived contact distribution is in fact non-degenerate, meaning
  that the induced morphism $\omega_X^\flat: \mathcal{K}_X \to
  \mathcal{K}_X^\vee \otimes \pi^*\cL_S[n]$ is an equivalence in
  $\mathrm{Ho}({QCoh}(X))$.
\end{claim}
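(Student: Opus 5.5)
The plan is to reduce the non-degeneracy of $\omega_X^\flat|_{\mathcal{K}_X}$ to a pointwise statement and then use a splitting of $\mathcal{K}_X$ into a vertical and a horizontal piece, following Step 2 of the proof of \cite[Theorem 4.1]{abi}.

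\textbf{Reduction to geometric points.} Both $\mathcal{K}_X$ and $\mathcal{K}_X^\vee\otimes\pi^*\cL_S[n]$ are perfect, since $X,S$ are locally finitely presented and $\mathcal{K}_X$ is the fiber of a map between perfect complexes. By Theorem \ref{thm:HAG_perfect_properties}-(\ref{item:perfect_pointwise}), it therefore suffices to show that $\omega_X^\flat$ is an equivalence after restriction to every geometric point $x\in X(\K)$ with $s=\pi(x)$. Working locally, we trivialize $\cL_S$ near $s$, as in Remark \ref{rmk:twisted d_dR}, so that all forms are untwisted.

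\textbf{The two fiber sequences.} From $\mathbb{T}_{X/S}\to\mathbb{T}_X\to\pi^*\mathbb{T}_S$ and $\mathcal{K}_S=\mathrm{fib}(\sigma^\vee)$, I will build the fiber sequence
\[
\mathbb{T}_{X/S}\longrightarrow \mathcal{K}_X\longrightarrow \pi^*\mathcal{K}_S' .
\]
Here $\pi^*\mathcal{K}_S'$ is the fiber of $\pi^*\mathbb{T}_S\to\pi^*\cL_S[n]$ induced by $\alpha_X^\vee$ modulo the vertical part. Because $\alpha_X=\Omega+c\,\pi^*\sigma$, its horizontal component is $\Omega_{\mathrm{hor}}+c\,\sigma$, and for generic $c$ the map $\alpha_X^\vee|_{\mathrm{hor}}$ at $x$ is a nonzero multiple of $\sigma^\vee$ perturbed by $\Omega$. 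The reason for choosing $c$ generic, and not merely nonzero, is to make this horizontal contraction surjective onto $\cL_S[n]$ at $x$; on the vertical part, $\alpha_X^\vee$ restricts to $\iota_s^*\Omega^\vee$, which may be nonzero. To cope with this, I will instead use the filtration of $\mathbb{T}_X|_x$ by $\mathbb{T}_{X_s,x}\simeq\mathbb{T}_{X/S}|_x$ (Theorem \ref{thm:hag_cotangent_properties}-(\ref{item:cotangent_basechange})).

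\textbf{Block-triangular comparison.} Write $\omega_X^\flat$ in the induced $2\times2$ block form with respect to the vertical/horizontal decomposition. The vertical-vertical block is $\iota_s^*\omega_X^\flat\simeq\omega_s^\flat$ by \eqref{eq:fiber_eval}, which is an equivalence because the fibration is exact symplectic. The horizontal-horizontal block is $c\,d_{dR}\sigma|_{\mathcal{K}_S}+(\text{terms from }d\Omega)$. After rescaling by $c^{-1}$, this block tends to $d_{dR}\sigma|_{\mathcal{K}_S}$, which is an equivalence by the contact condition on $S$. Determinant-type invertibility, namely the non-vanishing of the Euler characteristic or of the Pfaffian on cohomology of the fiber of a perfect complex at a point, is a Zariski-open condition in $c$. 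It is nonempty because it holds as $c\to\infty$, or equivalently it holds for the $c^{-1}$-rescaled family at $c^{-1}=0$. Hence a generic $c$ works at $x$. The locus of points where the cone of $\omega_X^\flat$ vanishes is Zariski open and dense, and this gives $U$; if $X$ is proper, the finitely many conditions from a quasi-compact cover allow a single $c$ that works everywhere. The five lemma applied to the block-triangular map then shows that $\omega_X^\flat$ is an equivalence on $U$.

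The main obstacle is this last step. In the derived setting the off-diagonal blocks are only defined up to homotopy, and the contact distribution $\mathcal{K}_X$ does not split strictly as $\mathbb{T}_{X/S}\oplus\pi^*\mathcal{K}_S$. I would first try to produce a cofiber sequence of the two fiber sequences compatible with $\omega_X^\flat$, so that the two-out-of-three property applies without needing a splitting. Uniformity of the generic $c$ across points is reduced to a quasi-compactness argument.
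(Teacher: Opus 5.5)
There is a genuine gap, and it sits at the step you flag as the main obstacle. Your limiting strategy is the paper's Step 4: rescale by $u=c^{-1}$, observe that at $u=0$ only $(\omega_\Omega)^\flat_{rel}$ and $\pi^*(\omega_{\mathcal{K}_S})^\flat$ survive, and use openness in $u$. But without a concrete model of $\mathcal{K}_X$ in vertical and horizontal pieces, there is no block matrix to rescale.

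\begin{itemize}
\item Your fiber sequence $\mathbb{T}_{X/S}\to\mathcal{K}_X\to\pi^*\mathcal{K}_S'$ does not exist. The composite $\alpha_X^\vee\circ i\simeq\Omega^\vee\circ i$ is in general nonzero, so $i$ does not factor through $\mathcal{K}_X$, and $\alpha_X^\vee$ does not descend to $\pi^*\mathbb{T}_S$.
\item Filtering instead by $\mathcal{K}_X\cap\mathbb{T}_{X/S}=\mathrm{fib}(\Omega^\vee\circ i)$ also fails. Already classically, a symplectic form restricted to a hyperplane is degenerate.
\item Hence your claim that the vertical--vertical block of $\omega_X^\flat|_{\mathcal{K}_X}$ is $\omega_s^\flat$ is false at every finite $c$, and the five-lemma step has nothing to apply to.
\item Even at a geometric point, the triangle $\mathbb{T}_{X/S}\to\mathbb{T}_X\to\pi^*\mathbb{T}_S$ need not split in the derived setting, since its connecting map can be nonzero. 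A block decomposition of $\mathbb{T}_X$ has to be produced, not assumed.
\end{itemize}

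The paper supplies three missing ingredients.
\begin{enumerate}
\item The retraction $\rho=((\omega_\Omega)^\flat_{rel})^{-1}\circ i^\vee[n]\circ\omega_\Omega^\flat$. It exists because the vertical block is an equivalence, and it splits $\mathbb{T}_X\simeq\mathbb{T}_{X/S}\oplus\pi^*\mathbb{T}_S$ $\omega_\Omega$-orthogonally.
\item The splitting $\mathbb{T}_S\simeq\mathcal{K}_S\oplus\cL_S[n]$ via $r_S$, so that the last summand is the base Reeb direction.
\item On the open locus where $b=\alpha_X^\vee|_{\pi^*\cL_S[n]}\simeq c\cdot\mathrm{id}+K$ is invertible, the graph $\kappa=(\mathrm{id}_E,-b^{-1}\circ a)$. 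This is an honest equivalence $\mathbb{T}_{X/S}\oplus\pi^*\mathcal{K}_S\simeq\mathcal{K}_{U_1}$, obtained by tilting every vector along the Reeb summand.
\end{enumerate}
Pulled back along $\kappa$, $\omega_X^\flat$ is the block-diagonal matrix plus a correction $R(c)$. Every entry of $R(c)$ carries a factor $b^{-1}$. Because the base splitting is $\omega_S$-orthogonal, $c\cdot\pi^*\omega_S$ contributes no $c$-linear part to these cross terms. So after multiplying the horizontal row by $u$, the correction vanishes at $u=0$, and only then is your limiting argument legitimate.

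Finally, nonvanishing of an Euler characteristic does not detect equivalences. The correct tool is this: over each generic point, the support of the cone of the rescaled family is closed in $\mathbb{A}^1_u$ and misses $0$, hence is finite. A single generic $c$ therefore works at all the finitely many generic points, and no five lemma at finite $c$ is needed.
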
To establish Claim \ref{claim_non-degenerate contact
dist}, we first need some preliminary computations provided in the
rest of Step-2. The necessary follow-up constructions continue in
Step-3 and are finalized in Step-4. The proof of Claim
\ref{claim_non-degenerate contact dist} will then be completed at the
end of Step-4, which will also conclude the proof of Theorem
\ref{thm: proof_thurston in dcg}(b).

Let us begin with some preliminary observations. Recall that by
dualizing the canonical relative cotangent exact triangle (Theorem
\ref{thm:hag_cotangent_properties}-(\ref{item:cotangent_transitivity})),
the morphism $\pi: X \to S$ induces an exact triangle of perfect
tangent complexes in $QCoh(X)$:
\begin{equation} \label{eq:tangent_triangle}
  \begin{tikzcd}
    \mathbb{T}_{X/S} \ar[r, "i"] & \mathbb{T}_X \ar[r, "p"] &
    \pi^*\mathbb{T}_S \ar[r, "+1"] & \mathbb{T}_{X/S}[1].
  \end{tikzcd}
\end{equation}
We then define the relative projection of $\omega_\Omega^\flat$ onto
the vertical tangent complex to be the composition
$(\omega_\Omega)_{rel}^\flat := i^\vee{[n]} \circ \omega_\Omega^\flat
\circ i : \mathbb{T}_{X/S} \to \mathbb{L}_{X/S} \otimes
\pi^*\cL_S[n]$, factoring as follows:
\begin{equation} \label{cd:theta_projection}
  \begin{tikzcd}
    \mathbb{T}_{X/S} \arrow[r, "i"] \arrow[d, "\Theta_{rel}^\flat"']
    & \mathbb{T}_X \arrow[d, "\Theta^\flat"] \\
    \mathbb{L}_{X/S} \otimes \pi^*\cL_S[n] & \mathbb{L}_X \otimes
    \pi^*\cL_S[n] \arrow[l, "i^\vee{[n]}"'],
  \end{tikzcd}
\end{equation}
where $\Theta = \omega_\Omega$, and where, here and for the rest of
the proof, we abusively write $f^\vee[n]$ for $(f^\vee \otimes
\mathrm{id})[n]$ whenever the twist by $\pi^*\cL_S$ (or by $\cL_S$ on
$S$) is present in the target.
By hypothesis, $\iota_s^*\omega_\Omega \sim \omega_s$, a
non-degenerate exact $\iota_s^*\pi^*\cL_S$-twisted closed 2-form on
the fiber $X_s$: non-degeneracy holds because $\omega$ is a relative
symplectic structure, and is unaffected by the twist (Lemma
\ref{lem:fiberwise twist}). Because
the pullback of the relative cotangent complex to the geometric fiber
is the cotangent complex of the fiber ($\iota_s^*\mathbb{L}_{X/S}
\simeq \mathbb{L}_{X_s}$), the restriction of
$(\omega_\Omega)_{rel}^\flat$ to any geometric fiber $X_s$ over $s
\in S(\K)$ is a quasi-isomorphism.

By standard properties of perfect complexes (Theorem
\ref{thm:HAG_perfect_properties}-(\ref{item:perfect_pointwise})),
since $(\omega_\Omega)_{rel}^\flat$ is an equivalence at every
geometric point of $X$, it is a global equivalence in $QCoh(X)$.
Because $(\omega_\Omega)_{rel}^\flat$ is an equivalence, we construct
a canonical homological retraction morphism $\rho: \mathbb{T}_X \to
\mathbb{T}_{X/S}$ via $$\rho := ((\omega_\Omega)_{rel}^\flat)^{-1}
\circ i^\vee{[n]} \circ \omega_\Omega^\flat.$$ This splits the exact
triangle (\ref{eq:tangent_triangle}). Let $\eta: \pi^*\mathbb{T}_S
\to \mathbb{T}_X$ denote the canonical section of $p$ induced by this
splitting. Thus, in $\mathrm{Ho}({QCoh}(X))$, we obtain the canonical
equivalence:
\begin{align} \label{eq:splitting}
  \mathbb{T}_X &\simeq \mathbb{T}_{X/S} \oplus \pi^*\mathbb{T}_S.
\end{align}

\paragraph{Step 3: Block-Diagonalization of the Contact Distribution.}
Let $\omega_S := d_{tot}\sigma \in \mathcal{A}^{2,cl}(S,\cL_S,n)$. Since
$\sigma$ is an $n$-shifted contact form on $S$, it defines a contact
distribution $$\mathcal{K}_S = \mathrm{fib}(\sigma^\vee: \mathbb{T}_S
\to \cL_S[n]),$$ yielding an exact triangle $\mathcal{K}_S
\xrightarrow{j} \mathbb{T}_S \xrightarrow{\sigma^\vee}
\cL_S[n]$. Under the assumption that $S$ is contact,
$d_{tot}\sigma$ restricts to an equivalence
$\omega_{\mathcal{K}_S}^\flat : \mathcal{K}_S \xrightarrow{\sim}
\mathcal{K}_S^\vee \otimes \cL_S[n]$.
This equivalence provides a canonical homological left inverse to the
inclusion $j$, given by the composition $$r_S :=
(\omega_{\mathcal{K}_S}^\flat)^{-1} \circ j^\vee[n] \circ
\omega_S^\flat : \mathbb{T}_S \to \mathcal{K}_S,$$ which satisfies
$r_S \circ j \simeq \mathrm{id}_{\mathcal{K}_S}$ since $j^\vee[n]
\circ \omega_S^\flat \circ j \simeq \omega_{\mathcal{K}_S}^\flat$.
This canonically splits the exact triangle, yielding $\mathbb{T}_S
\simeq \mathcal{K}_S \oplus \cL_S[n]$ globally in $QCoh(S)$.
Pulling this back via $\pi$, we get a splitting of the horizontal
tangent complex $\pi^*\mathbb{T}_S \simeq \pi^*\mathcal{K}_S \oplus
\pi^*\cL_S[n]$. Combining this with \eqref{eq:splitting} yields:
\begin{equation}
  \mathbb{T}_X \simeq \mathbb{T}_{X/S} \oplus \pi^*\mathcal{K}_S
  \oplus \pi^*\cL_S[n].
\end{equation}
We evaluate the candidate contact 1-form $\alpha_X^\vee = \Omega^\vee
+ c \cdot \pi^*\sigma^\vee : \mathbb{T}_X \to \pi^*\cL_S[n]$ with
respect to this splitting. On the $\pi^*\cL_S[n]$ component coming
from the base, $\pi^*\sigma^\vee$ acts as an equivalence. Thus, the
restriction $\alpha_X^\vee|_{\pi^*\cL_S[n]} \simeq
\Omega^\vee|_{\pi^*\cL_S[n]} + c \cdot
\pi^*\sigma^\vee|_{\pi^*\cL_S[n]}$ can be factored as
$\pi^*\sigma^\vee \circ (c \cdot \mathrm{id} + K)$ for a global
endomorphism $K = (\pi^*\sigma^\vee)^{-1} \circ
\Omega^\vee|_{\pi^*\cL_S[n]}$ of the perfect complex $\pi^*\cL_S[n]$.
For a generic choice of scalar $c \in \K^\times$, $c$ avoids the
eigenvalues of $-K$ at generic points, so that $c \cdot \mathrm{id} +
K$ is invertible there, ensuring that $\alpha_X^\vee$ restricted to $\pi^*\cL_S[n]$
is an equivalence onto $\pi^*\cL_S[n]$ over a dense Zariski-open
derived substack $U_1 \subseteq X$.

Therefore, over $U_1$, write $E := \mathbb{T}_{X/S} \oplus
\pi^*\mathcal{K}_S$ and $F := \pi^*\cL_S[n]$, and set $a :=
\alpha_X^\vee|_E: E \to F$ and $b := \alpha_X^\vee|_F \simeq c \cdot
\mathrm{id}_F + K$, which is an equivalence over $U_1$. The morphism
\begin{equation} \label{eq:graph_section}
  \kappa := (\mathrm{id}_E, -b^{-1} \circ a) : E \longrightarrow E
  \oplus F \simeq \mathbb{T}_{U_1}
\end{equation}
satisfies $\alpha_X^\vee \circ \kappa \simeq a - b \circ b^{-1} \circ
a \simeq 0$, and comparing cones shows that it induces an equivalence
\begin{equation} \label{eq:contact_distribution_split}
  \kappa: \mathbb{T}_{X/S} \oplus \pi^*\mathcal{K}_S
  \xrightarrow{\ \sim\ } \mathcal{K}_{U_1}.
\end{equation}
Note that $\kappa$ is the graph of $-b^{-1}\circ a$ rather than the
evident inclusion; this is the only point at which the argument
departs from the symplectic case of \cite[Theorem 4.1]{abi}.

We evaluate $\omega_X^\flat|_{\mathcal{K}_{U_1}} := \kappa^\vee[n]
\circ \omega_X^\flat \circ \kappa$. Since $\pi^*\sigma^\vee \circ i
\simeq 0$ and $\pi^*\sigma^\vee \circ \eta \circ j \simeq 0$, the
morphism $a$ is independent of $c$; and since
$\pi^*(d_{tot}\sigma)^\flat$ factors through $p$, it vanishes on
$\mathbb{T}_{X/S}$ and contributes only the term $c \cdot
\pi^*(\omega_{\mathcal{K}_S})^\flat$ in the lower right corner.
Writing $\iota_E := (i, \eta \circ j)$ for the evident inclusion of
$E$ into $\mathbb{T}_{U_1}$ and $\Delta := \kappa - \iota_E$ for the
correction term of \eqref{eq:graph_section}, we obtain
\begin{equation} \label{eq:alpha_x_block}
  \omega_X^\flat|_{\mathcal{K}_{U_1}} \simeq
  \begin{pmatrix} (\omega_\Omega)_{rel}^\flat & 0 \\ 0 & C + c \cdot
    \pi^*(\omega_{\mathcal{K}_S})^\flat
  \end{pmatrix} + R(c),
\end{equation}
where $C: \pi^*\mathcal{K}_S \to \pi^*\mathcal{K}_S^\vee \otimes
\pi^*\cL_S[n]$
encapsulates the remaining evaluation of $\omega_\Omega$, the diagonal
part being computed exactly as in \cite[Theorem 4.1]{abi} using $\rho
\circ \eta \simeq 0$, and where
\[ R(c) := \iota_E^\vee[n] \circ \omega_X^\flat \circ \Delta +
  \Delta^\vee[n] \circ \omega_X^\flat \circ \iota_E + \Delta^\vee[n]
\circ \omega_X^\flat \circ \Delta \]
collects the terms involving $\Delta$. Each summand of $R(c)$ carries
at least one factor $b^{-1} \simeq (c \cdot \mathrm{id}_F +
K)^{-1}$, and the third carries two. Moreover, the first two summands
have no $c$-linear part: the morphism $\Delta$ factors through the
summand $\pi^*\cL_S[n]$, and the splitting $\mathbb{T}_S \simeq
\mathcal{K}_S \oplus \cL_S[n]$ is $\omega_S$-orthogonal by the
construction of $r_S$, so that $j^\vee[n] \circ \omega_S^\flat$
annihilates the summand $\cL_S[n]$ through which $\Delta$ factors.

\paragraph{Step 4: Generic invertibility and properness.}
Set $u := c^{-1}$ and rescale the second row, i.e. consider
\[ N(u) := \mathrm{diag}(\mathrm{id}, u \cdot \mathrm{id}) \circ
\omega_X^\flat|_{\mathcal{K}_{U_1}} \]
as a family of morphisms of perfect complexes over
$\mathbb{A}^1_{u}$. By the description of $R(c)$ in Step 3 every
entry of $N(u)$ is regular at $u=0$ and the contribution of $R(c)$
vanishes there, so that
\[ N(0) \simeq
  \begin{pmatrix} (\omega_\Omega)_{rel}^\flat & 0 \\ 0 &
    \pi^*(\omega_{\mathcal{K}_S})^\flat
\end{pmatrix}. \]
The first block is an equivalence by Step 2 and the second because
$(S,\sigma)$ is contact, so $N(0)$ is an equivalence. Since the locus
on which a perfect complex is acyclic is open, at each generic point
$\xi_j$ of $X$ the morphism $N(u) \otimes \kappa(\xi_j)$ fails to be
a quasi-isomorphism only for $u$ in a finite subset $F_j \subset
\mathbb{A}^1_{\kappa(\xi_j)}$ with $0 \notin F_j$. It remains to
control these finitely many bad values.
Factoring out $\pi^*(\omega_{\mathcal{K}_S})^\flat$, which is an
equivalence, the non-degeneracy condition on the horizontal block
becomes the invertibility of:
\begin{equation} \label{eq:horizontal_block}
  C + c \cdot \pi^*(\omega_{\mathcal{K}_S})^\flat \simeq
  \pi^*(\omega_{\mathcal{K}_S})^\flat \circ \left( c \cdot
  \mathrm{id}_{\pi^*\mathcal{K}_S} + M \right),
\end{equation}
where $M = (\pi^*(\omega_{\mathcal{K}_S})^\flat)^{-1} \circ C$ is a
global endomorphism of the perfect complex $\pi^*\mathcal{K}_S$.

Since $X$ is a quasi-compact Artin stack over a field of
characteristic zero, its underlying topological space is Noetherian
and has finitely many irreducible components. Let $\xi_1, \dots,
\xi_m$ denote the generic points of these components. By the exact
spectral argument established in the proof of symplectic Thurston
theorem in \cite[Theorem 4.1]{abi}, the eigenvalues of $K$ and $M$ at
these generic points form strictly finite sets $E_j', E_j \subset
\overline{\kappa(\xi_j)}$.
We thus select the constant $c \in \K^\times$ with the property that
$-c \notin \bigcup_{j=1}^m ((E_j' \cup E_j) \cap \K)$ and $c^{-1}
\notin \bigcup_{j=1}^m (F_j \cap \K)$, which is possible since $\K$
is infinite and the excluded set is finite.

With this choice, $c \cdot \mathrm{id} + K$ and $c \cdot \mathrm{id}
+ M$ are equivalences at all generic points $\xi_j$. The vanishing
locus of their cones forms a dense Zariski-open derived Artin
substack $U \subseteq {U_1}$. On $U$, the horizontal block is an
equivalence, making $\omega_X$ non-degenerate on $\mathcal{K}_U$, and
hence establishing Claim \ref{claim_non-degenerate contact dist}. In
total, $\alpha_X$ thus defines \textit{an absolute $n$-shifted
contact structure on $U$.}
\medskip

Furthermore, if $X$ is proper over $\K$, then the perfect complexes
of endomorphisms evaluating generic eigenvalues are global
finite-dimensional $\K$-algebras. A generic choice of $c$ ensures
strict invertibility globally everywhere, so $U = X$ and the contact
structure extends globally.

We then complete the proof of Theorem \ref{thm: proof_thurston in dcg}(b).

\qed

\subsection{Proof of Theorem \ref{thm: proof_thurston in dcg}(c):
Equivalence of the two constructions}
Suppose $\pi\colon X \to S$ is an $n$-shifted exact symplectic
fibration over an (absolute) $n$-shifted contact base $(S, \sigma,
\cL_S[n])$. We demonstrate in this section that the
\textit{homological construction} (Theorem \ref{thm: proof_thurston
in dcg}(b)) of the derived contact Thurston theorem coincides with
the formalism of \textit{equivariant descent via derived
symplectification}  (Theorem \ref{thm: proof_thurston in dcg}(a)),
eliminating potential discrepancies between the two approaches.

Let $p_S\colon \widetilde{S} \to S$ be the derived symplectification
of the base, structured as a principal $\mathbb{G}_m$-bundle
associated with $\mathcal{L}_S$. Define $\widetilde{X} := X \times_S
\widetilde{S}$ to be the pullback along $\pi$, making $p_X\colon
\widetilde{X} \to X$ a principal $\mathbb{G}_m$-bundle, with the diagram
\[
  \begin{tikzcd}
    \tilde{X}:= X \times_S \tilde{S} \arrow[d, "p_X"] \arrow[r,
    "\tilde{\pi}"] & \tilde{S} \arrow[d, "p_S"] \\
    X \arrow[r, "\pi"]                                                  & S.
\end{tikzcd}\]
By the weight dictionary of Notation \ref{notn:twisted}, the twisted
$n$-shifted contact form $\sigma \in \mathcal{A}^{1}(S,
\mathcal{L}_S, n)$ on the base $S$ canonically corresponds to a
weight 1 absolute $n$-shifted 1-form
$\widetilde{\sigma} \in \mathcal{A}^{1}(\widetilde{S}, n)^{\{1\}}$ on
$\widetilde{S}$. The derived symplectification of $S$ carries the
exact $n$-shifted symplectic structure $\omega_{\widetilde{S}} =
d_{dR}\widetilde{\sigma}$.

Under the equivalence between twisted 1-forms on the base $X$
(sections of $\LL_X \otimes \pi^*\mathcal{L}_S[n]$) and weight 1
equivariant horizontal 1-forms on the total space of the principal
$\mathbb{G}_m$-bundle $p_X\colon \widetilde{X} \to X$, the global
compatible twisted 1-form $\Omega$ canonically lifts to a horizontal
1-form $\widetilde{\Omega} \in \mathcal{A}^{1}(\widetilde{X}, n)^{\{1\}}$.

Applying the exact symplectic Thurston theorem \cite[Theorem
4.1]{abi} to the pullback fibration $\tilde{\pi}\colon \widetilde{X}
\to \widetilde{S}$, equipped with the pulled-back fibration structure
as in Section \ref{sec:contact_symp_fib}, the
absolute exact symplectic form constructed on the total space
$\widetilde{X}$ is defined as
\begin{equation}
  \omega_{\widetilde{X}} := d_{dR}\widetilde{\Omega} + c \cdot
  \tilde{\pi}^*\omega_{\widetilde{S}},
\end{equation}
for a generic scalar $c \in \K^\times$.
Substituting the relation $\omega_{\widetilde{S}} = d_{dR}
\widetilde{\sigma}$ and utilizing the commutativity of the derived
pullback $\tilde{\pi}^*$ with the derived de Rham differential
$d_{dR}$, we obtain
\begin{equation}
  \omega_{\widetilde{X}} = d_{dR}\widetilde{\Omega} + c \cdot
  \tilde{\pi}^* (d_{dR} \widetilde{\sigma}) =
  d_{dR}\left(\widetilde{\Omega} + c \cdot \tilde{\pi}^*
  \widetilde{\sigma}\right).
\end{equation}

We set the primitive 1-form on $\widetilde{X}$ as $\widetilde{\alpha}
:= \widetilde{\Omega} + c \cdot \tilde{\pi}^* \widetilde{\sigma}$.
Because $\widetilde{\Omega}$ and $\widetilde{\sigma}$ are both
equivariant eigenforms of weight 1 under the $\Gm$-action, their
linear combination $\widetilde{\alpha}$ evaluates as a weight 1 form,
yielding $\widetilde{\alpha} \in \mathcal{A}^{1}(\widetilde{X},
n)^{\{1\}}$. Therefore, $\omega_{\widetilde{X}} = d_{dR}
\widetilde{\alpha}$ is an exact $n$-shifted symplectic structure
governed by a weight 1 primitive.

The derived transversality theorem (cf. Theorem \ref{thm:derived
transversality}) asserts that the stack quotient of the weight 1
exact symplectic locus $(\widetilde{U},
d_{dR}\widetilde{\alpha}|_{\widetilde{U}})$ --- where $\widetilde{U}
= p_X^{-1}(U)$ is the dense Zariski-open $\mathbb{G}_m$-invariant
non-degeneracy locus of Section \ref{sec:contact_symp_fib} --- by
the $\mathbb{G}_m$-action produces an $n$-shifted contact structure
on $U \subseteq X$. The associated contact form on $U$ is exactly
the $\mathbb{G}_m$-equivariant descent of the weight 1 primitive
$\widetilde{\alpha}$.

Because equivariant descent establishes an equivalence of stable
$\infty$-categories between weight 1 horizontal forms on
$\widetilde{X}$ and twisted forms on $X$, it acts linearly and
preserves additive structures. The descent of the horizontal weight 1
form $\widetilde{\Omega}$ recovers $\Omega$, and the descent of
$\tilde{\pi}^*\widetilde{\sigma} = p_X^* \pi^* \sigma$ recovers
$\pi^*\sigma$. Thus, the geometric descent operation yields precisely
the twisted 1-form
\begin{equation}
  \alpha_X = \Omega + c \cdot \pi^*\sigma \quad \in \pi_0
  \mathbb{R}\Gamma(X, \LL_X \otimes^{\mathbb{L}}_{\cO_X} \pi^*\cL_S[n]).
\end{equation}
\begin{remark}[Equivalence of Constructions]
  \label{rmk:equivalence_of_constructions}
  Let us summarize what we have proved above and give the upshot: The
  two approaches for constructing an absolute shifted contact
  structure on the total space $X$ of an exact symplectic fibration
  over a contact base---\textit{the homological construction and the
    $\mathbb{G}_m$-equivariant derived symplectification
  formalism}---produce identical contact forms up to canonical
  equivalence in the space of twisted 1-forms, removing the necessity
  of an intermediate ``contactomorphism''.  We refer to the companion
  paper \cite{IzbudakBerktav4} for the formal theory of derived
  contactomorphism.
\end{remark}

\begin{remark}[Different Choices of Primitives]
  \label{rmk:contactomorphism_different_primitives}
  Building on the contactomorphism framework established in the
  companion paper \cite{IzbudakBerktav4}, we can address alternative
  choices of the initial compatible 1-form.

  Let $\Omega, \Omega' \in \pi_0 \mathbb{R}\Gamma(X, \LL_X
  \otimes^{\mathbb{L}}_{\cO_X} \pi^*\cL_S[n])$ be two global twisted
  $n$-shifted 1-forms on the total space $X$ of an exact symplectic
  fibration $\pi\colon X \to S$. Suppose both satisfy the fiberwise
  compatibility $\iota_s^*d_{tot}\Omega \sim \omega_s \sim
  \iota_s^*d_{tot}\Omega'$ for each $s \in S(\K)$, and that they differ globally by an internal $d$-exact and
  a de Rham exact discrepancy: $\Omega' - \Omega = d\gamma +
  d_{dR}\psi$, for some 1-form $\gamma$ of internal degree $n-1$ and
  a $d$-closed 0-form $\psi$ of degree $n$.

  The resulting absolute contact forms obtained via the derived
  Thurston method evaluate to $\alpha_X = \Omega + c \cdot
  \pi^*\sigma$ and $\alpha'_X = \Omega' + c \cdot \pi^*\sigma$. Their
  difference evaluates as
  \begin{equation}
    \alpha_X' - \alpha_X = \Omega' - \Omega = d\gamma + d_{dR}\psi.
  \end{equation}

  This relation, in fact, establishes that the identity map
  $\mathrm{id}_X$ constitutes a ``weak contact equivalence'' between
  the two structures in the sense of \cite{IzbudakBerktav4}. As shown
  in  \cite{IzbudakBerktav4}, while it cannot be deformed to a strict
  equivalence algebraically in the global setting, applying the
  formal deformation procedure of \cite{IzbudakBerktav4} canonically
  adjusts the identity map via the formal flow of the derived Reeb
  vector field over the formal power series ring. This yields a
  formal strict $n$-shifted contactomorphism $\widehat{\varphi}\colon
  X \times \Spec \K[[t]] \xrightarrow{\sim} X \times \Spec \K[[t]]$
  interpolating perfectly between the formal extensions of the
  derived contact structures $(X, \pi^*\cL_S[n], \alpha_X)$ and $(X,
  \pi^*\cL_S[n], \alpha'_X)$. This clarifies the formal geometric
  consequence of modifying local and global primitives.
\end{remark}

We then complete the proof of Theorem \ref{thm: proof_thurston in
dcg}(c),   and hence that of Theorem \ref{thm:1}.

\qed

%== =================================
%===================================
%===================================
We close the section with the most obvious (basic) example:
\begin{example}
  If $(X,\lambda_X)$ is an $n$-shifted exact symplectic $\K$-stack
  and $(Y, \cL_Y, \sigma_Y)$ is an $n$-shifted contact $\K$-stack
  with symplectification $p_Y: \widetilde{Y} \rightarrow Y$, then
  $X\times Y$ admits a \bfem{product} $n$-shifted contact form with
  contact line bundle $\pi_Y^*\cL_Y$, where $\pi_X: X\times Y \to X$
  and $\pi_Y:X\times Y \to Y$ are the projection maps. Indeed,
  letting $t$ denote the tautological weight $1$ trivialization of
  $p_Y^*\cL_Y$ on $\widetilde{Y}$, the basic weight $1$ $1$-form $t
  \cdot \pi_X^*\lambda_X$ on $X \times \widetilde{Y}$ descends, by
  the weight dictionary (Notation \ref{notn:twisted}), to a twisted
  1-form $\Omega_{X\times Y} \in \mathcal{A}^1(X\times Y,
  \pi_Y^*\cL_Y, n)$ restricting on each fiber of $\pi_Y$ to a twist
  of $\lambda_X$ (Lemma \ref{lem:fiberwise twist}). Then
  $\alpha_{X\times Y}=\Omega_{X\times Y} + \pi^*_Y \sigma_Y$ is a
  product $n$-shifted contact form on $X \times Y$, and one can
  easily check that $\pi_Y$ is a \bfem{trivial} $\cL_Y$-twisted
  $n$-shifted exact symplectic fibration compatible with
  $\alpha_{X\times Y}$.
\end{example}

\section{Examples} \label{sec:applications}
\subsection{Conormal stacks}
The following results apply the derived contact Thurston theorem to
construct absolute shifted contact structures on the source stacks of
exact symplectic fibrations.% given in \cite[Examples 3.9-3.11]{abi}

Throughout this subsection, $(S, \cL_S, \sigma)$ is an $(n-1)$-shifted
contact derived Artin $\K$-stack and $p_S: \widetilde{S} \rightarrow
S$ denotes its symplectification, i.e. the principal $\mathbb{G}_m$-bundle
of $\cL_S$. For a morphism $f: X \rightarrow S$, we write $\tilde{f}
:= f \times_S \widetilde{S}: \widetilde{X} \rightarrow \widetilde{S}$
for its base change, where $\widetilde{X}=X \times_S \widetilde{S}$. Since the contact line bundle $\cL_S$ is in
general non-trivial and we do not assume coorientability a priori, the
conormal construction must itself be twisted by $\cL_S$; the correct
object is the following quotient, in parallel with the weight
dictionary of Notation \ref{notn:twisted}.

\begin{definition}[Twisted conormal stacks] \label{defn:twisted conormal}
  Let $f: X \rightarrow S$ be a morphism of derived Artin $\K$-stacks
  locally of finite presentation. By functoriality of the $n$-shifted
  conormal stack and the base change $N^*[n]\tilde{f} \simeq N^*[n]f
  \times_S \widetilde{S}$ (Theorem
  \ref{thm:hag_cotangent_properties}-(\ref{item:cotangent_basechange})),
  the $\mathbb{G}_m$-action on $\widetilde{S}$ induces an action on
  $N^*[n]\tilde{f}$; we let $\mathbb{G}_m$ act on $N^*[n]\tilde{f}$
  by the \emph{diagonal} of this action and the weight $1$ scaling of
  the conormal directions. The \bfem{$\cL_S$-twisted $n$-shifted
  conormal stack} of $f$ is the quotient
  \[ N^*_{\cL_S}[n]f := [\, N^*[n]\tilde{f} \,/\, \mathbb{G}_m \,], \]
  a derived stack over $[\widetilde{X}/\mathbb{G}_m] \simeq X$, and
  hence over $S$; write $\pi: N^*_{\cL_S}[n]f \rightarrow S$ for the
  projection and $p: N^*[n]\tilde{f} \rightarrow N^*_{\cL_S}[n]f$ for
  the quotient map. The pair of $p$ and the structure morphism to
  $\widetilde{S}$ identifies $N^*[n]\tilde{f} \simeq N^*_{\cL_S}[n]f
  \times_S \widetilde{S}$, so $p$ is a principal
  $\mathbb{G}_m$-bundle, canonically the pullback of $p_S$ along
  $\pi$, and its associated weight $1$ line bundle is $\pi^*\cL_S$.
  Over a geometric point $s \in S(\K)$, any basis $\ell_s$ of $L_s :=
  s^*\cL_S$ identifies the fiber $\pi^{-1}(s)$ with the shifted
  cotangent stack $T^*[n-1]X_s$ of the fiber $X_s$ (as
  $\mathbb{L}_{\widetilde{X}/\widetilde{S}}$ restricts to
  $\mathbb{L}_{X_s}$), two such identifications differing by the
  weight $1$ scaling (cf. Lemma \ref{lem:fiberwise twist}).
\end{definition}

\begin{proposition}[Conormal Stacks]\label{prop: conormal}
  Let $f: X \rightarrow S$ be a morphism of derived Artin $\K$-stacks
  locally of finite presentation, with $N^*_{\cL_S}[n]f$ and $S$
  quasi-compact, such that $S$ admits an $(n-1)$-shifted contact
  structure with contact line bundle $\cL_S$ and $(n-1)$-contact form
  $\sigma \in \mathcal{A}^1(S, \cL_S, n-1)$. 
  We then have:
  \begin{enumerate}[(i)]
      \item  The morphism $\pi:
  N^*_{\cL_S}[n]f \rightarrow S$ carries a canonical
  $\cL_S$-twisted $(n-1)$-shifted exact symplectic fibration
  structure $\omega = d_{tot}\lambda$ whose fiberwise restriction
  $\omega_s := \iota_s^*\omega$ is identified, by any basis $\ell_s$
  of $L_s$, with the canonical $(n-1)$-shifted exact symplectic
  structure of $T^*[n-1]X_s$ (Lemma \ref{lem:fiberwise twist}).
  \item If, in addition, there exists a $\pi^*\cL_S$-twisted
  $(n-1)$-shifted 1-form $\Omega \in \mathcal{A}^{1}(N^*_{\cL_S}[n]f,
  \pi^*\cL_S, n-1)$ with $\iota_s^*d_{tot}\Omega \sim \omega_s$ in
  $\mathcal{A}^{2,cl}(\pi^{-1}(s), \iota_s^*\pi^*\cL_S, n-1)$ for
  every $s \in S(\K)$.
  Then there exists an $(n-1)$-shifted contact structure, with
  contact line bundle the restriction of $\pi^*\cL_S$, on a dense
  Zariski-open derived substack of the $\cL_S$-twisted $n$-shifted
  conormal $\K$-stack $N^*_{\cL_S}[n]f$, compatible with the twisted
  exact symplectic fibration $\pi: N^*_{\cL_S}[n]f \to S$.
  \end{enumerate}

\end{proposition}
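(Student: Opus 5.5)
The plan is to prove (i) on the $\mathbb{G}_m$-cover first and then descend, and to obtain (ii) by applying Theorem \ref{thm: proof_thurston in dcg} with the shift $n-1$ in place of $n$.

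\emph{Step 1: the relative structure upstairs.} Work with the base-changed morphism $\tilde{f}:\widetilde{X}\to\widetilde{S}$. The relative $n$-shifted conormal stack $\tilde\pi: N^*[n]\tilde{f}\to \widetilde{S}$ is the relative shifted cotangent construction of $\tilde f$ (shifted by $n-1$ along the fibers, as in the standard conormal picture of \cite{Calaque2019}). It carries a tautological relative Liouville $1$-form $\widetilde{\lambda}\in\mathcal{A}^1(N^*[n]\tilde f/\widetilde{S}, n-1)$, namely the universal section of $\mathbb{L}_{\widetilde X/\widetilde S}[n-1]$ pulled back. Its relative de Rham differential $\widetilde{\omega}=d_{dR}\widetilde\lambda$ is non-degenerate. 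To see this, I restrict to the geometric fibers over $\widetilde S$. Each such fiber is $T^*[n-1]X_s$ with its canonical exact symplectic form, using base change of cotangent complexes (Theorem \ref{thm:hag_cotangent_properties}-(\ref{item:cotangent_basechange})). I then conclude via the pointwise criterion of Theorem \ref{thm:HAG_perfect_properties}-(\ref{item:perfect_pointwise}), exactly as in the proof of Theorem \ref{thm:2}.

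\emph{Step 2: equivariance and descent.} With respect to the diagonal $\mathbb{G}_m$-action of Definition \ref{defn:twisted conormal}, $\widetilde\lambda$ has weight $1$. The conormal coordinates are scaled with weight $1$, and the tautological form is linear in them. It is also basic for $p$, since it only involves directions relative to $\widetilde S$, and these are pulled back along the torsor. By the weight dictionary (Notation \ref{notn:twisted}), applied relatively over $S$ with the identification $N^*[n]\tilde f\simeq N^*_{\cL_S}[n]f\times_S\widetilde S$, the form $\widetilde\lambda$ descends to $\lambda\in\mathcal{A}^1(N^*_{\cL_S}[n]f/S,\pi^*\cL_S,n-1)$. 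Non-degeneracy of $\omega=d_{tot}\lambda$ follows from non-degeneracy upstairs, because $p$ is faithfully flat and $\mathbb{L}_{N^*[n]\tilde f/\widetilde S}\simeq p^*\mathbb{L}_{N^*_{\cL_S}[n]f/S}$. The fiberwise identification with $T^*[n-1]X_s$ via a basis $\ell_s$ is then Lemma \ref{lem:fiberwise twist}: a change of basis rescales both the identification and $\omega_s$ by the same unit. This proves (i).

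\emph{Step 3: part (ii).} This is a direct application of Theorem \ref{thm: proof_thurston in dcg}, with shift $n-1$, applied to $\pi: N^*_{\cL_S}[n]f\to(S,\cL_S[n-1],\sigma)$ and the fibration structure from (i). The quasi-compactness and finite presentation hypotheses are assumed, and $\Omega$ is exactly the auxiliary global form that theorem requires. Part (a) or (b) of that theorem then gives a contact form $\alpha=\Omega+c\,\pi^*\sigma$ on a dense Zariski-open $U$, with line bundle $\pi^*\cL_S|_U$ and $\iota_s^*d_{tot}\alpha\sim\omega_s$. That relation is compatibility in the sense of Definition \ref{def:compatibility}.

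\emph{Main obstacle.} The delicate point is Step 2. I need to check that the diagonal action really makes $\widetilde\lambda$ of weight exactly $1$ and basic. The twisting line bundle on $N^*_{\cL_S}[n]f$ must come out as $\pi^*\cL_S$ and not its inverse, so the sign of the scaling weight must be fixed consistently. I would settle this by a local computation in a chart where $\cL_S$ is trivial: there $\widetilde\lambda=\sum_j p_j\,d_{dR}q_j$ with $p_j$ of weight $1$, and the transition functions of $\cL_S$ act on the $p_j$ precisely as sections of $\pi^*\cL_S$.
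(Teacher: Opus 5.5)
Your proof is correct, and from the descent step onward it follows the paper. That means weight-$1$ equivariance for the diagonal action, and descent through $\mathbb{L}_{N^*[n]\tilde{f}/\widetilde{S}}\simeq p^*\mathbb{L}_{N^*_{\cL_S}[n]f/S}$ with non-degeneracy checked after the smooth surjective base change $p$. It then uses Lemma \ref{lem:fiberwise twist} on the fibers and Theorem \ref{thm: proof_thurston in dcg} at shift $n-1$; you correctly write the base as $(S,\cL_S[n-1],\sigma)$.

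Where you genuinely differ is the construction of the relative exact structure upstairs.
\begin{itemize}
\item \emph{Your route.} You identify $N^*[n]\tilde{f}$ with the relative cotangent stack $\mathrm{Tot}_{\widetilde{X}}(\mathbb{L}_{\widetilde{X}/\widetilde{S}}[n-1])$ and take its tautological relative Liouville form. You prove non-degeneracy fiberwise, from the absolute result for $T^*[n-1]X_s$ plus the pointwise criterion of Theorem \ref{thm:HAG_perfect_properties}-(\ref{item:perfect_pointwise}).
\item \emph{The paper's route.} It regards $\tilde{g}\colon N^*[n]\tilde{f}\to T^*[n]\widetilde{S}$ as a Lagrangian and composes it with the Lagrangian fibration $T^*[n]\widetilde{S}\to\widetilde{S}$, invoking Safronov's Proposition 1.10. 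It then extracts the fiberwise forms by a loop-space argument. The relative primitive comes from the same argument applied to the canonical null-homotopy of $\tilde{g}^*\lambda_{\widetilde{S}}$.
\end{itemize}

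Your route is more direct. Exactness and the weight-$1$ property are visible on the tautological form itself, and non-degeneracy reduces to the same pointwise mechanism already used for Theorem \ref{thm:2}. The paper's route gets non-degeneracy for free from the Lagrangian composition principle and PTVV. It is also the formulation the paper reuses verbatim for the $\beta$-deformed conormal stacks of Corollary \ref{cor: conormal}, whereas your tautological-form construction is specific to the undeformed case.

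One small point to add in your Step 2. Because the action is diagonal, weight $1$ also requires the tautological form to be invariant under the functorial lift of the $\mathbb{G}_m$-action on $\widetilde{S}$; naturality of the tautological form gives this. Linearity in the scaled fiber coordinates alone is not enough, though your proposed computation in a chart trivializing $\cL_S$ would cover it.
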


\begin{proof}
  \textbf{Step 1: The untwisted construction over the
  symplectification.} Consider the base change $\tilde{f}:
  \widetilde{X} \rightarrow \widetilde{S}$. The canonical fiber
  sequence $\tilde{f}^*\mathbb{L}_{\widetilde{S}} \rightarrow
  \mathbb{L}_{\widetilde{X}} \rightarrow
  \mathbb{L}_{\widetilde{X}/\widetilde{S}}$ induces an $n$-shifted
  Lagrangian structure on the morphism $\tilde{g}: N^*[n]\tilde{f}
  \longrightarrow T^*[n]\widetilde{S}$, where the target is equipped
  with its canonical $n$-shifted exact symplectic structure
  $d_{dR}\lambda_{\widetilde{S}}$ ($\lambda_{\widetilde{S}}$ being
  the tautological Liouville 1-form \cite[\S 2.1]{Calaque2019}).
  These fit into the commutative diagram:

  \begin{equation}
    \begin{tikzcd}
      N^*[n]\tilde{f} \arrow[r, "\tilde{g}"] \arrow[rd,
      "{\tilde{\pi}}"', dashed] & T^*[n]\widetilde{S}
      \arrow[d, "\pi_{\widetilde{S}}"] \\
      & \widetilde{S}
    \end{tikzcd}
  \end{equation}
  where $\pi_{\widetilde{S}}$ is an $n$-shifted Lagrangian fibration
  \cite[Theorem 2.4]{Calaque2019}. By \cite[Proposition
  1.10]{Safronov2023}, the composition $\tilde{\pi} =
  \pi_{\widetilde{S}} \circ \tilde{g}$ therefore carries an
  $(n-1)$-shifted symplectic fibration structure $\widetilde{\omega}
  \in \mathcal{A}^{2,cl}(N^*[n]\tilde{f}/\widetilde{S}, n-1)$. Its
  fiberwise values are constructed as in the proof of \cite[Theorem
  2.22]{Safronov2019}: for each $\tilde{s} \in \widetilde{S}$,
  consider
  \[
    \begin{tikzcd}
      {
      {N^*[n]\tilde{f} \times_{T^*[n]\widetilde{S}}
      (T^*[n]\widetilde{S})_{\tilde{s}}}} \arrow[rr, "h"]
      \arrow[d, "j_{\tilde{s}}", hook] &  &
      {(T^*[n]\widetilde{S})_{\tilde{s}}} \arrow[r,
      "\pi_{\widetilde{S}}|"]
      \arrow[d, "i_{\tilde{s}}", hook] & \{\tilde{s}\} \arrow[d, "i",
      hook] \\
      {N^*[n]\tilde{f}} \arrow[rr, "\tilde{g}"] \arrow[rrr,
      "\tilde{\pi}=\pi_{\widetilde{S}} \circ \tilde{g}",
      bend right]              &  & {T^*[n]\widetilde{S}} \arrow[r,
      "\pi_{\widetilde{S}}"]
      & \widetilde{S}
    \end{tikzcd}
  \]
  where $i_{\tilde{s}}$ and $j_{\tilde{s}}$ denote the inclusions of
  the fibers of $\pi_{\widetilde{S}}$ and $\tilde{\pi}$ over
  $\tilde{s}$, and $h$ the induced map. The commutativity of the
  left rectangle and the Lagrangian structures on $\tilde{g}$ and
  $i_{\tilde{s}}$ yield a composition of homotopies $$0 \sim
  j_{\tilde{s}}^*\tilde{g}^*d_{dR}\lambda_{\widetilde{S}} \sim
  h^*i_{\tilde{s}}^*d_{dR}\lambda_{\widetilde{S}} \sim 0$$ in
  $\mathcal{A}^{2,cl}(\tilde{\pi}^{-1}(\tilde{s}), n)$; this loop at
  the zero form defines a class in
  $\pi_1(\mathcal{A}^{2,cl}(\tilde{\pi}^{-1}(\tilde{s}), n), 0)$,
  which, by the Dold-Kan correspondence and the loop-space
  equivalence $\Omega\mathcal{A}^{2,cl}(-,n) \simeq
  \mathcal{A}^{2,cl}(-,n-1)$, canonically evaluates to
  $\widetilde{\omega}_{\tilde{s}} \in \pi_0
  \mathcal{A}^{2,cl}(\tilde{\pi}^{-1}(\tilde{s}), n-1)$, which is
  non-degenerate by \cite[Theorem 2.9]{PTVV}. The structure is
  exact: the tautological form vanishes canonically on the conormal,
  i.e. there is a canonical null-homotopy of
  $\tilde{g}^*\lambda_{\widetilde{S}}$, and applying the same
  loop-space argument to $\lambda_{\widetilde{S}}$ in place of
  $d_{dR}\lambda_{\widetilde{S}}$ yields a relative primitive
  $\widetilde{\lambda} \in
  \mathcal{A}^1(N^*[n]\tilde{f}/\widetilde{S}, n-1)$ with
  $d_{dR}\widetilde{\lambda} = \widetilde{\omega}$. In particular,
  each geometric fiber $\tilde{\pi}^{-1}(\tilde{s}) \simeq
  T^*[n-1]X_s$ carries its canonical $(n-1)$-shifted exact
  symplectic structure.
\medskip

  \textbf{Step 2: Equivariance.} Equip $N^*[n]\tilde{f}$ and
  $T^*[n]\widetilde{S}$ with the diagonal $\mathbb{G}_m$-actions of
  Definition \ref{defn:twisted conormal}. By naturality, the
  tautological form $\lambda_{\widetilde{S}}$ is invariant under the
  functorial lift of the base action and has weight $1$ under the
  scaling of the cotangent directions; hence
  $\lambda_{\widetilde{S}}$, and with it,
  $d_{dR}\lambda_{\widetilde{S}}$ and
  $\tilde{g}^*\lambda_{\widetilde{S}}$ are eigenforms of weight $1$
  for the diagonal action. Every ingredient of Step 1 --- $\tilde{g}$,
  $\pi_{\widetilde{S}}$, the Lagrangian structures induced by the
  canonical fiber sequence, the null-homotopy of
  $\tilde{g}^*\lambda_{\widetilde{S}}$, and the loop-space
  construction --- is canonical, hence $\mathbb{G}_m$-equivariant.
  Consequently $\widetilde{\lambda}$ and $\widetilde{\omega}$ are
  eigenforms of weight $1$.
\medskip

  \textbf{Step 3: Descent.} By Definition \ref{defn:twisted
  conormal}, $p: N^*[n]\tilde{f} \rightarrow N^*_{\cL_S}[n]f$ is a
  principal $\mathbb{G}_m$-bundle, the pullback of $p_S$ along $\pi$,
  with associated weight $1$ line bundle $\pi^*\cL_S$; moreover
  $\mathbb{L}_{N^*[n]\tilde{f}/\widetilde{S}} \simeq
  p^*\mathbb{L}_{N^*_{\cL_S}[n]f/S}$ (Theorem
  \ref{thm:hag_cotangent_properties}-(\ref{item:cotangent_basechange})).
  The weight dictionary of Notation \ref{notn:twisted}, applied to
  relative forms through this identification, therefore converts the
  weight $1$ pair $(\widetilde{\lambda}, \widetilde{\omega})$ into a
  $\pi^*\cL_S$-twisted pair
  \[ \lambda \in \mathcal{A}^1(N^*_{\cL_S}[n]f/S, \pi^*\cL_S, n-1),
  \qquad \omega := d_{tot}\lambda \in
  \mathcal{A}^{2,cl}(N^*_{\cL_S}[n]f/S, \pi^*\cL_S, n-1). \]
  Relative non-degeneracy may be checked after the smooth surjective
  base change $p$, where it is the non-degeneracy of
  $\widetilde{\omega}$ established in Step 1; hence $(\lambda,
  \omega)$ is a $\cL_S$-twisted $(n-1)$-shifted exact symplectic
  fibration structure on $\pi$ in the sense of Definition
  \ref{defn_shifted symp fibration}. Restricting to a geometric
  fiber, the trivialization of $\iota_s^*\pi^*\cL_S$ by a basis
  $\ell_s = \tilde{s}$ recovers $\widetilde{\omega}_{\tilde{s}}$,
  the canonical structure of $T^*[n-1]X_s$; this proves the first
  assertion $(i)$.
\medskip 

  \textbf{Step 4: The contact Thurston theorem.} The pair $(\lambda,
  \omega)$ and the assumed twisted 1-form $\Omega$ with
  $\iota_s^*d_{tot}\Omega \sim \omega_s$ constitute precisely the
  hypotheses of Theorem \ref{thm: proof_thurston in dcg} for the morphisms $\pi:
  N^*_{\cL_S}[n]f \rightarrow (S, \cL_S[n], \sigma)$. By Theorem
  \ref{thm: proof_thurston in dcg}, with a suitable choice of a
  constant $c \in \K$, the twisted 1-form $\alpha_{N^*_{\cL_S}[n]f}
  := \Omega + c \cdot \pi^*\sigma$ satisfies
  $\alpha_{N^*_{\cL_S}[n]f} \in \mathsf{Cont}(U, n-1)$ for a dense
  open substack $U \subseteq N^*_{\cL_S}[n]f$, with contact line
  bundle $\pi^*\cL_S|_U$, making it a compatible $(n-1)$-shifted
  contact structure. If $N^*_{\cL_S}[n]f$ is proper, this extends
  globally. This establishes the second
  assertion $(ii)$ and concludes the proof.

\end{proof}

Note that the above result extends, by the same descent, to the
$\beta$-deformed conormal stacks $N_{\beta}^*[n] f \to
T_{\beta}^*[n]S$ of \cite{abi} for each $n$, provided the
deformation parameter is itself twisted. Let us sketch the proof: for $\beta \in
\mathcal{A}^{1,cl}(S, \cL_S, n+1)$, the weight $1$ avatar
$\widetilde{\beta} \in \mathcal{A}^{1,cl}(\widetilde{S},
n+1)^{\{1\}}$ of $\beta$ makes the graph
$\Gamma_{\widetilde{\beta}}: \widetilde{S} \rightarrow
T^*[n+1]\widetilde{S}$ \ $\mathbb{G}_m$-equivariant for the diagonal
action, so the construction of Steps 1--3 above applies to
$N^*_{\widetilde{\beta}}[n]\tilde{f}$ verbatim and descends to the
quotient $N^*_{\cL_S,\beta}[n]f :=
[N^*_{\widetilde{\beta}}[n]\tilde{f}/\mathbb{G}_m]$. One thus
readily establishes:

\begin{corollary}[Deformed conormal stacks] \label{cor: conormal}
  Let $f: X \rightarrow S$ be as in Prop. \ref{prop:
  conormal}, $\beta \in \mathcal{A}^{1,cl}(S, \cL_S, n+1)$, and
  suppose $N^*_{\cL_S,\beta}[n]f$ is quasi-compact. Then $\pi:
  N^*_{\cL_S,\beta}[n]f \rightarrow S$ carries a canonical
  $\cL_S$-twisted $(n-1)$-shifted exact symplectic fibration
  structure $\omega$. If, in addition, there exists a
  $\pi^*\cL_S$-twisted $(n-1)$-shifted 1-form $\Omega \in
  \mathcal{A}^{1}(N^*_{\cL_S,\beta}[n]f, \pi^*\cL_S, n-1)$ with
  $\iota_s^*d_{tot}\Omega \sim \omega_s := \iota_s^*\omega$ for
  every $s \in S(\K)$, then there exists an $(n-1)$-shifted contact
  structure, with the contact line bundle as the restriction of
  $\pi^*\cL_S$, on a dense Zariski-open derived substack of
  $N^*_{\cL_S,\beta}[n]f$, compatible with the fibration map $\pi$. \qed
\end{corollary}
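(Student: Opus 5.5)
The plan is to run Steps 1--4 of the proof of Proposition \ref{prop: conormal} verbatim, replacing the undeformed conormal stack by the $\widetilde{\beta}$-deformed one, and then invoke Theorem \ref{thm: proof_thurston in dcg}. The key input is the weight dictionary of Notation \ref{notn:twisted} applied to closed forms. It turns $\beta \in \mathcal{A}^{1,cl}(S,\cL_S,n+1)$ into a basic weight $1$ closed form $\widetilde{\beta} \in \mathcal{A}^{1,cl}(\widetilde{S},n+1)^{\{1\}}$.

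First I will check that the graph $\Gamma_{\widetilde{\beta}}\colon \widetilde{S} \to T^*[n+1]\widetilde{S}$ is $\mathbb{G}_m$-equivariant. The action on the source is the principal one; on the target it is the diagonal of the functorial lift and the weight $1$ scaling of the cotangent directions. Equivariance holds because $\widetilde{\beta}$ has weight $1$. Consequently the twisted cotangent stack $T^*_{\widetilde{\beta}}[n]\widetilde{S}$, defined as the derived intersection of $\Gamma_{\widetilde{\beta}}$ with the zero section, carries an induced diagonal action. The same holds for $N^*_{\widetilde{\beta}}[n]\tilde f$, built from the fiber sequence $\tilde f^*\mathbb{L}_{\widetilde S}\to \mathbb{L}_{\widetilde X}\to\mathbb{L}_{\widetilde X/\widetilde S}$ twisted by $\widetilde\beta$ as in \cite{abi}.

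Next I repeat Step 1 of Proposition \ref{prop: conormal}. The map $N^*_{\widetilde{\beta}}[n]\tilde f \to T^*_{\widetilde{\beta}}[n]\widetilde{S}$ is Lagrangian, and $T^*_{\widetilde{\beta}}[n]\widetilde{S}\to\widetilde{S}$ is a Lagrangian fibration (it is a torsor over the undeformed one, locally isomorphic to it). By \cite[Proposition 1.10]{Safronov2023}, the composite to $\widetilde{S}$ then carries an $(n-1)$-shifted symplectic fibration structure. Its fibers are again $T^*[n-1]X_s$, because the deformation only shifts the fiber by the value of $\widetilde\beta$ at $\tilde s$, which is a translation. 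For exactness I take the loop-space construction applied to the tautological form. The deformation changes the null-homotopy of $\tilde g^*\lambda_{\widetilde S}$ by the pullback of $\widetilde\beta$, which is basic and therefore dies in relative forms. This yields a relative primitive $\widetilde\lambda$.

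Step 2 (equivariance) is unchanged: every ingredient is canonical, and $\widetilde\beta$ has weight $1$. Step 3 (descent) is also unchanged. The quotient map to $N^*_{\cL_S,\beta}[n]f$ is the pullback of $p_S$ along $\pi$, so the weight $1$ pair $(\widetilde\lambda,\widetilde\omega)$ descends to a $\pi^*\cL_S$-twisted pair. Relative non-degeneracy can be checked after the smooth cover. This gives the canonical twisted exact symplectic fibration structure. Finally, the quasi-compactness of $N^*_{\cL_S,\beta}[n]f$ and $S$, together with the assumed $\Omega$, give exactly the hypotheses of Theorem \ref{thm: proof_thurston in dcg}, with the shift $n-1$. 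Parts (a) and (b) of that theorem then produce the compatible contact form $\Omega + c\,\pi^*\sigma$ on a dense Zariski-open substack.

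The main obstacle I expect is the exactness of the fiberwise structure in the deformed case, namely showing that the $\widetilde\beta$-correction to the null-homotopy is horizontal, so that it vanishes relatively. I would attack it by writing the null-homotopy locally in a Darboux chart, as in Example \ref{model example}, and verifying that the correction term lies in the image of $\tilde\pi^*\mathbb{L}_{\widetilde S}$.
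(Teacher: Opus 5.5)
Your proposal is correct in outline and follows the paper's own sketch: take the weight $1$ avatar $\widetilde\beta$, use the $\mathbb{G}_m$-equivariance of $\Gamma_{\widetilde\beta}$ for the diagonal action, rerun Steps 1--3 of Proposition \ref{prop: conormal} on $N^*_{\widetilde\beta}[n]\tilde f$, descend to $N^*_{\cL_S,\beta}[n]f$, and apply Theorem \ref{thm: proof_thurston in dcg}; your point that the $\widetilde\beta$-correction to the primitive is pulled back from the base, and so vanishes relatively, usefully spells out what the paper compresses into ``verbatim''. One caution: your parenthetical claims fail wherever $\widetilde\beta$ has non-zero value in $H^{n+1}$ of the cotangent fiber, namely that $T^*_{\widetilde\beta}[n]\widetilde S\to\widetilde S$ is a torsor locally isomorphic to $T^*[n]\widetilde S$ and that every fiber is a translate of $T^*[n-1]X_s$ (e.g.\ for $n=-1$ it is the derived zero locus of $\widetilde\beta$, with empty fibers elsewhere), so you should justify the Lagrangian fibration property directly, and the statement itself never needs the fiber identification.
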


\subsection{Contact AKSZ formalism and quotient mapping stacks}

Let us first start with a \textit{non-example} in which the derived symplectic
Thurston theorem is applicable, but its contact counterpart, Theorem
\ref{thm: proof_thurston in dcg}, is not. This is actually another
situation in which the current contact setting and arguments diverge
from the symplectic case of \cite[Theorem 4.1]{abi}.

\begin{remark}[Moment map quotients] \label{rmk: moment quotient}
  The quotient of an exact shifted symplectic stack by a moment map
  is not an application of Theorem \ref{thm: proof_thurston in dcg},
  and we record here what the symplectic Thurston theorem does give.
  Let $G$ be reductive with a non-degenerate invariant pairing on
  $\mathfrak{g}$, so that $BG$ carries a $2$-shifted symplectic
  structure $\omega_{BG}$, let $(X,\lambda_X)$ be an exact
  $2$-shifted symplectic derived Artin $\K$-stack with a smooth
  $G$-action, and let $\mu: X \rightarrow \mathfrak{g}^*[2]$ be a
  $2$-shifted moment map, with $X/G$ and $BG$ quasi-compact. By
  \cite[Example 3.11]{abi} the composition
  $$\Pi: X/G \longrightarrow \mathfrak{g}^*[2]/G \simeq T^*[3]BG
  \longrightarrow (BG, \omega_{BG})$$ carries a $2$-shifted exact
  symplectic fibration structure, and since $\mu$ is a moment map
  there is an equivalence of $2$-shifted exact symplectic stacks
  $X/G \times_{T^*[3]BG} \mathfrak{g}^*[2] \simeq X$ thanks to
  \cite{GrataloupPHD}, so that the exact structure on each fiber of
  $\Pi$ is $d_{tot}\lambda_X$. If there is a global absolute
  $2$-shifted 1-form $\Omega$ on $X/G$ whose de Rham differential
  restricts on each fiber to $d_{tot}\lambda_X$, then \cite[Theorem
  4.1]{abi} produces an absolute $2$-shifted \emph{symplectic}
  structure $\omega_{X/G} := d_{dR}\Omega + c \cdot
  \Pi^*\omega_{BG}$ on a dense Zariski-open derived substack $U
  \subseteq X/G$, for a generic $c \in \K$.

  The base $BG$ here is symplectic and not contact, so Theorem
  \ref{thm: proof_thurston in dcg} does not apply and no contact
  structure on $X/G$ arises from this route. If $X/G$ carries in
  addition a $\mathbb{G}_m$-action for which $U$ may be chosen
  $\mathbb{G}_m$-invariant and $\omega_{X/G}$ is an eigenform of
  weight $1$ on $U$, then \cite[Theorem
  3.7]{IzbudakBerktav2026} endows $[U/\mathbb{G}_m]$ with a
  $2$-shifted contact structure; note that the shift is preserved by
  this descent, and is not lowered to $1$.
\end{remark}

As a genuine example of our framework, the \textit{quotient mapping stack construction}
described in \cite[Example 3.10]{abi} can be naturally upgraded to an
application of the contact derived Thurston theorem.

We use the following notion of ``Legendrian fibrations'', to be
treated systematically in the forthcoming companion paper
\cite{abiRel}: In brief, by an
\bfem{$n$-shifted Legendrian fibration structure} on a morphism $p:
Y \rightarrow S$ of derived Artin $\K$-stacks, we mean a twisted $1$-form data $(\alpha_Y, \cL_Y[n])$ of an $n$-shifted
contact structure on $Y$ (non-degeneracy \bfem{not} assumed), together with
a trivialization of the relative form $\alpha_{Y/S}$ such that the
induced sequence $\mathbb{T}_{Y/S} \rightarrow \mathcal{K}_Y
\rightarrow \mathbb{L}_{Y/S}[n]$ is a stable fiber sequence. The
total space of a Legendrian fibration is then automatically
genuinely $n$-shifted contact, and the fiber inclusions are
$n$-shifted Legendrian \cite{abiRel}; the
composition principle for a Legendrian morphism followed by a
Legendrian fibration --- a \textit{relative version of the Legendrian
intersection theorem of \cite{IzbudakBerktav2026}} --- will appear
there as well. Below we construct its twisted exact symplectic
avatar on quotient mapping stacks, as required by Theorem \ref{thm:
proof_thurston in dcg}. Since the notion of
\cite{abiRel} places no constraint on the base, the
interaction with the contact structure on $S$ enters through one
additional hypothesis: the twisting line bundle of $Y$ is pulled
back, $\cL_Y \simeq p^*\cL_S$.

\begin{proposition}[Quotient Mapping Stacks] \label{prop:
  mapping_stack_thurston}
  Let $Y$ be an $n$-shifted contact derived Artin $\K$-stack and $X$
  be a smooth projective Calabi-Yau $m$-fold. Suppose $f: L
  \rightarrow Y$ is an $n$-shifted Legendrian morphism and $p: Y
  \rightarrow S$ is an $n$-shifted Legendrian fibration in the sense
  above (cf. \cite{abiRel}), such that the base $S$ admits an
  $(n-1)$-shifted contact structure $\sigma_S$ with the contact line
  bundle $\cL_S$, and the twisting line bundle of $Y$ is pulled back
  from the base, $\cL_Y \simeq p^*\cL_S$.

  Let $\widetilde{Y}, \widetilde{S}$ denote the \emph{derived
  symplectifications} of $Y, S$, and set $\widetilde{L} := L \times_Y
  \widetilde{Y}$, the principal $\mathbb{G}_m$-bundle of $f^*\cL_Y$;
  by the hypothesis $\cL_Y \simeq p^*\cL_S$, we have $\widetilde{Y}
  \simeq Y \times_S \widetilde{S}$ and $\widetilde{L} \simeq L
  \times_S \widetilde{S}$. The data fits into the diagram
  \[\begin{tikzcd}
\widetilde{L} \arrow[rr, "\tilde{f}"] \arrow[rrrd] \arrow[d] &  & \widetilde{Y} \arrow[rd, "\tilde{p}"] \arrow[d] &                         \\
L \arrow[rr, "f"] \arrow[rrrd, "p\circ f"']                  &  & Y \arrow[rd, "p"]                               & \widetilde{S} \arrow[d] \\
    &  &                                                 & S.                     
\end{tikzcd}\]
  
  We define
  the \bfem{quotient mapping stacks} $L_X := [\mathsf{Map}(X,
  \widetilde{L})/\mathbb{G}_m]$, \,$Y_X := [\mathsf{Map}(X,
  \widetilde{Y})/\mathbb{G}_m]$, and $S_X := [\mathsf{Map}(X,
  \widetilde{S})/\mathbb{G}_m]$.

  Assume additionally that the quotient mapping stacks $L_X$ and
  $S_X$ are quasi-compact (e.g., by restricting to appropriate open
  substacks or connected components), that the quotient mapping stack
  $S_X$ admits an $(n-m-1)$-shifted contact structure $\sigma_{S_X}$
  whose contact line bundle $\cL_{S_X}$ is the weight $1$ line bundle
  associated with the principal $\mathbb{G}_m$-bundle
  $\mathsf{Map}(X, \widetilde{S}) \rightarrow S_X$ (e.g., the
  structure furnished by \cite[Theorem A]{IzbudakBerktav3} whenever
  it applies), and that the induced $(n-1)$-shifted exact symplectic
  fibration $\tilde{p} \circ \tilde{f}: \widetilde{L} \rightarrow
  \widetilde{S}$ admits a compatible absolute $(n-1)$-shifted 1-form
  $\Omega_{\widetilde{L}}$ of weight 1 on $\widetilde{L}$.
  \medskip

  Then there is an $(n-m-1)$-shifted contact structure, with contact
  line bundle being the restriction of $\Pi^*\cL_{S_X}$, on a dense
  Zariski-open derived substack of the quotient mapping stack $L_X$
  which is compatible with the induced $\cL_{S_X}$-twisted exact
  symplectic fibration $\Pi: L_X \to S_X$.
\end{proposition}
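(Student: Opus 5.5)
The plan is to reduce the statement to Theorem \ref{thm: proof_thurston in dcg}, applied to $\Pi: L_X \to S_X$ with shift $n-m-1$. The work is to build the $\cL_{S_X}$-twisted $(n-m-1)$-shifted exact symplectic fibration structure on $\Pi$, and a compatible twisted 1-form $\Omega_{L_X}$, by working $\mathbb{G}_m$-equivariantly upstairs on $\mathsf{Map}(X,\widetilde{L}) \to \mathsf{Map}(X,\widetilde{S})$ and then descending, exactly as in Steps 2--3 of the proof of Proposition \ref{prop: conormal}.

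\emph{Upstairs structure.} First, I would check that $\tilde{p}\circ\tilde{f}: \widetilde{L} \to \widetilde{S}$ carries an $(n-1)$-shifted exact symplectic fibration structure $(\widetilde{\lambda},\widetilde{\omega})$ of weight $1$. The Legendrian fibration $p$ lifts along the symplectification to a Lagrangian fibration $\tilde{p}: \widetilde{Y}\to\widetilde{S}$, since $\widetilde{Y}\simeq Y\times_S\widetilde{S}$ by $\cL_Y\simeq p^*\cL_S$. Likewise, $f$ lifts to a $\mathbb{G}_m$-equivariant Lagrangian $\tilde{f}$, via the lifting dictionary of \cite{IzbudakBerktav2026}. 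Composing a Lagrangian with a Lagrangian fibration gives, by \cite[Proposition 1.10]{Safronov2023}, an $(n-1)$-shifted relative symplectic structure, obtained by the loop-space construction used in Step 1 of Proposition \ref{prop: conormal}. Running that construction on the weight-$1$ primitive $\widetilde{\alpha}_Y$, together with the null-homotopy $\tilde{f}^*\widetilde{\alpha}_Y\simeq 0$ and the trivialization of $\alpha_{Y/S}$, produces a relative primitive $\widetilde{\lambda}$. Every ingredient is canonical, hence of weight $1$.

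\emph{Transgression and descent.} Next, I would apply relative AKSZ. The Calabi--Yau orientation $[X]$ of degree $m$ and $\int_{[X]}\mathrm{ev}^*(-)$ transgress relative forms on $\widetilde{L}/\widetilde{S}$ to relative forms on $\mathsf{Map}(X,\widetilde{L})/\mathsf{Map}(X,\widetilde{S})$ of shift $n-1-m$. Non-degeneracy is preserved, as in \cite[Theorem 2.5]{PTVV} in its relative form. Integration commutes with $d_{dR}$, so exactness is preserved and $\int_{[X]}\mathrm{ev}^*\widetilde{\lambda}$ is a primitive. Integration is also $\mathbb{G}_m$-equivariant because $\mathbb{G}_m$ acts on the target only, so weight $1$ is preserved. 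The same transgression sends $\Omega_{\widetilde{L}}$ to a weight-$1$ absolute form $\widetilde{\Omega}$ on $\mathsf{Map}(X,\widetilde{L})$. Its fiberwise compatibility holds because the fiber of $\mathsf{Map}(X,\widetilde{L})\to \mathsf{Map}(X,\widetilde{S})$ over a point $\phi$ is the mapping stack of sections of the pulled-back fibration, and transgression commutes with restriction to fibers. The map $\mathsf{Map}(X,\widetilde{L})\to L_X$ is a principal $\mathbb{G}_m$-bundle, pulled back from $\mathsf{Map}(X,\widetilde{S})\to S_X$. So the weight dictionary (Notation \ref{notn:twisted}) together with base change of relative cotangent complexes descends everything to a $\Pi^*\cL_{S_X}$-twisted exact symplectic fibration structure on $\Pi$ and a twisted form $\Omega_{L_X}$ with $\iota_s^*d_{tot}\Omega_{L_X}\sim\omega_s$. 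Relative non-degeneracy is checked after the smooth surjective base change.

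\emph{Conclusion and main obstacle.} With $L_X, S_X$ quasi-compact and $(S_X,\sigma_{S_X},\cL_{S_X})$ contact, Theorem \ref{thm: proof_thurston in dcg} then yields $\alpha = \Omega_{L_X}+c\,\Pi^*\sigma_{S_X}$, contact on a dense Zariski-open substack and compatible with $\Pi$. I expect the main obstacle to be the first step: justifying that the Legendrian fibration $p$ really lifts to a weight-$1$ \emph{exact} Lagrangian fibration $\tilde{p}$ whose composition with $\tilde{f}$ has a canonical relative primitive. This uses the not-yet-published framework of \cite{abiRel}. I would first try to prove it directly from the stable fiber sequence $\mathbb{T}_{Y/S}\to\mathcal{K}_Y\to\mathbb{L}_{Y/S}[n]$, pulled back along $p_Y$, identifying $\mathcal{K}$ with the weight-zero part of $\mathbb{T}_{\widetilde{Y}}$ modulo the Euler field.
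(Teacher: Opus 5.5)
\textbf{Main gap.} The step you flag as the main obstacle does not just lack a proof; it is false. So your first paragraph cannot be carried out as planned.

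Because $\widetilde{Y}\simeq Y\times_S\widetilde{S}$, base change gives $\mathbb{T}_{\widetilde{Y}/\widetilde{S}}\simeq p_Y^*\mathbb{T}_{Y/S}$. Combine the Legendrian fibration sequence $\mathbb{T}_{Y/S}\to\mathcal{K}_Y\to\mathbb{L}_{Y/S}\otimes\cL_Y[n]$ with $\mathcal{K}_Y\to\mathbb{T}_Y\to\cL_Y[n]$ and $\mathbb{T}_{Y/S}\to\mathbb{T}_Y\to p^*\mathbb{T}_S$, apply the octahedral axiom and dualize. The result is a cofiber sequence $\mathcal{O}_Y\to p^*\mathbb{L}_S\otimes\cL_Y[n]\to\mathbb{T}_{Y/S}$. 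So on $\widetilde{Y}$, where the twist is trivialized, $\mathbb{T}_{\widetilde{Y}/\widetilde{S}}$ is $p^*\mathbb{L}_S[n]$ with a line cut out. A Lagrangian fibration over $\widetilde{S}$ would instead need $\tilde{p}^*\mathbb{L}_{\widetilde{S}}[n]$, which is $p^*\mathbb{L}_S[n]$ extended by the line $\mathcal{O}[n]$ dual to the $\mathbb{G}_m$-direction of $\widetilde{S}\to S$. In general these do not match.

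A concrete instance: take $Y=S=\mathbb{G}_a[n]$ with $\alpha_Y=d_{dR}z$, $p=\mathrm{id}$ (a Legendrian fibration whose fibers are Legendrian points), and $L=\{0\}$. No structure on $S$ enters this check. Then $\tilde{p}$ is the identity of the $n$-shifted symplectic $\widetilde{Y}$, whose fibers are points, not Lagrangians. The fiber of $\tilde{p}\circ\tilde{f}$ is $\{0\}\times_{\mathbb{G}_a[n]}\{0\}\simeq\mathbb{G}_a[n-1]$, which carries no $(n-1)$-shifted symplectic structure.

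In general, the fiber of $\tilde{p}\circ\tilde{f}$ over $\tilde{s}$ is $L\times_Y Y_s$. This is an intersection of two Legendrians, hence $(n-1)$-shifted \emph{contact} by the Legendrian intersection theorem, not exact symplectic. What the symplectic--contact dictionary actually provides is a Lagrangian fibration $\widetilde{Y}\to S$: its relative tangent is $p_Y^*\mathbb{T}_{Y/S}$ extended by the $\mathbb{G}_m$-line, which does match $p^*\mathbb{L}_S[n]$. Composing with $\tilde{f}$ via \cite[Proposition 1.10]{Safronov2023} then gives a symplectic fibration $\widetilde{L}\to S$ with fibers the symplectifications $\widetilde{L_s}$. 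That is not the fibration over $\widetilde{S}$ your argument needs.

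\textbf{Relation to the paper.} The paper's proof asserts the same lift, citing ``the symplectic--contact dictionary'', so you have not missed something the paper supplies. It does mean the argument only closes if a weight-$1$ exact symplectic fibration structure on $\tilde{p}\circ\tilde{f}$ is taken as an extra hypothesis. That is one possible reading of ``the induced \dots\ fibration \dots\ admits''. You should not expect it to hold in natural examples; it fails in the one above.

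Under that reading, your remaining steps go through, and your ordering is better than the paper's. You transgress the relative exact pair on $\widetilde{L}/\widetilde{S}$ directly by relative AKSZ. The paper instead transgresses $\tilde{f}$ and $\tilde{p}$ separately and composes on the mapping stacks, which again requires the nonexistent Lagrangian fibration structure on $\tilde{p}$. Your route also keeps $\Omega_{\widetilde{L}}$ and the relative structure at the same level, so you never need to check that transgression commutes with Safronov's composition.

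\textbf{A smaller repair.} Over a non-constant $\phi\colon X\to\widetilde{S}$, the fiber of $\mathsf{Map}(X,\widetilde{L})\to\mathsf{Map}(X,\widetilde{S})$ involves the whole family $\phi^*\widetilde{L}\to X$. So the pointwise homotopies $\iota_{\tilde{s}}^*d_{tot}\Omega_{\widetilde{L}}\sim\widetilde{\omega}_{\tilde{s}}$ do not transgress as they stand. What does transfer is this: the vertical part $(d_{tot}\Omega_{\widetilde{L}})^\flat_{rel}$ is an equivalence on all of $\widetilde{L}$ by pointwise conservativity, so its transgression is an equivalence by Serre duality. That non-degeneracy is all that compatibility in the sense of Definition~\ref{def:compatibility} and the non-degeneracy argument of Theorem~\ref{thm: proof_thurston in dcg} actually use.
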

\begin{proof}
  By the symplectic--contact dictionary (cf.
  \cite{IzbudakBerktav2026}), the Legendrian structure on $f$ and the
  Legendrian fibration structure on $p$, together with the
  identification $\cL_Y \simeq p^*\cL_S$, lift to a
  $\mathbb{G}_m$-equivariant $n$-shifted Lagrangian structure on
  $\tilde{f}: \widetilde{L} \rightarrow \widetilde{Y}$ and a
  $\mathbb{G}_m$-equivariant $n$-shifted Lagrangian fibration
  structure on $\tilde{p}: \widetilde{Y} \rightarrow \widetilde{S}$.
  Applying the AKSZ transgression, as established by the derived
  contact AKSZ formalism \cite{IzbudakBerktav3} (see also Section
  \ref{sec:terminology}), the morphism $\mathsf{Map}(X,
  \widetilde{L}) \rightarrow \mathsf{Map}(X, \widetilde{Y})$ carries
  an equivariant $(n-m)$-shifted Lagrangian structure and
  $\mathsf{Map}(X, \widetilde{Y}) \rightarrow \mathsf{Map}(X,
  \widetilde{S})$ an equivariant $(n-m)$-shifted Lagrangian fibration
  structure, all transgressed AKSZ data being
  eigenforms of weight $1$, since transgression along the fundamental
  class $[X]$ is linear in the weight $1$ symplectic data of the
  symplectifications. By \cite[Proposition 1.10]{Safronov2023},
  applied upstairs, the composition $\widetilde{\Pi}: \mathsf{Map}(X,
  \widetilde{L}) \rightarrow \mathsf{Map}(X, \widetilde{S})$
  therefore carries an $(n-m-1)$-shifted symplectic fibration
  structure, exact with relative primitive the transgressed Liouville
  form; by canonicity, this exact pair is an eigenform of weight $1$.
  Taking the stack quotient by $\mathbb{G}_m$ yields the diagram
  \begin{equation*}
    \begin{tikzcd}
      L_X \arrow[r, "f_X"] \arrow[rd, "\Pi"', dashed] & Y_X \arrow[d, "p_X"] \\
      & S_X.
    \end{tikzcd}
  \end{equation*}

  Since $\widetilde{L} \simeq L \times_S \widetilde{S}$ by the
  hypothesis $\cL_Y \simeq p^*\cL_S$, and $\mathsf{Map}(X,-)$
  preserves this fiber product, the quotient map $\mathsf{Map}(X, \widetilde{L})
  \rightarrow L_X$ is a principal $\mathbb{G}_m$-bundle, canonically
  the pullback along $\Pi$ of $\mathsf{Map}(X, \widetilde{S})
  \rightarrow S_X$, with associated weight $1$ line bundle
  $\Pi^*\cL_{S_X}$. Exactly as in Steps 2--3 of the proof of
  Proposition \ref{prop: conormal}, the weight dictionary (Notation
  \ref{notn:twisted}) therefore converts the weight $1$ exact pair
  above into an $\cL_{S_X}$-twisted $(n-m-1)$-shifted exact
  symplectic fibration structure on $\Pi: L_X \rightarrow S_X$.

  Because we assume that the induced fibration $\tilde{p} \circ
  \tilde{f}: \widetilde{L} \rightarrow \widetilde{S}$ admits a
  compatible absolute $(n-1)$-shifted 1-form $\Omega_{\widetilde{L}}$
  of weight 1, the AKSZ transgression functional allows one to simply
  set $$\Omega_{\mathsf{Map}} := \int_{[X]} \mathrm{ev}^*
  \Omega_{\widetilde{L}},$$ an $(n-m-1)$-shifted 1-form of weight $1$
  on $\mathsf{Map}(X, \widetilde{L})$, which descends, again by the
  weight dictionary, to a twisted 1-form $\Omega \in
  \mathcal{A}^{1}(L_X, \Pi^*\cL_{S_X}, n-m-1)$ with
  $\iota_s^*d_{tot}\Omega \sim \omega_s := \iota_s^*\omega$ for every
  $s \in S_X(\K)$, any trivialization of the line
  $\iota_s^*\Pi^*\cL_{S_X}$ identifying $\omega_s$ with the canonical
  $(n-m-1)$-shifted exact symplectic structure of the fiber (Lemma
  \ref{lem:fiberwise twist}).

  As assumed, the target quotient mapping stack $S_X$ is equipped
  with the $(n-m-1)$-shifted contact structure $\sigma_{S_X}$, with
  contact line bundle $\cL_{S_X}$.

  In total, we end up with the situation where the map $$\Pi: L_X
  \longrightarrow (S_X, \cL_{S_X}[n-m-1], \sigma_{S_X})$$ is an
  $\cL_{S_X}$-twisted $(n-m-1)$-shifted exact symplectic fibration
  over an $(n-m-1)$-shifted contact base, along with a global
  $\Pi^*\cL_{S_X}$-twisted $(n-m-1)$-shifted 1-form $\Omega$ on $L_X$
  satisfying the fiberwise compatibility condition. This matches the
  setup of the derived contact Thurston theorem (Theorem \ref{thm:
  proof_thurston in dcg}), provided we restrict to quasi-compact open
  substacks.

  Theorem \ref{thm: proof_thurston in dcg} then applies, and we
  conclude that for a generic scalar $c \in \K$, the twisted 1-form
  $$\alpha_{L_X} := \Omega + c \cdot \Pi^*\sigma_{S_X}$$ defines an
  absolute $(n-m-1)$-shifted contact structure, with contact line
  bundle the restriction of $\Pi^*\cL_{S_X}$, on a dense
  Zariski-open derived substack of $L_X$.
\end{proof}

\subsection{Affine exact symplectic fibrations}\label{sec:affine
symplectic fibration}
In this section, we focus on the affine case. In brief, we present an
affine model for an $n$-shifted exact symplectic fibration
$\spec\beta: \spec A \longrightarrow (\spec B, \sigma_B, \cL_{\spec
B}[n])$  over an $n$-shifted contact base, induced from a submersion
$\beta:B\rightarrow A$ of standard form cdgas. 

It should be noted that $(\spec B,
\sigma_B)$ will be taken in contact Darboux form (Theorem \ref{contact
darboux}), so the contact line bundle will  be trivialized, $\cL_{\spec B}
\simeq \cO_{\spec B}$; accordingly, all forms in this subsection are thus
untwisted, and Theorem \ref{thm: proof_thurston in dcg} is applied
with respect to this trivialization. Furthermore, by
choosing a natural 1-form on $\spec A$ that satisfies the hypothesis
of Theorem \ref{thm: proof_thurston in dcg}, we will
explicitly describe
the induced shifted contact structure on the source $\spec A$.

\paragraph{Step-1: Choosing $(\spec B, \sigma_B)$ in contact Darboux
form.} Assume that $n<0$ is an odd integer; say, $n=-2\ell-1$ for $\ell \in \N$.
Suppose $(B, \sigma_B)$ is in $n$-shifted contact Darboux form.
Briefly, we assume that $B$ is a (minimal) standard form cdga given
as a free algebra over a smooth $\K$-algebra $B(0)$ with coordinates
$x_1^0, \dots, x_{m_0}^0$, generated by the variables $\tilde{z}^n \in B^n$, and
$x^{-i}_j, y^{n+i}_j \in B$, where
\begin{align}
  & x_1^{-i}, \dots, x_{m_i}^{-i}& &\text{ in degree } -i
  \ \ \ \text{ for } i= 1,  \dots, \ell, \label{var set1_base} \\
  & y_1^{n+i}, \dots, y_{m_i}^{n+i}& & \text{ in degree } n+i
  \ \text{ for } i=0,\dots, \ell, \label{var set2_base}
\end{align}
where $m_1,\dots, m_{\ell}$ are non-negative integers. The
\emph{internal differential} $d_B$ on $B$ is determined by the
Hamiltonian $H\in B^{n+1}$ such that $d_B\tilde{z}^n \text{ is proportional to } H + d[\cdots]$ similar to (\ref{defn_internal d contact}).
Furthermore, $\sigma_B:=(\sigma^0, 0,  \dots)$, with
$  \sigma^0= d_{dR}\tilde{z}^n + \sum_{i,j} y_j^{n+i}
d_{dR}x_j^{-i},$ is an $n$-shifted contact form on $\spec B$ in
Darboux form. Since $d_B \sigma^0 = 0$, its de Rham differential is
$$d_{tot}\sigma^0 = d_{dR}\sigma^0 = \sum_{i,j} d_{dR}y_j^{n+i} d_{dR}x_j^{-i}.$$

\paragraph{Step-2: Setting the source $\spec A$.} Let $A^0:=A(0)$ be
a smooth algebra of dimension $m_0+n_0$, and let
$\beta^0:B^0\rightarrow A^0$ be a smooth morphism. Assume there exist
elements $u^0_1, \dots, u^0_{n_0}$ in $A^0$ such that
$\{d_{dR}\tx_{1}^0, \dots, d_{dR}\tx^0_{m_0}, d_{dR}u^0_1, \dots,
d_{dR}u^0_{n_0} \}$ is a basis over $A^0$ for $\Omega_{A^0}^1$, with
$\tx^0_j=\beta^0(x^0_j)$.

Letting $s=\ell$, we define the \emph{commutative graded algebra} $A$
to be the free graded algebra over $A(0)$ generated by the variables:
\begin{align}
  & \tx_1^{-i}, \dots, \tx_{m_i}^{-i} & & \text{ in degree } (-i)
  \text{ for } i= 1, \dots, \ell, \nonumber \\
  & \ty_1^{n+i}, \dots, \ty_{m_i}^{n+i} & & \text{ in degree } (n+i)
  \text{ for } i= 0, \dots, \ell, \nonumber \\
  & \hat{z}^n & & \text{ in degree } n, \nonumber \\
  & u_1^{-i}, \dots, u_{n_i}^{-i} & & \text{ in degree } (-i) \text{
  for } i=1, \dots, s, \nonumber \\
  & v_1^{n+i}, \dots, v_{n_i}^{n+i} & & \text{ in degree } (n+i)
  \text{ for } i=0,\dots, s.
\end{align}

Next, choose an element $G \in A^{n+1}$ (the \emph{superpotential})
depending only on the variables $u$ and $v$, satisfying the classical
master equation. We define the \emph{differential $d_A$ on $A$} via:
\begin{align*}
  &d_A|_{A^0}=0, \quad d_A\tx_j^{-i}= \beta(d_B x_j^{-i}), \quad
  d_A\ty_j^{n+i}= \beta(d_B y_j^{n+i}), \quad d_A\hat{z}^n =
  \beta(d_B \tilde{z}^n), \\
  & d_Au_j^{-i}= \partial G/ \partial v_j^{n+i}, \quad d_Av_j^{n+i}=
  \pm \partial G/ \partial u_j^{-i}.
\end{align*}
Here we extended $\beta^0$ to a graded algebra morphism
$\beta:B\rightarrow A$ by setting $\beta(x_j^{-i}) = \tx_j^{-i}$,
$\beta(y_j^{n+i}) = \ty_j^{n+i}$, and $\beta(\tilde{z}^n) = \hat{z}^n$.

\paragraph{Step-3: Setting the map from $\spec A$ to $\spec B$.}
The commutation relation $d_A \circ \beta = \beta \circ d_B$ holds
consistently on all generators by design, confirming that
$\beta:B\rightarrow A$ is a valid morphism of \emph{cdgas}.
Furthermore, $\beta$ acts as a submersion, which yields the desired
morphism $$\spec\beta: \spec A \longrightarrow (\spec B, \sigma_B)$$
of derived affine schemes over an $n$-shifted contact base.

\paragraph{Step-4: Defining a relative $n$-shifted exact symplectic
structure on $\spec \beta.$} Note that in the relative setting, $\dR
\tx_j^{-i}=0$, $\dR \ty_j^{n+i}=0$, and $\dR \hat{z}^n=0$ in
$\Omega^1_{A/B}$ because they belong to the image of $\beta.$ We
construct a relative exact 1-form $\lambda_{rel} = \sum_{i,j}
v_j^{n+i} \dR u_j^{-i}$. Let $\gamma $ be the relative $n$-shifted
2-form on $\spec \beta$ defined by \[ \gamma= d_{dR}\lambda_{rel} =
  \sum_{i,j} \dR v_j^{n+i} \dR u_j^{-i} \in \displaystyle
\bigwedge\nolimits^2 \Omega^1_{A/B}[n]. \]
Clearly, $d_{dR} \gamma=0$. Furthermore, exactly as in the symplectic
case, the classical master equation for $G$ implies $d_A\gamma = 0$.
Since $\gamma^{\flat}$ acts isomorphically on the vertical tangent
vectors, $\gamma$ defines a non-degenerate relative $n$-shifted exact
symplectic structure on $\spec \beta$.

\paragraph{Step-5: Choosing a suitable $\Omega \in
\mathcal{A}^{1}(\spec A, n).$} Recall the null-homotopic sequence of complexes:
\[DR(B) \xrightarrow[]{(\spec \beta)^*} DR(A) \xrightarrow[]{\cdot/B}
DR(A/B).\] We wish to define an absolute $n$-shifted 1-form
$\Omega:=(\Omega^0, 0,\dots)$ on $\spec A$ such that the image of
$d_{dR}\Omega$ under the map $\cdot /B$ is homotopic to $\gamma.$
Using the variables above, we set:
\[ \Omega^0 := \sum_{i,j} v_j^{n+i} \dR u_j^{-i} + \sum_{i,j}
\ty_j^{n+i} \dR \tx_j^{-i} + \dR\hat{z}^n. \]
Notice that $d_{dR}\Omega^0 - \gamma = \sum_{i,j} \dR \ty_j^{n+i} \dR
\tx_j^{-i}$, and we compute:
\begin{align*}
  d_{dR}\Omega^0 - \gamma &= \beta_* \bigg(\sum_{i,j} \dR y^{n+i}_j
  \dR x^{-i}_j\bigg) = (\spec \beta)^* (d_{dR}\sigma^0).
\end{align*}
Since the composite $DR(B) \rightarrow DR(A) \rightarrow DR(A/B)$ is
null-homotopic, the difference $d_{dR}\Omega^0 - \gamma$ maps to zero
in $DR(A/B)$, so $d_{dR}\Omega$ satisfies the fiberwise compatibility
condition.

\paragraph{Step-6: Constructing an $n$-shifted contact structure on
$\spec A.$} Using the element $\Omega$ introduced above, we simply
define the candidate absolute 1-form \[ \alpha_A:= \Omega^0 + c \cdot
(\spec \beta)^* (\sigma^0) \quad \text{ for a generic scalar } c\in \K.\]

Let $\omega_A = d_{tot}\alpha_A = d_{tot}\Omega^0 + c \cdot
(\spec\beta)^*(d_{tot}\sigma^0)$. To verify that $\alpha_A$ defines a
valid contact structure, we must evaluate the non-degeneracy
condition on the contact distribution $\mathcal{K}_A$. By
construction, there are no cross-terms in $\omega_A$ between the
relative vertical variables (the $u$ and $v$ generators) and the
horizontal variables (the $\tx$, $\ty$, and $\hat{z}$ generators).
Consequently, the contact distribution evaluation splits explicitly
as a block-diagonal matrix over the decomposition $$\mathcal{K}_A
\simeq (\spec \beta)^* (\mathcal{K}_B) \oplus \mathbb{T}_{A/B}.$$
Specifically, its block-diagonal components are exactly
$\gamma^\flat$ and $(1+c) \cdot (\spec
\beta)^*(d_{dR}\sigma^0)^\flat$. Since $\gamma$ is a relative
$n$-shifted exact symplectic structure (making $\gamma^\flat$ an
equivalence vertically) and $\sigma_B$ is an $n$-shifted contact
structure on the base (making $d_{dR}\sigma^0$ an equivalence
horizontally on $\mathcal{K}_B$), their induced block morphisms are
independently global equivalences. Therefore, the combined block
matrix is globally invertible on $\mathcal{K}_A$ for any
constant $c \in \K\setminus\{-1\}$. This confirms that
$d_{tot}\alpha_A$ is non-degenerate on the contact distribution,
thereby establishing $\alpha_A$ as an absolute $n$-shifted contact
structure on $\spec A$.\qed

%===================================
%===================================
%===================================
\section{Concluding remarks}

The prequel work \cite{abi} extended the concept of symplectic
fibration to the setting of derived symplectic geometry with the help
of relative derived structures, where the $n$-shifted relative
symplectic structures serve as a substitute for classical symplectic
fibrations. \cite{abi} established two key theorems: The first gives
rise to the derived analogue of the \textit{induced compatible
symplectic fibration lemma}, while the second, called the
\textit{derived (symplectic) Thurston theorem}, provides a method for
constructing a compatible (absolute) shifted symplectic structure on
the source $X$ of a symplectic fibration \( \pi: X \rightarrow
(S,\omega_S) \),  with a shifted symplectic target.

In this paper, on the other hand, we extend a prior work of Arıkan
\cite{mfa} --the classical contact Thurston framework-- to the
context of derived contact geometry in the spirit of our prior work
\cite{abi} on the derived symplectic Thurston theorem. Our
formulation relies on the natural symplectic–contact dictionary,
combined with the approach of \cite{abi}.
%Inspired by a prior work of Arıkan \cite{mfa}
In that respect, we consider exact symplectic fibrations, and it has
been shown in this paper that our relative-to-absolute construction
formalism is naturally applied to the case of \textit{contact
symplectic fibrations}, in which one considers exact symplectic
fibrations with a contact base. %and extends the contact Thurston
% theorem of \cite{mfa} to the derived setting as well.
%It has been shown in \cite{mfa} that Thurston's theorem can be
% adapted to the framework of such fibrations, allowing the
% construction of a \textit{compatible} contact structure on the source space.

In brief, as achieved in Section \ref{sec:contact_symp_fib}, this
paper successfully extends the main results of \cite{mfa} to the
setting of {derived contact geometry}, completely addressing the
question of establishing a derived contact analogue of Thurston's construction.

As an application, we combine, under certain conditions, Theorem
\ref{thm: proof_thurston in dcg} with particular examples of exact
symplectic fibrations to produce relative-to-absolute constructions
(cf. Theorem \ref{cor: app to Thm1}). We in fact explicitly resolved
the problem of this relative-to-absolute mechanism %constructing
% exact derived contact symplectic fibrations (cf. Section
% \ref{sec:contact_symp_fib}) %and $G$-equivariant shifted symplectic
% fibrations (cf. Section \ref{sec:g_equiv_fib})
by integrating our framework with the formalisms of derived
symplectification and transversality presented in
\cite{IzbudakBerktav2026}. Finally, Section \ref{sec:affine
symplectic fibration} examines affine shifted exact symplectic
fibrations with contact base and constructs a natural affine contact
model in accordance with  Theorem \ref{thm: proof_thurston in dcg}.

\paragraph{Outlook: $G$-equivariant derived fibrations.} Our program of fully
extending the theory of fibrations to the derived context still
requires further investigation. In classical (symplectic-contact)
theory, fibrations are naturally assumed to be locally trivializable
such that the corresponding structure group for the transition maps
is also part of the fibration data. For the case of symplectic
fibrations, in particular, all transition maps are in fact
symplectomorphisms of the fiber; whence, the structure group $G$ happens to be a subset of the symplectic group.  To capture this
structure-group data and the local triviality property, the sequel
will aim to develop \textit{$G$-equivariant derived fibrations} incorporating the theory of $G$-equivariant derived
symplectic geometry.
%===================================
%===================================
%===================================
\appendix
\section{Background from derived algebraic geometry}\label{appendix_DAG}
\subsection{Derived schemes and Artin stacks}

In derived geometry, there  exists an appropriate concept of a
\emph{spectrum functor} described as the right adjoint to the global
algebra of functions functor
\begin{equation*}
  \Gamma: dSt_{\K} \leftrightarrows (cdga_{\K}^{\leq 0})^{\mathrm{op}}: \spec,
\end{equation*} where $dSt_{\K}$ denotes the \bfem{$\infty$-category
of derived stacks}, with objects being \emph{$\infty$-sheaves on the
site $(dAff)^{\mathrm{op}}:= cdga_{\mathbb{K}}^{\leq 0}$}. We
formally denote elements in this opposite category $
(cdga_{\mathbb{K}}^{\leq 0})^{\mathrm{op}} $ by $\spec A$. A  derived
stack $X$ is in fact an $\infty$-groupoid-valued homotopy sheaf. More
precisely, we have:

\begin{definition}
  A \bfem{derived stack} $ {X} $ is  a  functor  $$ {X}:
  cdga_{\K}^{\leq 0} \rightarrow  Grpd_{\infty}, \ \ A\mapsto X(A)
  \simeq Map_{dStk_{\mathbb{K}}}(\spec A, {X}),$$ satisfying a
  descent condition.
  For more details, we refer to \cite{ToenHAG}.
\end{definition}

Let us now focus on particular, more tractable derived stacks with
good properties.

\begin{definition}
  An object ${X}$ in $dSt_{\mathbb{K}}$ is called an \bfem{affine
  derived $\mathbb{K}$-scheme} if  $X\simeq \spec A $ for some cdga
  $A \in cdga_{\K}^{\leq 0}$. An object $ X$ in $dSt_{\mathbb{K}}$ is
  then called a \bfem{derived $\mathbb{K}$-scheme} if it can be
  covered by Zariski open affine derived $\mathbb{K}$-schemes.
\end{definition}
Note that any affine derived $\mathbb{K}$-scheme $X$ can be
corepresented by a cdga $A$ such that $\spec A$ is viewed as the
functor \[ \spec A: B  \longmapsto Hom_{cdga_{\mathbb{K}}^{\leq 0}} (A,B).\]
Denote by $dSch_{\K} \subset dSt_{\mathbb{K}}$ the full
\bfem{$\infty$-subcategory of  derived $\mathbb{K}$-schemes}, and we
simply write $dAff_{\K} \subset dSch_{\mathbb{K}}$ for the full
\bfem{$\infty$-subcategory of  affine derived $\mathbb{K}$-schemes.}

There is another useful type of derived stacks, which generalizes
derived schemes but still with reasonable and tractable properties:
the concept of \emph{derived Artin $\K$-stack}.
From the foundational framework of To\"en and Vezzosi \cite{ToenHAG},
the concept of \textit{geometricity} for derived stacks is needed and
defined inductively relative to a chosen class of morphisms:

\begin{definition}[{\cite[Definition 1.3.3.1]{ToenHAG}}] \label{def:n_geometric}
  \begin{enumerate}
    \item A derived stack $X$ is \bfem{$(-1)$-geometric} if it is
      representable (i.e., an affine derived $\K$-scheme).
    \item A morphism of derived stacks $f: X \rightarrow Y$ is
      \bfem{$(-1)$-representable} if for any affine derived
      $\K$-scheme $U$ and any morphism $U \rightarrow Y$, the
      homotopy pullback $X \times^h_Y U$ is an affine derived $\K$-scheme.
  \end{enumerate}
  For $m \ge 0$, we define inductively:
  \begin{enumerate}
      \setcounter{enumi}{2}
    \item A derived stack $X$ is \bfem{$m$-geometric} if the diagonal
      morphism $X \rightarrow X \times^h X$ is $(m-1)$-representable,
      and $X$ admits an \bfem{$m$-atlas} (a smooth surjective
        morphism $\coprod U_i \rightarrow X$ from a disjoint union of
        affine derived schemes such that each $U_i \rightarrow X$ is
      $(m-1)$-representable).
    \item A morphism $f: X \rightarrow Y$ is \bfem{$m$-representable}
      if for any affine derived scheme $U \rightarrow Y$, the
      homotopy pullback $X \times^h_Y U$ is an $m$-geometric stack.
  \end{enumerate}
\end{definition}

\begin{definition}\cite[Definition 1.3.3.1]{ToenHAG}
  An object $X \in dSt_{\K}$ is called a \bfem{derived Artin
  $\K$-stack} if it is $m$-geometric for some integer $m \geq 0$, and
  the underlying classical stack is 1-truncated.
\end{definition}

The upshot is that any such object $ X$ of $ dSt_{\K}$ comes with a
smooth surjective morphism  $\varphi: U \rightarrow  X$ with $U$ a
derived $\K$-scheme. We call such morphism an \bfem{atlas.}

\subsection{Perfect complexes and key properties}

Recall that we define the \bfem{algebra of functions} on $X \in dSt_{\K}$ by
\begin{equation}\label{defn_O(X) as inf limit}
  \mathcal{O}(X)= \displaystyle \lim_{A \in cdga_{\K}^{\leq 0},
  \ \spec A \rightarrow X} A.
\end{equation} Likewise,  the \bfem{$\infty$-category of
quasi-coherent sheaves on $X$} is defined to be
\begin{equation}\label{defn_QCoh(X) as inf limit}
  {QCoh}(X)= \displaystyle \lim_{A \in cdga_{\K}^{\leq 0}, \ \spec A
  \rightarrow X} {{\rm Mod}_A},
\end{equation}
where ${\rm Mod}_A$ denotes the \bfem{$\infty$-category of of (unbounded)
dg $A$-modules}\footnote{The homotopy category of ${\rm Mod}_A$ is equivalent to the unbounded derived
category of $A$-modules.}. Within the $\infty$-category of quasi-coherent sheaves $QCoh(X)$, a
particularly well-behaved class is that of \emph{perfect complexes}.
The properties of perfect modules and relative cotangent complexes
were established in the homotopical algebraic geometry framework of
To\"en and Vezzosi \cite{ToenHAG}. We extract and recall the relevant
statements here to support our main proofs in the text.

An $A$-module $M$ over a derived affine scheme $\spec A$ is called
\bfem{perfect} if it is strongly dualizable. Perfect modules satisfy
several important finiteness and descent properties:

\begin{theorem} \label{thm:HAG_perfect_properties} \emph{(Properties
  of Perfect Modules)}
  Let $X$ be a derived Artin $\K$-stack locally of finite
  presentation and $\mathcal{E}$ an object in $QCoh(X)$.
  \begin{enumerate}
    \item \label{item:perfect_fp} \emph{(Finitely presented vs.
      Perfect)} In the stable $\infty$-category of quasi-coherent
      modules, finitely presented objects are exactly the perfect
      complexes, and equivalently the retracts of finite cell
      objects \emph{\cite[Corollary 1.2.3.8]{ToenHAG}}. The last
      description makes perfect complexes visibly stable under
      shifts, retracts, fibers and cofibers; they are also stable
      under tensor products \emph{\cite[Proposition
      1.2.3.7]{ToenHAG}}.
    \item \label{item:perfect_stack} \emph{(Stack property)} The
      presheaf assigning to each derived affine scheme its
      $\infty$-category of perfect modules satisfies descent and
      forms a stack over the smooth (or \'{e}tale) topology
      \emph{\cite[Corollary 1.3.7.4]{ToenHAG}}.
    \item \label{item:perfect_conservative} \emph{(Conservativity)}
      For a covering family $\{u_i : \spec A_i \rightarrow \spec
      A\}_{i \in I}$, the family of base change functors
      $\{\mathbb{L}u_i^*\}_{i \in I}$ is conservative
      \emph{\cite[Corollary 1.3.2.7]{ToenHAG}}.
    \item \label{item:perfect_pointwise} \emph{(Conservativity on
      points)} A morphism $\phi: \mathcal{E} \rightarrow \mathcal{F}$
      of perfect complexes on $X$ is an equivalence if and only if
      $x^*\phi$ is an equivalence for every geometric point $x: \spec
      \K \rightarrow X$.
    \item \label{item:perfect_vanishing} \emph{(Vanishing locus)} If
      $\mathcal{E}$ is perfect on $X$, the locus of points at which
      the derived fiber of $\mathcal{E}$ vanishes is a Zariski-open
      derived substack $U \subseteq X$, and $\mathcal{E}|_U \simeq
      0$.
  \end{enumerate}
\end{theorem}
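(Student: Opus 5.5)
The first three items are foundational results of Toën--Vezzosi, and I would simply quote them: \cite[Corollary 1.2.3.8, Proposition 1.2.3.7]{ToenHAG} for (\ref{item:perfect_fp}), \cite[Corollary 1.3.7.4]{ToenHAG} for (\ref{item:perfect_stack}) and \cite[Corollary 1.3.2.7]{ToenHAG} for (\ref{item:perfect_conservative}). The stability claims in (\ref{item:perfect_fp}) follow from the retract-of-finite-cell description, since that class is closed under shifts, cones and retracts. The genuine content is in (\ref{item:perfect_pointwise}) and (\ref{item:perfect_vanishing}). I would prove both by reducing to the affine case along a smooth atlas and then applying a derived Nakayama argument to a perfect module over a cdga.

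\emph{Reduction to the affine case.} Choose a smooth atlas $\varphi\colon U=\coprod_i \spec A_i \to X$. Each $A_i$ may be taken of finite presentation, because $X$ is locally finitely presented. For (\ref{item:perfect_pointwise}), replace $\phi$ by its cone $\mathcal{C}$, which is perfect by (\ref{item:perfect_fp}). By (\ref{item:perfect_stack}) and (\ref{item:perfect_conservative}), $\mathcal{C}\simeq 0$ if and only if $\varphi^*\mathcal{C}\simeq 0$ on every $\spec A_i$. Since $\varphi$ is surjective on geometric points, the pointwise hypothesis on $X$ transfers to $U$. For (\ref{item:perfect_vanishing}), smooth morphisms are open, and the fiberwise vanishing condition is compatible with base change: $u^*\varphi^*\mathcal{E}\simeq (\varphi u)^*\mathcal{E}$ for $u$ a geometric point of $U$. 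Hence the vanishing locus on $X$ is the image of the vanishing locus on $U$, and its preimage is exactly that locus. It therefore suffices to treat $X=\spec A$ with $C$ a perfect $A$-module.

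\emph{Affine case via derived Nakayama.} Since $C$ is perfect, it is cohomologically bounded, and each $H^i(C)$ is a finitely generated $H^0(A)$-module. Suppose $C\neq 0$, and let $k$ be the top degree with $H^k(C)\neq 0$. Right exactness of the top cohomology gives
\[
H^k(C\otimes_A \kappa(x)) \simeq H^k(C)\otimes_{H^0(A)}\kappa(x).
\]
Choose a closed point $x$ in the support of the finitely generated module $H^k(C)$. Classical Nakayama makes the right-hand side nonzero, so $C\otimes_A\kappa(x)\neq 0$. Because $H^0(A)$ is of finite type over the algebraically closed field $\K$, this ring is Jacobson and the closed points are exactly the $\K$-points. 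This proves (\ref{item:perfect_pointwise}). For (\ref{item:perfect_vanishing}), apply the same argument to the localization $C_{\mathfrak p}$ over the local ring $H^0(A)_{\mathfrak p}$. Vanishing of the derived fiber at $\mathfrak p$ forces $C_{\mathfrak p}\simeq 0$. The finitely many nonzero $H^i(C)$ are finitely generated, so they vanish on a neighbourhood $D(f)$ of $\mathfrak p$. Thus the vanishing locus is open, $C|_{D(f)}\simeq 0$, and the open sets glue to a Zariski-open derived substack $U$ with $\mathcal{E}|_U\simeq 0$.

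\emph{Main obstacle.} The delicate step is the top-degree Nakayama argument. One must check that base change to $\kappa(x)$ genuinely computes the top cohomology, which is exactly where boundedness of a perfect complex is essential: for unbounded complexes the argument fails. One must also ensure that ``geometric points'' suffice, which is why the Jacobson property of finite-type $\K$-algebras is needed. The descent step is the other point requiring care: the fiber of $\varphi^*\mathcal{C}$ at a point of $U$ must agree with the fiber of $\mathcal{C}$ at its image. I would verify this directly from the base-change compatibility of derived pullback.
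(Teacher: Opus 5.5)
Your overall strategy is the paper's: quote the first three items, reduce along a smooth atlas, and run a top-degree Nakayama argument using that $H^0(A)$ is a Jacobson finite-type $\K$-algebra. There is, however, a genuine gap in the affine step.

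\textbf{The false claim.} You assert that a perfect $A$-module $C$ is cohomologically bounded, with finitely many nonzero $H^i(C)$. This is false when $A$ is not discrete. The module $A$ itself is perfect, and a finitely presented connective cdga can have infinitely many nonzero cohomology groups. For example, take the free cdga $\K[x]$ on one generator of degree $-2$ with zero differential; it has $H^{-2k}=\K x^k$ for every $k$. Another example is the derived self-intersection $\K\otimes^{\mathbb{L}}_{\K[\varepsilon]/(\varepsilon^2)}\K$. Perfect $A$-modules are only bounded \emph{above}, being retracts of finite cell modules built from the $A[n]$.

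\textbf{Where it matters.} For item (\ref{item:perfect_pointwise}) this is harmless. Your argument only needs a top degree $k$ and finite generation of $H^k(C)$, and the latter follows from $H^k(C)\cong H^k(C\otimes^{\mathbb{L}}_A H^0(A))$ together with perfectness over the noetherian ring $H^0(A)$. Item (\ref{item:perfect_vanishing}), however, really uses the false claim. From $C_{\mathfrak p}\simeq 0$ you pass to vanishing on a single $D(f)$ because ``the finitely many nonzero $H^i(C)$'' each vanish near $\mathfrak p$. With infinitely many nonzero cohomology modules you only get infinitely many neighbourhoods $D(f_i)$ and no common one. So neither the openness of the vanishing locus nor $\mathcal{E}|_U\simeq 0$ follows as written.

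\textbf{How the paper avoids this.} It base changes to the discrete noetherian ring $R=H^0(A)$ and sets $P:=C\otimes^{\mathbb{L}}_A R$.
\begin{itemize}
\item $P$ is a perfect $R$-module, hence genuinely bounded with finitely generated cohomology.
\item The derived fibers of $P$ coincide with those of $C$.
\item Your top-degree Nakayama argument, applied to $P_x$, shows that the non-vanishing locus is exactly $\mathrm{Supp}(P)$, a finite union of closed sets.
\item One then returns to $C$: if $P_f\simeq 0$ then $C_f\simeq 0$. Indeed, for the bounded-above module $C_f$ and the connective map $A_f\to R_f=H^0(A_f)$, the top nonzero cohomology of $C_f$ equals that of $P_f$.
\end{itemize}
An alternative repair uses the identity. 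If $C_{\mathfrak p}\simeq 0$, then $\mathrm{id}_C$ maps to zero in $H^0\mathrm{End}_A(C)_{\mathfrak p}\cong H^0\mathrm{End}_{A_{\mathfrak p}}(C_{\mathfrak p})$. Hence it is killed by some $f\notin\mathfrak p$, which forces $C_f\simeq 0$.

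With either repair, your reduction along the open surjective atlas goes through. Two smaller corrections:
\begin{itemize}
\item In your ``main obstacle'' paragraph, the essential property is boundedness \emph{above}, not boundedness.
\item Transferring the pointwise hypothesis to $U$ uses only that $\varphi\circ u$ is a geometric point of $X$. Surjectivity of $\varphi$ is what the conservativity step needs.
\end{itemize}
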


\begin{proof}
  Items (\ref{item:perfect_fp})--(\ref{item:perfect_conservative})
  are the cited results. We first prove an affine statement, from
  which (\ref{item:perfect_pointwise}) and
  (\ref{item:perfect_vanishing}) are then deduced.

  \emph{Claim.} Let $A$ be a connective cdga, finitely presented over
  $\K$, let $R := H^0(A)$, and let $K$ be a perfect $A$-dg-module.
  Then the set
  \[ Z := \{\, x \in \spec R \ : \ K \otimes_A^{\mathbb{L}} \kappa(x)
  \not\simeq 0 \,\} \]
  is Zariski closed, and $K \simeq 0$ if and only if $Z$ contains no
  closed point.

  The ring $R$ is a finitely generated $\K$-algebra
  \emph{\cite[Proposition 2.2.2.4]{ToenHAG}}; in particular it is
  noetherian and Jacobson, and by the Nullstellensatz every closed
  point of $\spec R$ has residue field $\K$.

  Set $P := K \otimes_A^{\mathbb{L}} R$. Since every residue field
  $\kappa(x)$ is an $R$-algebra,
  \[ K \otimes_A^{\mathbb{L}} \kappa(x) \simeq P
    \otimes_R^{\mathbb{L}} \kappa(x) \qquad \text{for all } x \in \spec
  R. \]
  By (\ref{item:perfect_fp}) the module $K$ is a retract of a finite
  cell $A$-module $C$, so $P$ is a retract of the finite cell
  $R$-module $C \otimes_A^{\mathbb{L}} R$. Over the discrete
  noetherian ring $R$, an induction on the cells through the long
  exact sequences shows that a finite cell module has only finitely
  many non-zero cohomology modules, each finitely generated; both
  properties pass to retracts, since $H^i(P)$ is a direct summand of
  $H^i(C \otimes_A^{\mathbb{L}} R)$. Consequently
  \[ \mathrm{Supp}(P) := \bigcup_i \mathrm{Supp}_R\, H^i(P) \]
  is a finite union of closed subsets, hence closed.

  We show $Z = \mathrm{Supp}(P)$. If $x \notin \mathrm{Supp}(P)$,
  then $H^i(P_x) = H^i(P)_x = 0$ for all $i$, so $P_x \simeq 0$ and
  therefore $P \otimes_R^{\mathbb{L}} \kappa(x) \simeq P_x
  \otimes_{R_x}^{\mathbb{L}} \kappa(x) \simeq 0$. Conversely, let $x
  \in \mathrm{Supp}(P)$ and let $m$ be the largest integer with
  $H^m(P)_x \neq 0$, which exists because only finitely many $H^i(P)$
  are non-zero. The truncation triangle
  \[ \tau^{\leq m-1} P_x \longrightarrow P_x \longrightarrow
  H^m(P)_x[-m] \]
  base changes along $R_x \rightarrow \kappa(x)$ to a triangle whose
  first term has vanishing cohomology in degrees $\geq m$, derived
  base change preserving the subcategories $D^{\leq c}$, and whose
  third term has
  \[ H^m\big(H^m(P)_x[-m] \otimes_{R_x}^{\mathbb{L}} \kappa(x)\big)
  \simeq H^m(P)_x \otimes_{R_x} \kappa(x), \]
  the degree-zero cohomology of the derived tensor product of a
  discrete module being the ordinary tensor product. The long exact
  sequence therefore gives
  \[ H^m\big(P \otimes_R^{\mathbb{L}} \kappa(x)\big) \simeq H^m(P)_x
  \otimes_{R_x} \kappa(x), \]
  which is non-zero by Nakayama's lemma, $H^m(P)_x$ being a non-zero
  finitely generated module over the noetherian local ring $R_x$.
  Thus $x \in Z$, and $Z = \mathrm{Supp}(P)$ is closed.

  Since $R$ is Jacobson, the closed set $Z$ is empty as soon as it
  contains no closed point; in that case every $H^i(P)$ has empty
  support, so $P \simeq 0$. It remains to check that $P \simeq 0$
  forces $K \simeq 0$. Each cell $A[n]$ of $C$ has vanishing
  cohomology above degree $-n$, so $K$, being a retract of $C$, is
  cohomologically bounded above. If $K \not\simeq 0$, let $m$ be the
  largest integer with $H^m(K) \neq 0$. Applying $- \otimes_A^{\mathbb{L}} R$
  to the truncation triangle $\tau^{\leq m-1} K \rightarrow K
  \rightarrow H^m(K)[-m]$ and arguing exactly as above, now along the
  connective morphism $A \rightarrow R$, gives
  \[ H^m(P) \simeq H^m(K) \otimes_{H^0(A)} R = H^m(K) \neq 0, \]
  a contradiction. This proves the Claim; the converse direction of
  its last assertion is immediate, since $Z = \emptyset$ when $K
  \simeq 0$.

  For (\ref{item:perfect_pointwise}), replace $\phi$ by $K :=
  \mathrm{cone}(\phi)$, which is perfect by (\ref{item:perfect_fp})
  and commutes with pullback, so that $\phi$ is an equivalence if and
  only if $K \simeq 0$. Choose a smooth atlas $\{u_i: \spec A_i
  \rightarrow X\}_i$; each $A_i$ is finitely presented over $\K$,
  smooth morphisms being locally of finite presentation and $X$ being
  locally of finite presentation over $\K$. A geometric point of
  $\spec A_i$ is the same as a $\K$-point of $H^0(A_i)$, i.e. a
  closed point of $\spec H^0(A_i)$, and it maps to a geometric point
  of $X$. The hypothesis therefore says that no closed point of
  $\spec H^0(A_i)$ lies in the set $Z$ of the Claim for $u_i^* K$,
  whence $u_i^* K \simeq 0$ for every $i$; by descent
  (\ref{item:perfect_stack}) and conservativity
  (\ref{item:perfect_conservative}) along the atlas, $K \simeq 0$.

  For (\ref{item:perfect_vanishing}), note that for a field extension
  $\kappa(x) \hookrightarrow L$ one has $\mathcal{E}
  \otimes^{\mathbb{L}} L \simeq (\mathcal{E} \otimes^{\mathbb{L}}
  \kappa(x)) \otimes_{\kappa(x)} L$, which vanishes if and only if
  $\mathcal{E} \otimes^{\mathbb{L}} \kappa(x)$ does. The condition
  defining $Z$ is therefore independent of the representative of a
  point and compatible with arbitrary base change, so the closed
  subsets $Z_i \subseteq \spec H^0(A_i)$ given by the Claim are
  compatible on overlaps and define a closed subset of the underlying
  topological space of $X$; its complement defines the open derived
  substack $U \subseteq X$. Every derived fiber of $\mathcal{E}|_U$
  vanishes, so applying the Claim on a smooth affine atlas of $U$,
  which is again locally of finite presentation over $\K$, gives
  $\mathcal{E}|_U \simeq 0$.
\end{proof}

\subsection{Cotangent and de Rham complexes}
\paragraph{The relative cotangent complex.} The deformation theory of
derived stacks and the definition of shifted differential forms rely
on the \textit{relative cotangent complex}. For a given morphism $A
\rightarrow B$ of cdgas, the \bfem{relative cotangent complex}
$\mathbb{L}_{B/A} \in \mathrm{Ho}(B-\mathrm{Mod})$ universally
corepresents the space of derived derivations. Globalizing this
concept, any morphism of derived Artin stacks $f: X \rightarrow Y$
possesses a relative cotangent complex $\mathbb{L}_{X/Y} \in QCoh(X)$.
This construction satisfies several fundamental properties:

\begin{theorem} \label{thm:hag_cotangent_properties}
  \emph{(Properties of Cotangent Complexes, \cite{ToenHAG})}
  \begin{enumerate}
    \item \label{item:cotangent_fp} \bfem{Finite presentation}
      \emph{\cite[Proposition 2.2.2.4]{ToenHAG}}: If $X$ and $Y$ are
      locally finitely presented derived Artin $\K$-stacks and
      $f:X\rightarrow Y$ is locally of finite presentation, then
      $\mathbb{L}_{X/Y}$ is a coherent object (almost perfect
      complex) in $QCoh(X)$. If, additionally, $X$ and $Y$ are smooth
      (or the morphism is perfect), then $\mathbb{L}_{X/Y}$ is perfect.
    \item \label{item:cotangent_transitivity} \bfem{Transitivity
      sequence} \emph{\cite[Lemma 1.4.1.16]{ToenHAG}}: For composable
      morphisms of derived stacks $X \xrightarrow{f} Y
      \xrightarrow{g} Z$, there is a canonical exact triangle
      (homotopy cofiber sequence) of perfect tangent complexes in $QCoh(X)$:
      \begin{equation}
        f^* \mathbb{L}_{Y/Z} \longrightarrow \mathbb{L}_{X/Z}
        \longrightarrow \mathbb{L}_{X/Y} \longrightarrow f^*
        \mathbb{L}_{Y/Z}[1].
      \end{equation}
    \item \label{item:cotangent_basechange} \bfem{Base change}
      \emph{\cite[Lemma 1.4.1.16]{ToenHAG}}: For any homotopy
      pullback square of derived stacks
      \begin{equation*}
        \begin{tikzcd}
          X' \arrow[r, "g'"] \arrow[d, "f'"'] & X \arrow[d, "f"] \\
          Y' \arrow[r, "g"'] & Y
        \end{tikzcd}
      \end{equation*}
      the canonical base change morphism $(g')^*\mathbb{L}_{X/Y}
      \longrightarrow \mathbb{L}_{X'/Y'}$ is an equivalence in
      $QCoh(X')$. In particular, the derived pullback of the relative
      cotangent complex $\mathbb{L}_{X/Y}$ to a geometric fiber $X_y$
      canonically identifies with the cotangent complex of the fiber:
      $\iota_y^*\mathbb{L}_{X/Y} \simeq \mathbb{L}_{X_y}$.
  \end{enumerate}
\end{theorem}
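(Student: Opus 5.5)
The plan is to treat the three items as imports from the homotopical algebraic geometry framework of To\"en--Vezzosi \cite{ToenHAG}. I will then supply only the short deductions needed to put them in the form used in the main text. These are the perfectness of the complexes in the transitivity triangle and the fiberwise identification $\iota_y^*\mathbb{L}_{X/Y}\simeq\mathbb{L}_{X_y}$.

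\emph{First}, I will recall that for a morphism $f\colon X\to Y$ of derived Artin stacks, $\mathbb{L}_{X/Y}$ is constructed on a smooth atlas and glued by descent (Theorem \ref{thm:HAG_perfect_properties}-(\ref{item:perfect_stack})). Each of the three properties therefore reduces to its affine counterpart for a morphism of cdgas $A\to B$:
\begin{itemize}
\item[(1)] finite presentation of $A\to B$ gives that $\mathbb{L}_{B/A}$ is almost perfect, and perfect in the smooth or perfect case;
\item[(2)] the transitivity cofiber sequence for $A\to B\to C$;
\item[(3)] the base change formula $\mathbb{L}_{B/A}\otimes_B^{\mathbb{L}}(B\otimes^{\mathbb{L}}_A A')\simeq\mathbb{L}_{B\otimes_A^{\mathbb{L}}A'/A'}$.
\end{itemize}
All three are the cited statements [Prop.~2.2.2.4, Lemma~1.4.1.16]. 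The compatibility of the affine triangles and base-change maps with smooth pullback is what allows them to descend to stacks. Conservativity along the atlas (Theorem \ref{thm:HAG_perfect_properties}-(\ref{item:perfect_conservative})) then shows that the globally defined base-change morphism is an equivalence, because it is one after pullback to every chart.

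\emph{Second}, for the word ``perfect'' in (2), I will use that the absolute cotangent complexes $\mathbb{L}_X$ and $\mathbb{L}_Y$ of the stacks in play are perfect. This is how (1) is applied throughout the paper, where the tangent complexes are dualized. Perfect complexes are stable under pullback, cofibers and shifts (Theorem \ref{thm:HAG_perfect_properties}-(\ref{item:perfect_fp})). The cofiber $\mathbb{L}_{X/Y}$ of $f^*\mathbb{L}_Y\to\mathbb{L}_X$ is therefore perfect, so all terms of the triangle are perfect and may be dualized to the tangent triangle (\ref{eq:tangent_triangle}).

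\emph{Third}, the fiber statement is obtained by applying (3) to the Cartesian square (\ref{cd:fiber_pullback}) with $Y'=\spec\K$ and $g=y$. This gives $\iota_y^*\mathbb{L}_{X/Y}\simeq\mathbb{L}_{X_y/\spec\K}$. Since $\mathbb{L}_{\spec\K}\simeq0$, the transitivity triangle (2) for $X_y\to\spec\K\to\spec\K$ identifies $\mathbb{L}_{X_y/\spec\K}$ with $\mathbb{L}_{X_y}$.

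I expect the main obstacle to be the second step, namely justifying perfectness rather than mere almost-perfectness. Local finite presentation over $\K$ by itself gives only almost perfect cotangent complexes. I will therefore state explicitly the standing assumption, implicit in the paper's use of $\mathbb{T}$, that the stacks involved have perfect cotangent complexes. Everything else is formal bookkeeping with the cited results.
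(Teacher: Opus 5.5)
Your plan is the paper's. The paper gives no argument for this theorem and simply imports all three items from HAG~II. Your derivation of $\iota_y^*\mathbb{L}_{X/Y}\simeq\mathbb{L}_{X_y}$ by applying (3) to the fiber square is the intended reading of the last clause; the final identification is definitional, since for $\K$-stacks $\mathbb{L}_{X_y}$ already means $\mathbb{L}_{X_y/\spec\K}$.

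Two of your supplementary steps are, however, wrong as written. In your first step, $\mathbb{L}_{X/Y}$ is not glued from cotangent complexes of chart maps. Along a smooth chart $u\colon\spec B\to X$ one only has a cofiber sequence $u^*\mathbb{L}_X\to\mathbb{L}_B\to\mathbb{L}_{B/X}$, and in general $\mathbb{L}_{B/X}\not\simeq 0$. For example, $u\colon\spec\K\to B\mathbb{G}_m$ gives $u^*\mathbb{L}_{B\mathbb{G}_m}\simeq\K[-1]$, although $\mathbb{L}_{\spec\K}\simeq 0$. Hence the transitivity triangles and base-change maps of cdga maps between charts are not restrictions of the stacky ones. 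You cannot descend them, nor test the global base-change map against the affine formula chart by chart. Simply drop this reduction: HAG~II, Lemma~1.4.1.16 is already a statement about stacks. It is proved from the universal property of $\mathbb{L}_{X/Y}$ as corepresenting relative derivations, and a homotopy pullback square identifies the relative derivation spaces of $X'/Y'$ and $X/Y$ at every point of $X'$.

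Second, your ``main obstacle'' is not one. In the homotopical sense of finite presentation used by HAG~II and by the paper, a finitely presented cdga is a retract of a finite cell cdga. Its cotangent complex is therefore a retract of a finite cell module, hence perfect. This is what HAG~II, Prop.~2.2.2.4 records; it is the reference for item (1), which understates it. Now feed this into the cofiber sequence above for a smooth atlas by finitely presented affines, whose relative terms $\mathbb{L}_{B/X}$ are perfect because the chart maps are smooth. This shows that every locally finitely presented derived Artin stack has perfect cotangent complex. So your extra hypothesis is automatic under the paper's standing conventions. In fact your first step, which glues via the stack property of \emph{perfect} modules, was already tacitly using it.

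Finally, your cofiber argument only treats $Z=\spec\K$. For the general triangle in (2), run it also on $g^*\mathbb{L}_Z\to\mathbb{L}_Y$ and $(g\circ f)^*\mathbb{L}_Z\to\mathbb{L}_X$ to get $\mathbb{L}_{Y/Z}$ and $\mathbb{L}_{X/Z}$ perfect.
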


\paragraph{The de Rham complex.} When $X=\spec A$ is a derived affine
scheme with $A$ a cofibrant cdga (e.g. standard form cdgas with the
property that $\mathbb{L}_A\simeq \Omega_A^1$), then its \emph{de
Rham complex} is given as $$DR(X)= DR(A)\simeq\mathrm{Sym}_A
(\Omega^1_A [-1]),$$ where the graded mixed differential is given by
the universal derivation $A\rightarrow \Omega^1_A$ extended via the
Leibniz rule. Then for $X$ an arbitrary stack, we define its \bfem{de
Rham complex} as
\begin{equation}\label{defn_DR(X) as inf limit}
  DR(X)= \lim_{A \in cdga, \ \spec A \rightarrow X} DR(A).
\end{equation}
Note that any derived stack $X$ has a graded mixed de Rham complex
$DR(X)$. If, in addition, $X$ is Artin and $\mathbb{L}_X$ denotes its
cotangent complex, then we have
\begin{equation}
  \mathcal{A}^p(X,n) \simeq \pi_0 Map_{QCoh(X)}(\mathcal{O}_X,
  \wedge^p\mathbb{L}_X[n]).
\end{equation}

\bibliographystyle{alpha}
\bibliography{refss}

\end{document}